\newtheorem{theorem}{Theorem}[section]
\newtheorem{definition}[theorem]{Definition}
\newtheorem{lemma}[theorem]{Lemma}
\newtheorem{corollary}[theorem]{Corollary}
\newtheorem{Remark}[theorem]{Remark}
\title[Matrices over an infinite integral domain]{Graded polynomial identities and central polynomials of matrices over an infinite
integral domain}
\author{Luís Felipe Gonçalves Fonseca}
\address{Departamento de Matem\'atica, Universidade Federal
de Viçosa - Campus Florestal, 35690-000 - Florestal}
\email{luisfelipe@ufv.br}
\begin{document}
\maketitle

\begin{abstract}
Let $K$ be an infinite integral domain and $M_{n}(K)$ be the algebra
of all $n\times n$ matrices over $K$. This paper aims for the
following goals:
\begin{itemize}
\item Find a basis for the graded identities for elementary grading in $M_{n}(K)$
when the neutral component and diagonal coincide; \item Describe the
$\mathbb{Z}_{p}$-graded central polynomials of $M_{p}(K)$ when $p$
is a prime number; \item Describe the $\mathbb{Z}$-graded central
polynomials of $M_{n}(K)$.
\end{itemize}
\end{abstract}

\section{Introduction}

Polynomial Identity theory (PI) is an important branch of the Ring
Theory. The first crucial developments in PI-theory were Kaplansky's
Theorem \cite{Kaplansky} about primitive PI-algebras and the
Amitsur-Levitsky Theorem \cite{Amitsur}, published in 1948 and 1950,
respectively. The latter theorem is important for describing the
polynomial identities of matrices.

Let $K$ be a field and $M_{n}(K)$ be the algebra of all $n\times n$
matrices over $K$. PI-theory is used to obtain a basis for
polynomial identities of $M_{n}(K)$. Razmyslov \cite{Razmyslov2}
discovered a nine-polynomial basis for the identities of $M_{2}(K)$
when $K$ is a field of characteristic zero. Some years later,
Drensky \cite{Drensky1} found a minimal polynomial basis: comprising
the Hall identity and the standard polynomial of degree $4$.
Koshlukov \cite{Plamen1} found a basis (consisting of four
identities) for the identities of $M_{2}(K)$ when $K$ is an infinite
field of $char K = p > 2$.

Despite these advances, the identities of $M_{2}(K)$ when $K$ is an
infinite field of characteristic $2$ or an infinite integral domain
remain unresolved.

Let $K$ be a field of characteristic zero. In 1950 Specht
\cite{Specht} conjectured that every system of identities in
associative algebra has a finite basis. Specht's conjecture was
unsolved until the late 1980s when Kemer demonstrated its truth
using the theory of $\mathbb{Z}_{2}$-graded algebras.

Another important problem is describing the graded polynomial
identities of $M_{n}(K)$. The $\mathbb{Z}_{2}$-graded polynomial
identities of $M_{2}(K)$ were described by Di Vincenzo
\cite{Vincenzo}, while Vasilovsky \cite{Vasilovsky 1} described the
$\mathbb{Z}_{n}$-graded polynomial identities of $M_{n}(K)$. A year
earlier, the same author had described the $\mathbb{Z}$-graded
identities of $M_{n}(K)$ \cite{Vasilovsky 2}. Vasilovsky's results
were extended by Bahturin and Drensky \cite{Drensky2}, who found the
basis of the graded identities for the elementary gradings on
$M_{n}(K)$ when the neutral component and diagonal of $M_{n}(K)$
coincide. Azevedo \cite{Azevedo1},\cite{Azevedo2} and Silva
\cite{Silva} extended the Vasilovsky's and Bahturin-Drensky's
results, respectively to an infinite field.

Kaplansky \cite{Kaplansky2} posed a list of open problems in Ring
Theory. Among these was the question does a non-trivial contral
polynomial exist for $M_{n}(K)$ when $n \geq 3$? This question was
answered by Formanek \cite{Formanek}, and independently by Razmyslov
\cite{Razmyslov3}.

Describing the central polynomials of $M_{n}(K)$ is a crucial task
in PI-theory. When $K$ is a field of characteristic zero, a set of
generators may be found for the central polynomials of $M_{2}(K)$
\cite{Okhitin}. Koshlukov and Colombo \cite{Plamen3} described the
central polynomials of $M_{2}(K)$, when $K$ is an infinite field of
characteristic $p > 2$.

The first attempts at describing graded polynomials of $M_{n}(K)$
were made by Brandão Júnior \cite{Junior}. Assuming an infinite
ground field $K$, he described the $\mathbb{Z}_{n}$-graded central
polynomials of $M_{n}(K)$ when $char K \nmid n$, as well as, the
$\mathbb{Z}_{p}$-graded central polynomials of $M_{p}(K)$ (where
$char K = p > 2$) and the $\mathbb{Z}$-graded central polynomials of
$M_{n}(K)$.

Few reports of graded identities of $M_{n}(K)$ exist in the
literature, when $K$ is an infinite integral domain. Brandão Júnior,
Koshlukov and Krasilnikov \cite{B} detailed a basis for the
$\mathbb{Z}_{2}$-graded identities of $M_{2}(K)$. They also
described a basis for the $\mathbb{Z}_{2}$-graded central
polynomials of $M_{2}(K)$.

In this paper, we combine the methods of \cite{Azevedo1},
\cite{Azevedo2}, \cite{Drensky2}, \cite{Junior}, \cite{Fonseca},
\cite{Silva}, \cite{Vasilovsky 1} and \cite{Vasilovsky 2}. This
paper aims for the following goals:
\begin{itemize}
\item Find a basis for the graded identities for elementary grading in $M_{n}(K)$
when the neutral component and diagonal coincide; \item Describe the
$\mathbb{Z}_{p}$-graded central polynomials of $M_{p}(K)$ when $p$
is a prime number; \item Describe the $\mathbb{Z}$-graded central
polynomials of $M_{n}(K)$.
\end{itemize}
In these three situations, $K$ is an infinite integral domain.

\section{Preliminaries}


Let $K$ be a fixed unital associative and commutative ring . We
assume that all modules are left-modules over $K$ and all (unital
associative) algebras are considered over $K$. We assume that all
ideals are bilateral ideals. The set $\{1,\cdots,n\}$ is denoted by
$\widehat{n}$. Moreover, $G$ denotes an arbitrary group and
$\mathbb{N} = \{0,1,2,\cdots,n,\cdots\}$ is the set of natural
numbers. Here, $S_{n}$ denotes the group of permutations on
$\widehat{n}$. $H_{n}$ denotes the subgroup of $S_{n}$ generated by
$(1 2 \cdots n)$.

Let $X$ be a countable set of variables and $K\langle X \rangle$ be
the free associative ring freely generated by $X$. Let $A$ be an
algebra over $K$ and let $Z(A)$ be the center of $A$. A polynomial
$f(x_{1},\cdots,x_{n}) \in K\langle X \rangle$ is called an ordinary
polynomial identity (respectively an ordinary central polynomial)
for $A$ if $f(a_{1},\cdots,a_{n}) = 0$ for all $a_{1},\cdots,a_{n}
\in A$ (respectively $f(0,\cdots,0) = 0$ and $f(a_{1},\cdots,a_{n})
\in Z(A)$). The algebra $A$ is a PI-algebra if there exists $f \in
K\langle X \rangle$ satisfying the following conditions:
\begin{description}
\item $f$ is an ordinary polynomial identity for $A$;
\item Some coefficient in the highest-degree
homogeneous component of $f$ equals to $1$.
\end{description}

The set of ordinary polynomial identities (respectively ordinary
central polynomials) of $A$ is an ideal (respectively submodule) of
$K\langle X \rangle$ that is invariant under all endomorphisms of
$K\langle X \rangle$. The ideals (respectively submodules) of
$K\langle X \rangle$ that are invariant under all endomorphisms of
$K\langle X \rangle$ are called $T$-ideals (respectively
$T$-spaces).

Clearly, the intersection of a family of $T$-ideals (respectively
$T$-spaces) of $K\langle X \rangle$ is also a $T$-ideal
(respectively a $T$-space). Let $S$ be a non-empty set of $K\langle
X \rangle$. The $T$-ideal (respectively $T$-space) generated by $S$,
denoted $\langle S \rangle_{T}$ (respectively $\langle S
\rangle^{T}$), is the intersection of all $T$-ideals (respectively
$T$-spaces) containing $S$.

An algebra $A$ over $K$ is $G$-graded when there exist
$K$-submodules $\{A_{g}\}_{g \in G} \subset A$ such that:
\begin{center}
$A = \bigoplus_{g\in G}A_{g} \ \ (1);$ \\
$A_{g}A_{h} \subset A_{gh} \ \mbox{for all} \ g,h \ \in G \ \ (2)$.
\end{center}

Each submodule $A_{g}$ is called the homogeneous component of
$G$-degree $g$ and its non-zero elements are homogeneous elements of
$G$-degree $g$. Moreover, if $a$ is a homogeneous element, its
$G$-degree is denoted by $\alpha(a)$. We denote the identity element
of $G$ by $e$. The support of $A$, with respect to the grading
$\{A_{g}\}_{g \in G}$, is the following subset of $G$:
\begin{center}
$Supp_{G}(A):= \{g \in G | A_{g} \neq \{0\}\}$.
\end{center}

Let $\{X_{g}| g \in G\}$ be a family of disjoint countable sets
indexed by $G$ and let $X = \bigcup_{g \in G} X_{g}$. $K\langle X
\rangle_{g}$ is the $K$-submodule of $K\langle X \rangle$ spanned by
$m = x_{j_{1}}\cdots x_{j_{k}}$ such that $\alpha(m) = g$. The
decomposition $K\langle X \rangle = \bigoplus_{g \in G} K\langle X
\rangle_{g}$ is a $G$-grading, whereby $K\langle X \rangle$ is the
$G$-graded free associative ring freely generated by $X$. A monomial
is a variable or a product of variables in $X$.

Let $m = x_{i_{1}}\cdots x_{i_{l}}$ be a monomial of $K\langle X
\rangle$. We denote by $h(m)$ the $l$-tuple
$(\alpha(x_{i_{1}}),\cdots,\alpha(x_{i_{l}}))$.

An endomorphism $\phi$ of $K\langle X \rangle$ is called $G$-graded
endomorphism when $\phi(K\langle X \rangle_{g}) \subset K\langle X
\rangle_{g} \ \forall \ g \in G$. When a graded ideal (respectively
a graded submodule) $I \subset K\langle X \rangle$ is invariant
under all $G$-graded endomorphisms of $K\langle X \rangle$ is called
a $T_{G}$-ideal (respectively a $T_{G}$-space). A graded polynomial
$f(x_{1},\cdots,x_{n}) \in K\langle X \rangle$ is a $G$-graded
polynomial identity for $A$ (respectively a $G$-graded central
polynomial for $A$) if $f(a_{1},\cdots,a_{n}) = 0$ for all $a_{i}
\in A_{\alpha(x_{i})}, \ i = 1,\cdots,n$ ($f(0,\cdots,0) = 0$ and
$f(a_{1},\cdots,a_{n}) \in Z(A)$ for all $a_{i} \in
A_{\alpha(x_{i})}, \ i = 1,\cdots,n$).

The set of all $G$-graded identities of $A$ (respectively all
$G$-graded central polynomials of $A$) is denoted by $T_{G}(A)$
(respectively $C_{G}(A)$). Clealy, $T_{G}(A)$ (respectively
$C_{G}(A)$) is a $T_{G}$-ideal (respectively a $T_{G}$-space and a
subalgebra) and the intersection of a family of $T_{G}$-ideals
(respectively $T_{G}$-spaces) of $K\langle X \rangle$ is also a
$T_{G}$-ideal (respectively a $T_{G}$-space). The $T_{G}$-ideal
(respectively $T_{G}$-space) generated by a non-empty set $S$ of
$K\langle X \rangle$ is defined as in the ordinary case. The
$T_{G}$-ideal (respectively a $T_{G}$-space) generated by $S$ is
denoted by $\langle S \rangle_{T_{G}}$ (respectively $\langle S
\rangle^{T_{G}}$). A graded polynomial $f$ is said to be a
consequence of $S \subset K\langle X \rangle$ if $f \in \langle S
\rangle_{T_{G}}$ (or equivalently, that $f$ follows from $S$). A set
$S \subset K\langle X \rangle$ is called a basis for the graded
identities (respectively the graded central polynomials) of $A$ if
$T_{G}(A) = \langle S \rangle_{T_{G}}$ (respectively $C_{G}(A) =
\langle S \rangle^{T_{G}}$).

The matrix unit $e_{ij} \in M_{n}(K)$ contains $1$ as the only a
non-zero value in the $i$-th row and $j$-th column. Given an
$n$-tuple $\overline{g} = (g_{1},\cdots,g_{n}) \in G^{n}$, a
$G$-grading is determined in $M_{n}(K)$ by stipulating that $e_{ij}$
is homogeneous of $G$-degree $g_{i}^{-1}g_{j}$. These gradings are
elementary and we say that $M_{n}(K)$ possesses an elementary
grading induced by $\overline{g}$. We equipped $M_{n}(K)$ with an
elementary grading induced by an $n$-tuple of distinct elements from
$G$. The set $\{g_{1},\cdots,g_{n}\}$ is denoted by $G_{n}$.

Below we provided two important examples of elementary gradings on
$M_{n}(K)$ whose neutral component and diagonal coincide:

\begin{description}
\item $\mathbb{Z}_{n}$-canonical grading (or $\mathbb{Z}_{n}$-grading): when $G = \mathbb{Z}_{n}$
and the $n$-tuple $\overline{g}$ is
$(\overline{1},\overline{2},\cdots,\overline{n-1},\overline{n})$;
\item $\mathbb{Z}$-canonical grading (or $\mathbb{Z}$-grading): when $G = \mathbb{Z}$
and the $n$-tuple $\overline{g}$ is $(1,2,\cdots,n-1,n)$.
\end{description}



\section{Silva's Generic Model}\label{silva}

Generic models have an important role in PI (see for instance
\cite{Drensky3}, \cite{Drensky4} and \cite{Procesi}). In this
section, we recall Silva's Generic Model, as described in
\cite{Silva}. Let $G$ be an arbitrary group and let $\overline{g} =
(g_{1},\cdots,g_{n}) \in G^{n}$ be an $n$-tuple of distinct elements
from $G$. We consider the algebra $M_{n}(K)$ to be equipped with the
elementary grading induced by $\overline{g}$. For each $h \in G$,
let $Y_{h} = \{y_{h,i}^{k}| 1\leq k \leq n; i \geq 1\}$ be a
countable set of commuting variables and let $Y = \bigcup_{h \in
G}Y_{h}$. Let $\Omega = K[Y]$ denote the polynomial ring with
commuting variables in $Y$. Let $M_{n}(\Omega)$ be the algebra of
all $n\times n$ matrices over $\Omega$. This algebra may be equipped
with the elementary grading induced by $\overline{g}$ as $M_{n}(K)$.
Let $G_{n}$ denote the set $\{g_{1},\cdots,g_{n}\}$.

\begin{definition}\label{def}
Let $h$ be an element of $G$. Let $L_{h}$ denote, the set of all
indices $k \in \widehat{n}$ such that $g_{k}h \in G_{n}$. Let
$s_{h}^{k}$ denote the index determined by $g_{s_{h}^{k}}:=g_{k}h$.

Let $\textbf{h} = (h_{1},\cdots,h_{m}) \in G^{m}$. $L_{\textbf{h}}$
defines (the set of indices associated with the $m$-tuple
$(h_{1},\cdots,h_{m}))$ the subset of $\widehat{m}$ such that its
elements satisfy the following property:
\begin{center}
$g_{k}h_{1}\cdots h_{i} \in G_{n} \ \forall \ i \ \in \widehat{m}$.
\end{center}

We define a sequence $(s_{1}^{k},\cdots,s_{m+1}^{k})$(the sequence
associated with $\textbf{h}$ is determined by $k$) inductively by
setting:
\begin{description}
\item $1) s_{1}^{k} = k$;
\item $2) s_{l}^{k} : g_{s_{l}^{k}} = g_{k}h_{1}\cdots h_{l-1} \ \ \forall \ l \in \{2,\cdots,m+1\}$
\end{description}
\end{definition}

A generic matrix of $G$-degree $h$ is a homogeneous element of
$M_{n}(\Omega)$ the following type:
\begin{center}
$A_{i}^{h} = \sum_{k \in L_{h}} y_{h,i}^{k}e_{k,s_{h}^{k}}$.
\end{center}
The $G$-graded subalgebra of $M_{n}(\Omega)$ generated by the
generic matrices is called the algebra of the generic matrices which
we denote by $R$. Notice that $Supp_{G}(R) = Supp_{G}(M_{n}(K))$.

The next lemma is an important computational result. Its proof is
the immediate consequence of a multiplication table of matrix units.

\begin{lemma}(\cite{Silva}, Lemma 3.5)\label{felipe0}
If $L$ is the set of indices associated with the $q$-tuple
$(h_{1},\cdots,h_{q})$ in $G^{q}$ and $s_{k} = (s_{1}^{k},\cdots,
s_{q+1}^{k})$ denotes the corresponding sequence determined by $k
\in L$ ,then
\begin{center}
$A_{i_{1}}^{h_{1}}\cdots A_{i_{q}}^{h_{q}} = \sum_{k \in
L}w_{k}e_{s_{1}^{k},s_{q+1}^{k}}$
\end{center}
which $w_{k}=
y_{h_{1},i_{1}}^{s_{1}^{k}}y_{h_{2},i_{2}}^{s_{2}^{k}}\cdots
y_{h_{q},i_{q}}^{s_{q}^{k}}$.
\end{lemma}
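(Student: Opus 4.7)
The plan is to proceed by induction on $q$. For the base case $q=1$, the set $L_{(h_1)}$ coincides with $L_{h_1}$ and the sequence associated with $k$ is simply $(s_1^k, s_2^k) = (k, s_{h_1}^k)$, so the asserted formula reduces verbatim to the definition $A_{i_1}^{h_1} = \sum_{k \in L_{h_1}} y_{h_1,i_1}^k e_{k, s_{h_1}^k}$ of a generic matrix of $G$-degree $h_1$.

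For the inductive step I would let $L'$ denote the set of indices associated with the shorter tuple $(h_1,\ldots,h_{q-1})$ and invoke the inductive hypothesis to write
\[
A_{i_1}^{h_1} \cdots A_{i_{q-1}}^{h_{q-1}} \;=\; \sum_{k \in L'} \bigl(y_{h_1,i_1}^{s_1^k}\cdots y_{h_{q-1},i_{q-1}}^{s_{q-1}^k}\bigr)\, e_{s_1^k,\, s_q^k}.
\]
Multiplying on the right by $A_{i_q}^{h_q} = \sum_{j \in L_{h_q}} y_{h_q,i_q}^{j}\, e_{j,\, s_{h_q}^j}$ and applying the matrix-unit rule $e_{a,b}e_{c,d}=\delta_{b,c}\,e_{a,d}$, only the summands with $j = s_q^k$ survive. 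To identify the surviving index set with $L$, observe that a $k \in L'$ contributes a nonzero term exactly when $s_q^k \in L_{h_q}$; since $g_{s_q^k} = g_k h_1 \cdots h_{q-1}$, this is the condition $g_k h_1 \cdots h_q \in G_n$, which together with the clauses defining $L'$ is precisely the definition of $L$. For the surviving matrix unit, if $j = s_q^k$ then $g_{s_{h_q}^j} = g_j h_q = g_k h_1 \cdots h_q = g_{s_{q+1}^k}$, and since the $g_i$ are distinct we conclude $s_{h_q}^j = s_{q+1}^k$; the accumulated coefficient is then $w_k$, as claimed.

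I do not foresee a real obstacle, since the statement is a direct transcription of matrix-unit multiplication into the notation of Definition \ref{def}. The only point to keep straight is that the sequence $(s_1^k,\ldots,s_{q+1}^k)$ attached to $(h_1,\ldots,h_q)$ restricts, by construction, to the sequence attached to $(h_1,\ldots,h_{q-1})$ on its first $q$ entries; this compatibility is what lets the indices propagate cleanly through the inductive step.
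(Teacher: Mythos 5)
Your proof is correct and follows the same route the paper intends: the paper simply declares the lemma an immediate consequence of the multiplication table of matrix units, and your induction on $q$ is the standard way of writing out exactly that computation, with the index bookkeeping (identifying the surviving indices with $L$ and using distinctness of the $g_i$ to pin down $s_{q+1}^k$) handled correctly.
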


\begin{definition}
Let $f(x_{1},\cdots,x_{n})$ be a polynomial of $K\langle X \rangle$
and let $A_{1} \in R_{\alpha(x_{1})},\cdots, \newline A_{n} \in
R_{\alpha(x_{n})}$. $f(A_{1},\cdots,A_{n})$ denotes the result of
replacing for the corresponding elements of $R$.
\end{definition}

The next lemma is the same in (\cite{Silva}, Lemma 4.5).

\begin{lemma}\label{felipe5}
Let $M(x_{1},\cdots,x_{q})$ and $N(x_{1},\cdots,x_{q})$ be two
monomials of $K\langle X \rangle$ that start with the same variable.
Let $m(x_{1},\cdots,x_{q})$, $n(x_{1},\cdots,x_{q})$ be two
monomials obtained from $M$ and $N$ respectively by deleting the
first variable. If there exist matrices $A_{1},\cdots,A_{q}$, such
that $M(A_{1},\cdots,A_{q})$ and $N(A_{1},\cdots,A_{q})$ have, in
the same position, the same non-zero entry ,then the matrices
$m(A_{1},\cdots,A_{q})$ and $n(A_{1},\cdots,A_{q})$ also have, in
the same position, the same non-zero entry.
\end{lemma}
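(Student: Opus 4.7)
The plan is to compute the entries of $M(A)$, $m(A)$, and likewise $N(A)$, $n(A)$, via Lemma~\ref{felipe0}, and to exploit the fact that $M$ and $N$ share their first variable.

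Write $M = x_{j_{0}}\cdot m$ and $N = x_{j_{0}}\cdot n$, where $x_{j_{0}}$ is the common first variable, and set $h := \alpha(x_{j_{0}})$. Working in the context of Section~\ref{silva}, I take each $A_{i}$ to be a generic matrix of degree $\alpha(x_{i})$, so that Lemma~\ref{felipe0} applies directly to each of the four products above. First I would invoke Lemma~\ref{felipe0} on $M(A)$ and on $m(A)$. The key structural remark is that, for every $k\in L_{M}$, the index $s_{2}^{k}$ (defined by $g_{s_{2}^{k}} = g_{k}h$, hence depending only on $k$ and $h$) lies in $L_{m}$, and the sequence associated with $m$ at $s_{2}^{k}$ is exactly the tail $(s_{2}^{k},s_{3}^{k},\ldots,s_{|M|+1}^{k})$ of the $M$-sequence at $k$. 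Consequently the explicit formula for $w_{k}$ in Lemma~\ref{felipe0} factors as
\[
w_{k}^{M} \;=\; y_{h,\,i_{j_{0}}}^{\,k}\,\cdot\, w_{s_{2}^{k}}^{\,m},
\]
which says that the $(k,\,s_{|M|+1}^{k})$-entry of $M(A)$ equals $y_{h,\,i_{j_{0}}}^{\,k}$ times the $(s_{2}^{k},\,s_{|M|+1}^{k})$-entry of $m(A)$. An identical factorization holds for $N(A)$ and $n(A)$ with the \emph{same} leading factor $y_{h,\,i_{j_{0}}}^{\,k}$, since $x_{j_{0}}$ is common to $M$ and $N$.

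Next I would apply the hypothesis. Suppose $M(A)$ and $N(A)$ share the non-zero entry at position $(a,b)$. Reading Lemma~\ref{felipe0} off both expansions forces $a\in L_{M}\cap L_{N}$ and $b = s_{|M|+1}^{a} = s_{|N|+1}^{a}$, and substituting the two factorizations gives
\[
y_{h,\,i_{j_{0}}}^{\,a}\cdot\bigl(m(A)\bigr)_{s_{2}^{a},\,b} \;=\; \bigl(M(A)\bigr)_{a,b} \;=\; \bigl(N(A)\bigr)_{a,b} \;=\; y_{h,\,i_{j_{0}}}^{\,a}\cdot\bigl(n(A)\bigr)_{s_{2}^{a},\,b}.
\]
Since $\Omega = K[Y]$ is an integral domain and $y_{h,\,i_{j_{0}}}^{\,a}\neq 0$, cancellation shows that $m(A)$ and $n(A)$ share the same non-zero entry at position $(s_{2}^{a},b)$, as required.

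The only real obstacle is bookkeeping: keeping the sequences $(s_{l}^{k})$ attached to $M$ (of length $|M|+1$) in step with those attached to $m$ (of length $|m|+1$). The substantive point is the single observation that the shift $s_{2}^{a}$ and the leading variable $y_{h,\,i_{j_{0}}}^{\,a}$ depend only on the first letter of the monomial, which is common to $M$ and $N$; integrality of $\Omega$ is then invoked just once, for the final cancellation.
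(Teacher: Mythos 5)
Your argument is correct and follows essentially the same route as the paper, whose proof consists of the single remark that the lemma is an immediate consequence of Lemma~\ref{felipe0}; you have simply made explicit the factorization $w_{k}^{M}=y_{h,i_{j_{0}}}^{k}w_{s_{2}^{k}}^{m}$ and the cancellation in the integral domain $\Omega=K[Y]$ that this remark leaves implicit. No discrepancy to report.
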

\begin{proof}
It is the immediate consequence of Lemma \ref{felipe0}.
\end{proof}

\begin{Remark}(\cite{Silva}, Corollary 3.7)\label{silva4}
Notice that if $m_{1}$ and $m_{2}$ are two monomials such that
$h(m_{1}) = h(m_{2})$, then $m_{1} \in T_{G}(R)$ if and only if
$m_{2} \in T_{G}(R)$.
\end{Remark}

\begin{Remark}\label{silva5}
Let $m$ be a monomial. Notice that $m \in T_{G}(R)$ if and only if
$m \in T_{G}(M_{n}(K))$.
\end{Remark}

The next lemmas three lemmas can be proved by elementary algebraic
methods.

\begin{lemma}\label{felipe3}
Let $f \in T_{G}(R)$. Then all multi-homogeneous components of $f$
are elements of $T_{G}(R)$.
\end{lemma}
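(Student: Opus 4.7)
The plan is to argue by the standard multihomogenization trick, handling one variable at a time. Suppose $f = f(x_{1},\ldots,x_{N}) \in T_{G}(R)$ and fix a variable $x_{j}$ of $G$-degree $h_{j}$. Decompose $f = \sum_{d=0}^{D} f^{(d)}$, where $f^{(d)}$ collects the monomials that are homogeneous of degree $d$ in $x_{j}$. For any $A_{i} \in R_{\alpha(x_{i})}$ and any $\lambda \in K$, the element $\lambda A_{j}$ still belongs to $R_{h_{j}}$, because the grading on $R$ is compatible with the $K$-module structure. Therefore
\begin{center}
$0 = f(A_{1},\ldots,\lambda A_{j},\ldots,A_{N}) = \sum_{d=0}^{D} \lambda^{d}\, f^{(d)}(A_{1},\ldots,A_{j},\ldots,A_{N})$
\end{center}
holds for every $\lambda \in K$.

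Next I would apply a Vandermonde argument. Since $K$ is infinite, pick pairwise distinct scalars $\lambda_{0},\ldots,\lambda_{D} \in K$; the Vandermonde determinant $V = \prod_{0\le s<t\le D}(\lambda_{t}-\lambda_{s})$ is a nonzero element of $K$. Setting $\lambda = \lambda_{s}$ and multiplying the resulting linear system by the adjugate of the Vandermonde matrix, I obtain
\begin{center}
$V \cdot f^{(d)}(A_{1},\ldots,A_{N}) = 0$ in $M_{n}(\Omega)$ for each $d$.
\end{center}
Because $K$ is an integral domain, the polynomial ring $\Omega = K[Y]$ is an integral domain as well, so $M_{n}(\Omega)$ is torsion-free as a $K$-module. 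Since $V \neq 0$ in $K$, each entry of the matrix $f^{(d)}(A_{1},\ldots,A_{N})$ must vanish; that is, $f^{(d)} \in T_{G}(R)$.

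Iterating this procedure over the remaining variables $x_{1},\ldots,x_{N}$ (using that each step produces $G$-graded identities to which the same argument applies) yields the full multi-homogeneous decomposition inside $T_{G}(R)$. The only substantive point is the Vandermonde step, and the only place where the hypothesis on $K$ is really used; this is where I would be careful, since $K$ is merely an infinite integral domain rather than a field, and one must ensure that the target module $M_{n}(\Omega)$ is $K$-torsion-free so that the Vandermonde scalar can be cancelled without inverting $V$.
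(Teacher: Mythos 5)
Your proof is correct and is precisely the standard Vandermonde/multihomogenization argument that the paper alludes to when it states this lemma "can be proved by elementary algebraic methods," including the one point that needs care over an integral domain: since $\Omega=K[Y]$ is a free, hence torsion-free, $K$-module, the nonzero Vandermonde determinant can be cancelled without being invertible.
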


\begin{lemma}\label{felipe12}
Let $f \in C_{G}(R)$. Then all multi-homogeneous components of $f$
are elements of $C_{G}(R)$.
\end{lemma}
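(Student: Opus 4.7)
The plan is to adapt the standard scalar-substitution (Vandermonde) argument used for Lemma \ref{felipe3} to the central setting, exploiting the fact that centrality is preserved by $K$-linear combinations and that $K$ is an infinite integral domain.

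First, I would write $f=\sum_{\alpha} f_{\alpha}$ as a sum of its multi-homogeneous components, where $\alpha=(\alpha_{1},\dots,\alpha_{n})$ ranges over the occurring multi-degrees. Given any substitution $A_{i}\in R_{\alpha(x_{i})}$ and any scalars $\lambda_{1},\dots,\lambda_{n}\in K$, note that $\lambda_{i}A_{i}$ still lies in $R_{\alpha(x_{i})}$ because multiplication by an element of $K$ does not alter the $G$-degree. Hence, by hypothesis,
\[
f(\lambda_{1}A_{1},\dots,\lambda_{n}A_{n})\;=\;\sum_{\alpha}\lambda_{1}^{\alpha_{1}}\cdots\lambda_{n}^{\alpha_{n}}\,f_{\alpha}(A_{1},\dots,A_{n})\;\in\;Z(R).
\]

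Next, I would test centrality against an arbitrary $r\in R$: for every choice of $(\lambda_{1},\dots,\lambda_{n})\in K^{n}$,
\[
\sum_{\alpha}\lambda_{1}^{\alpha_{1}}\cdots\lambda_{n}^{\alpha_{n}}\,\bigl[r,\;f_{\alpha}(A_{1},\dots,A_{n})\bigr]\;=\;0
\]
in $M_{n}(\Omega)$. Fixing $r$ and the $A_{i}$, each matrix entry of this identity is an element of $\Omega=K[Y]$, so the whole left-hand side can be viewed as a polynomial in the auxiliary indeterminates $t_{1},\dots,t_{n}$ with coefficients in $M_{n}(\Omega)$, which vanishes upon every substitution $t_{i}\mapsto\lambda_{i}\in K$. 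Since $K$ is an infinite integral domain embedded in the integral domain $\Omega$, the standard one-variable-at-a-time argument (a polynomial over an integral domain with infinitely many roots in an infinite subring is zero) forces every coefficient to vanish entrywise. Therefore $[r,\;f_{\alpha}(A_{1},\dots,A_{n})]=0$ for every $\alpha$, every $r\in R$, and every admissible substitution, i.e.\ $f_{\alpha}(A_{1},\dots,A_{n})\in Z(R)$.

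Finally, I would check the vanishing-at-zero condition: for $\alpha\neq 0$ multi-homogeneity gives $f_{\alpha}(0,\dots,0)=0$ automatically, and the component $f_{0}$ is the constant term of $f$, which must already be zero since $f(0,\dots,0)=0$. Combining both facts, every $f_{\alpha}$ lies in $C_{G}(R)$. The only real subtlety I anticipate is the Vandermonde step: one must remember that the scalars $\lambda_{i}$ live in $K$ while the matrix entries live in the larger polynomial ring $\Omega$, so the vanishing-of-coefficients principle must be justified over the integral domain $\Omega$ rather than over $K$ itself. Once this is observed the rest of the proof is formal and mirrors Lemma \ref{felipe3}.
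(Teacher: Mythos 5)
Your argument is correct: the paper gives no proof of this lemma at all (it is one of the three statements dismissed as provable ``by elementary algebraic methods''), and the scalar-substitution/Vandermonde argument you give---substituting $\lambda_i A_i$, testing commutators against arbitrary $r\in R$, and using that a polynomial over the integral domain $\Omega=K[Y]$ vanishing on the infinite domain $K$ is identically zero---is exactly the standard elementary method intended. Your attention to the two subtleties (coefficients of the commutator identity live in $\Omega$ rather than $K$, and the vanishing-at-zero condition for the component of multidegree zero) is appropriate and the proof is complete as written.
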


\begin{lemma}\label{felipe4}
Let $m(x_{1},\cdots,x_{q}) = x_{i_{1}}\cdots x_{i_{r}}$ and
$n(x_{1},\cdots,x_{q})$ be two monomials such that the matrices
$n(A_{1},\cdots,A_{q})$ and $m(A_{1},\cdots, A_{q})$ have, at some
position, the same non-zero entry. Then $m - n$ is a
multi-homogeneous polynomial.
\end{lemma}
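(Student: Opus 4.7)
The plan is to evaluate both monomials at specific generic matrices of Silva's model and read off the structure of their entries via Lemma \ref{felipe0}. Write $m = x_{i_1}\cdots x_{i_r}$ and $n = x_{j_1}\cdots x_{j_s}$, and take $A_k = A_k^{\alpha(x_k)}$ to be the $k$-th generic matrix of the appropriate degree. Then $m(A_1,\ldots,A_q) = A_{i_1}^{\alpha(x_{i_1})}\cdots A_{i_r}^{\alpha(x_{i_r})}$, and by Lemma \ref{felipe0} every non-zero entry of this product is a single commuting monomial of the form $y_{\alpha(x_{i_1}),i_1}^{s_1^k}\cdots y_{\alpha(x_{i_r}),i_r}^{s_r^k}$ for some $k$ in the associated index set; the analogous description holds for $n(A_1,\ldots,A_q)$.

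Next I would exploit the hypothesis. If the two matrices display the same non-zero entry at some position, then the two monomials above are equal as elements of $\Omega = K[Y]$. Since $K$ is an integral domain, $\Omega$ is a polynomial ring in commuting variables over an integral domain, and distinct monomials are linearly independent there; the equality therefore forces the multi-degrees of the two products to agree in each individual $y$-variable. In particular, for every fixed $i \in \widehat{q}$, the total number of $y$-factors whose middle index equals $i$ must coincide on the two sides.

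Finally, I would translate this count back to $m$ and $n$. By construction, each occurrence of $x_i$ in $m$ contributes exactly one $y$-factor with middle index $i$ to any non-zero entry of $m(A_1,\ldots,A_q)$, and likewise for $n$. Thus $\deg_{x_i}(m) = \deg_{x_i}(n)$ for every $i$, so $m$ and $n$ share the same multi-degree and $m-n$ is multi-homogeneous. The only delicate step is the passage from equality of entries to equality of multi-degrees, which is exactly where the integral-domain assumption on $K$ enters; everything else is a direct bookkeeping consequence of Lemma \ref{felipe0}.
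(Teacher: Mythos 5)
Your argument is correct: by Lemma \ref{felipe0} each non-zero entry of $m(A_{1},\cdots,A_{q})$ or $n(A_{1},\cdots,A_{q})$ is a single monomial in the commuting variables $y_{h,i}^{k}$ in which each occurrence of $x_{i}$ contributes exactly one factor with middle index $i$, so equality of two such non-zero entries forces $\deg_{x_{i}}(m)=\deg_{x_{i}}(n)$ for all $i$. The paper gives no proof (it only asserts the lemma follows by ``elementary algebraic methods''), and your route via Lemma \ref{felipe0} is clearly the intended one; the only quibble is that the step you attribute to the integral-domain hypothesis really uses only that the monomials form a $K$-basis of $K[Y]$ with $1\neq 0$, which holds over any non-trivial commutative ring.
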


Observe that $T_{G}(R) \subset T_{G}(M_{n}(K))$. The proof of the
next lemma is similar to that proof in (\cite{Azevedo1},Lemma 3).

\begin{lemma}\label{felipe2}
Let $K$ be an infinite integral domain.Then $T_{G}(M_{n}(K)) =
T_{G}(R)$.
\end{lemma}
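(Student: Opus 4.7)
The content of the lemma is the reverse inclusion $T_G(M_n(K)) \subset T_G(R)$, since the other inclusion is the observation preceding the lemma. Fix $f \in T_G(M_n(K))$ and let $B_1, \ldots, B_m \in R$ be homogeneous with $\alpha(B_i) = h_i := \alpha(x_i)$; the goal is $f(B_1, \ldots, B_m) = 0$ in $M_n(\Omega)$. My plan is the standard ``evaluate at generic matrices, then specialize'' argument, carried out in two steps.

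\emph{Step 1 (the canonical generic substitution).} Form $F := f(A_1^{h_1}, \ldots, A_m^{h_m}) \in M_n(\Omega)$. By Lemma \ref{felipe0}, each entry of $F$ is a polynomial in finitely many of the commuting variables $y_{h,j}^{k}$ with coefficients in $K$. For any specialization of those variables to values in $K$, each generic matrix $A_i^{h_i}$ becomes a concrete matrix in $M_n(K)_{h_i}$ (this is where the shape prescribed by Definition \ref{def} matters), so the specialized value of $F$ is exactly $f$ evaluated on such matrices and hence vanishes by hypothesis. Thus every entry of $F$ is a polynomial in $K[y_1, \ldots, y_N]$ vanishing identically on $K^N$. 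Since $K$ is an infinite integral domain, an easy induction on $N$ (using that a nonzero univariate polynomial over a domain has only finitely many roots) forces each entry of $F$ to be the zero polynomial, so $F = 0$ in $M_n(\Omega)$.

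\emph{Step 2 (extension to arbitrary homogeneous substitutions in $R$).} Introduce a disjoint fresh family of commuting variables $Y' = \bigcup_h Y'_h$, put $\Omega' = K[Y']$, and let $\tilde{A}_i^{h_i} \in M_n(\Omega')$ be the corresponding generic matrices. Applied to $\Omega'$, Step 1 yields $f(\tilde{A}_1^{h_1}, \ldots, \tilde{A}_m^{h_m}) = 0$. I then define a $K$-algebra map $\psi \colon \Omega' \to \Omega$ by sending each variable that occurs as an entry of $\tilde{A}_i^{h_i}$ to the corresponding entry of $B_i$ (which, being homogeneous of degree $h_i$, is supported precisely on the positions occupied by $\tilde{A}_i^{h_i}$) and any remaining variable to $0$. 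The entrywise extension $M_n(\Omega') \to M_n(\Omega)$ is a $G$-graded $K$-algebra homomorphism---both gradings are elementary with the same inducing tuple $\overline{g}$---and carries each $\tilde{A}_i^{h_i}$ to $B_i$, so applying it to the vanishing identity from Step 1 gives $f(B_1, \ldots, B_m) = 0$.

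The main obstacle is the bookkeeping that keeps both specializations $G$-graded. In Step 1 one must check that every $K$-valued assignment to the entries of $A_i^{h_i}$ lands in $M_n(K)_{h_i}$, which is built into Definition \ref{def} since $A_i^{h_i}$ is supported exactly in the positions $(k, s_{h_i}^k)$ for $k \in L_{h_i}$. In Step 2 one must verify that $\psi$ really is a graded homomorphism sending $\tilde{A}_i^{h_i}$ to $B_i$, which reduces to the same observation applied to $B_i \in R_{h_i}$. Once those details are in place, the whole argument becomes a routine specialization argument over an infinite integral domain, running parallel to Azevedo's Lemma 3 in \cite{Azevedo1}.
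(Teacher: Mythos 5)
Your proof is correct and follows the same route the paper intends: the paper gives no argument of its own but defers to Azevedo's Lemma 3 in \cite{Azevedo1}, which is exactly this specialization argument (evaluate at generic matrices, use that a polynomial over an infinite integral domain vanishing on all of $K^N$ is zero, then push the resulting identity onto arbitrary homogeneous elements of $R$ via a substitution homomorphism). Both of your steps, including the check that specializations respect the elementary grading, are sound.
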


\begin{corollary}\label{felipe11}
Let $K$ be an infinite integral domain.Then $C_{G}(M_{n}(K)) =
C_{G}(R)$.
\end{corollary}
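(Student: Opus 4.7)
The plan is to reduce the corollary to Lemma \ref{felipe2} via the standard trick that expresses graded central polynomials as graded identities of commutators.

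First I would verify the following routine equivalence: for any $G$-graded $K$-algebra $A$ and any polynomial $f(x_1, \ldots, x_n) \in K\langle X \rangle$ with $f(0, \ldots, 0) = 0$, one has $f \in C_G(A)$ if and only if, for every $g \in Supp_G(A)$ and every fresh variable $x_{n+1} \in X_g$, the commutator $[f(x_1, \ldots, x_n), x_{n+1}]$ belongs to $T_G(A)$. This is immediate from the decomposition $A = \bigoplus_{g \in G} A_g$: an element of $A$ lies in $Z(A)$ exactly when it commutes with every homogeneous element of $A$, which is precisely what the family of commutator identities records.

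With this equivalence in hand, the corollary becomes almost tautological. Given $f \in C_G(R)$, the vanishing $f(0, \ldots, 0) = 0$ is intrinsic to $f$, and the commutators $[f, x_{n+1}]$ (one for each $g \in Supp_G(R)$ and each fresh $x_{n+1} \in X_g$) all lie in $T_G(R)$. By Lemma \ref{felipe2} we have $T_G(R) = T_G(M_n(K))$, and the paragraph following the definition of $R$ already observed that $Supp_G(R) = Supp_G(M_n(K))$. Hence the same family of commutators lies in $T_G(M_n(K))$, and reversing the equivalence yields $f \in C_G(M_n(K))$. The reverse inclusion $C_G(M_n(K)) \subset C_G(R)$ is proved identically, giving the claimed equality.

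I do not anticipate any genuine obstacle: the only non-formal input is Lemma \ref{felipe2}, and the identification of central polynomials with families of commutator identities is book-keeping from the graded decomposition of $A$. The one point worth being careful about is that the family of test variables $x_{n+1}$ used on the two sides is indexed by the same subset of $G$, which is exactly what the equality $Supp_G(R) = Supp_G(M_n(K))$ guarantees.
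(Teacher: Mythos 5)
Your proposal is correct and matches the paper's (implicit) reasoning: the corollary is stated as an immediate consequence of Lemma \ref{felipe2}, and the commutator bookkeeping you supply---$f \in C_{G}(A)$ iff $f(0,\cdots,0)=0$ and $[f,x_{n+1}] \in T_{G}(A)$ for homogeneous test variables---is exactly the standard reduction that justifies it. No issues.
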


\section{Some graded identities of $R$ and Type 1-monomials}

In this section, we present some graded identities for elementary
grading in $R$ when the neutral component and diagonal coincide.
Notice that $R$ is equipped with the elementary grading induced by
an $n$-tuple of pairwise elements from $G$ if and only if $R_{e}$
and diagonal coincide.

The next lemma was proved by Bahturin and Drensky in
(\cite{Drensky2}, Lemma 4.1) for full algebra of $n$ by $n$ matrices
over a field of characteristic zero.

\begin{lemma}\label{luis1}
The following graded polynomials are $G$-graded polynomial
identities for $R$:
\begin{center}
$\bullet \ x_{1}x_{2} - x_{2}x_{1} \ \ \mbox{when} \ \ \alpha(x_{1})
= \alpha(x_{2}) = e
\ (1)$; \\

$\bullet \ x_{1}x_{2}x_{3} - x_{3}x_{2}x_{1} \ \ \mbox{when} \ \
\alpha(x_{1}) = \alpha(x_{3})
= (\alpha(x_{2}))^{-1} \neq e \ \ (2)$; \\

$\bullet \ x_{1} \ \ \mbox{when} \ \ R_{\alpha(x_{1})} = \{0\} \ \
(3)$.
\end{center}
\end{lemma}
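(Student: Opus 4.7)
The plan is to verify each of the three claims directly by substituting generic matrices and using the explicit product formula from Lemma~\ref{felipe0}. Since $T_{G}(R)$ is characterized by vanishing on substitutions of generic matrices, it suffices to check each polynomial on an arbitrary tuple $A_{i_{1}}^{h_{1}},A_{i_{2}}^{h_{2}},\dots$ of appropriate graded generic matrices.

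For identity (3), the argument is immediate: if $R_{\alpha(x_{1})}=\{0\}$, then the only admissible substitution for $x_{1}$ is the zero matrix, so $x_{1}$ vanishes identically on $R$. For identity (1), I would use the hypothesis that the neutral component of $R$ coincides with the diagonal. Concretely, an element of $R_{e}$ must be a homogeneous matrix of $G$-degree $e$, and since the $n$-tuple $\overline{g}$ has pairwise distinct entries, the only matrix units of $G$-degree $e$ are $e_{kk}$. Hence any element of $R_{e}$ is diagonal, and two diagonal matrices commute; therefore $x_{1}x_{2}-x_{2}x_{1}$ vanishes on $R$ as soon as both inputs have degree~$e$.

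For identity (2), which is the main case, I would fix $h=\alpha(x_{1})=\alpha(x_{3})\neq e$, so that $\alpha(x_{2})=h^{-1}$, and apply Lemma~\ref{felipe0} to the $3$-tuples $(h,h^{-1},h)$ and $(h,h^{-1},h)$ coming from the two orderings. For a row index $k$ belonging to the set $L$ of admissible indices associated to $(h,h^{-1},h)$, the sequence $(s_{1}^{k},s_{2}^{k},s_{3}^{k},s_{4}^{k})$ satisfies $g_{s_{1}^{k}}=g_{k}$, $g_{s_{2}^{k}}=g_{k}h$, $g_{s_{3}^{k}}=g_{k}hh^{-1}=g_{k}$, and $g_{s_{4}^{k}}=g_{k}h$; since $\overline{g}$ has distinct entries, this forces $s_{1}^{k}=s_{3}^{k}=k$ and $s_{2}^{k}=s_{4}^{k}=s_{h}^{k}$. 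By Lemma~\ref{felipe0},
\[
A_{i_{1}}^{h}A_{i_{2}}^{h^{-1}}A_{i_{3}}^{h}=\sum_{k\in L}y_{h,i_{1}}^{k}\,y_{h^{-1},i_{2}}^{s_{h}^{k}}\,y_{h,i_{3}}^{k}\,e_{k,s_{h}^{k}},
\]
and an entirely analogous computation for the reversed product $A_{i_{3}}^{h}A_{i_{2}}^{h^{-1}}A_{i_{1}}^{h}$ yields the same sum with $y_{h,i_{1}}^{k}$ and $y_{h,i_{3}}^{k}$ interchanged. Since the variables in $\Omega=K[Y]$ all commute, the two expressions coincide, proving (2).

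The only real obstacle is bookkeeping in case (2): verifying that both triples $(h,h^{-1},h)$ produce the same admissible index set $L$ and the same pair of column labels $(k,s_{h}^{k})$ at each step. This follows from the distinctness of the entries of $\overline{g}$, which makes the map $k\mapsto s_{h}^{k}$ a well-defined partial injection and forces the collapse $s_{3}^{k}=k$ described above. Once this is in place, commutativity of the $y$-variables closes the argument.
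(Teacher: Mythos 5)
Your overall strategy --- direct verification via the product formula of Lemma~\ref{felipe0} --- is essentially the paper's own approach, which invokes Lemma~\ref{felipe0} together with the Bahturin--Drensky computation. Cases (1) and (3) are handled correctly: in (1) you rightly argue about \emph{arbitrary} elements of $R_{e}$ (they all lie in the diagonal of $M_{n}(\Omega)$, hence commute over the commutative ring $\Omega$), and (3) is vacuous.

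There is, however, a gap in case (2). To show that $x_{1}x_{2}x_{3}-x_{3}x_{2}x_{1}$ lies in $T_{G}(R)$ you must make it vanish for \emph{all} substitutions $x_{i}\mapsto B_{i}\in R_{\alpha(x_{i})}$, and the homogeneous component $R_{h}$ is spanned by \emph{products} of generic matrices of total $G$-degree $h$, not by the single generic matrices $A_{i}^{h}$ alone; checking an identity only on generators of an algebra is not sufficient in general. Your computation treats only the substitution by single generic matrices (for case (1) you were careful about exactly this point, but not here). The gap is easy to close in either of two ways. One can run the same Lemma~\ref{felipe0} bookkeeping for $B_{1}B_{2}B_{3}$ versus $B_{3}B_{2}B_{1}$, where $B_{1},B_{3}$ are products of generic matrices of degree $h$ and $B_{2}$ is a product of degree $h^{-1}$: the superscripts attached to the variables of each block are governed by the accumulated $G$-degree at the start of that block, which is $e$, $h$, $e$ in both orderings, so the two entries in position $(k,s_{h}^{k})$ are identical commutative monomials. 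Alternatively, and more simply, verify the identity on arbitrary homogeneous elements of $M_{n}(\Omega)$: writing $B_{1}=\sum_{k}a_{k}e_{k,s_{h}^{k}}$, $B_{2}=\sum_{k}b_{k}e_{k,s_{h^{-1}}^{k}}$, $B_{3}=\sum_{k}c_{k}e_{k,s_{h}^{k}}$ and using $s_{h^{-1}}^{s_{h}^{k}}=k$, both triple products equal $\sum_{k}a_{k}b_{s_{h}^{k}}c_{k}e_{k,s_{h}^{k}}$ by commutativity of $\Omega$; since $R$ is a graded subalgebra of $M_{n}(\Omega)$, the identity for $R$ follows.
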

\begin{proof}
It follows from Lemma \ref{felipe0} and the proof of Lemma 4.1 in
\cite{Drensky2}.
\end{proof}

\begin{definition}
Let $J$ be the $T_{G}$-ideal generated by $(1),(2)$ and $(3)$. Let
$J_{1}$ be the $T_{G}$-ideal generated by $(1)$ and $(2)$ only.
\end{definition}

\begin{definition}
Let $m = x_{i_{1}}\cdots x_{i_{q}}$ be a $G$-graded monomial. An
element $n \in K\langle X \rangle$ is called a subword of $m$ when
there exist $j \in \{0,\cdots,q\}$ and $l \in \mathbb{N}$, where $j
+ l \leq q$, such that:
\begin{center}
$n = x_{i_{j}}\cdots x_{i_{j+l}}$.
\end{center}
Likewise, the monomial $n$ is termed a proper subword of $m$ when
$n$ is a subword of $m$, $n\neq m$ and $n \neq 1$.
\end{definition}

\begin{definition}
Let $m = x_{i_{1}}\cdots x_{i_{l}}$ be a $G$-graded monomial. This
monomial is called a Type 1-monomial when the $G$-degree of all its
non-empty subwords are elements of $Supp_{G}(R)$.
\end{definition}

(\cite{Drensky2}) shows an example of $G$-graded Type 1-monomial
identities of $M_{n}(K)$ (see Example 4.7,\cite{Drensky2}).

In this paper, the following lemma is useful.

\begin{lemma}\label{felipe6.999}
Let $m$ be a multilinear Type 1-monomial. Let $\overline{m}$ be the
polynomial obtained from $m$ by deleting the variables of $G$-degree
$e$ by one. Then $m \in T_{G}(R)$ if and only if $\overline{m} \in
T_{G}(R)$.
\end{lemma}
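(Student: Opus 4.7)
The plan is to apply Lemma \ref{felipe0} to both $m$ and $\overline{m}$ and reduce each identity question to a purely combinatorial condition on the associated index set $L$ of Definition \ref{def}. Write $m = x_{i_{1}}\cdots x_{i_{q}}$ with degree sequence $h(m)=(h_{1},\ldots,h_{q})$, fix a variable $x_{i_{j}}$ with $h_{j}=e$ that we wish to delete, and let $h'=(h_{1},\ldots,h_{j-1},h_{j+1},\ldots,h_{q})$ be the resulting degree sequence of $\overline{m}$.

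First I would establish that, for a multilinear monomial $n$, $n\in T_{G}(R)$ if and only if the set $L(n)$ is empty. Since $R$ is generated by the generic matrices and $n$ is multilinear, it suffices to test $n$ on a single tuple of generic matrices with pairwise distinct indices. Lemma \ref{felipe0} then gives
\[
n\bigl(A_{i_{1}}^{h_{1}},\ldots,A_{i_{q}}^{h_{q}}\bigr)=\sum_{k\in L(n)}w_{k}\,e_{k,\,s_{q+1}^{k}},
\]
where each $w_{k}$ is a nonzero monomial in $\Omega$. Because $s_{1}^{k}=k$, the matrix units appearing in the sum sit in pairwise distinct rows, so they are linearly independent over $\Omega$. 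Hence the evaluation vanishes precisely when $L(n)=\emptyset$.

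Next I would compare $L(m)$ and $L(\overline{m})$ directly from the definition. A starting index $k\in\widehat{n}$ lies in $L(m)$ exactly when every partial product $g_{k}h_{1}\cdots h_{l}$ belongs to $G_{n}$. Since $h_{j}=e$, the partial product at $l=j$ coincides with the one at $l=j-1$, so deleting $x_{i_{j}}$ neither adds nor removes any constraint on $k$; after reindexing, what remains is exactly the defining condition of $L(\overline{m})$. Thus $L(m)=L(\overline{m})$, and combining this with the first step yields $m\in T_{G}(R)\iff\overline{m}\in T_{G}(R)$. If $m$ contains several variables of $G$-degree $e$, the statement follows by iterating the one-variable removal.

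The principal obstacle is making rigorous the first step: one must be sure that the terms $w_{k}e_{k,s_{q+1}^{k}}$ for $k\in L(n)$ cannot cancel, so that the sum is nonzero whenever $L(n)$ is nonempty. This is exactly what the observation $s_{1}^{k}=k$ gives, since it forces the matrix units into pairwise distinct rows. The Type 1 hypothesis is not itself needed for the combinatorial identification $L(m)=L(\overline{m})$, but it guarantees the relevant setting, ruling out trivial vanishing through identity (3) of Lemma \ref{luis1} and ensuring that each subword makes honest sense in $R$.
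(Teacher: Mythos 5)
The paper states this lemma without proof (it appears between the Type~1 definition and Lemma~\ref{luis4}, with no proof environment attached), so there is no argument of record to compare against; judged on its own terms, your proof is correct and uses exactly the machinery the paper has already set up. The central reduction --- a multilinear monomial lies in $T_{G}(R)$ if and only if the index set $L$ of its degree sequence (Definition~\ref{def}) is empty --- is right: by Lemma~\ref{felipe0} the generic evaluation is $\sum_{k\in L}w_{k}e_{k,s_{q+1}^{k}}$, the terms occupy pairwise distinct rows because $s_{1}^{k}=k$, and each $w_{k}$ is a nonzero monomial of $\Omega$. Your observation that deleting an $h_{j}=e$ leaves the set of partial products $g_{k}h_{1}\cdots h_{l}$ unchanged, hence $L(m)=L(\overline{m})$, is the whole content of the lemma, and iterating over the degree-$e$ variables finishes it. One step you should tighten: the claim that it suffices to test $n$ on a single tuple of generic matrices. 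Elements of $R_{h}$ are $K$-linear combinations of \emph{products} of generic matrices, not only of generic matrices themselves; multilinearity reduces you to substituting such products, and you then need the (easy) extra observation that refining the degree tuple can only shrink $L$ --- or, more simply, invoke Remark~\ref{silva5} and test on matrix units of $M_{n}(K)$, where the same path argument applies verbatim. Your closing remark that the Type~1 hypothesis is not strictly needed for the combinatorial identification is also accurate: a multilinear monomial that is not Type~1 automatically has empty index set, so the equivalence holds there vacuously, and the hypothesis mainly serves to place the lemma in the setting where it will be used.
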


The proof of the following lemma is left as an exercise.

\begin{lemma}\label{luis4}
Let $m \in T_{G}(R)$ be a monomial. If $m$ is not a Type 1-monomial
identity, then $m$ follows from $(3)$.
\end{lemma}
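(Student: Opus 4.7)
The plan is to show directly that $m \in \langle (3) \rangle_{T_{G}}$ by locating a subword of $m$ whose $G$-degree witnesses identity $(3)$, and then propagating that bad degree to all of $m$ via the closure properties of a $T_{G}$-ideal.

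First I unpack the hypothesis. Since $m$ is not a Type $1$-monomial, the definition supplies a non-empty subword $n$ of $m$ whose $G$-degree $g := \alpha(n)$ does not lie in $Supp_{G}(R)$; equivalently, $R_{g} = \{0\}$. Writing $m = m_{1} \cdot n \cdot m_{2}$, with $m_{1}$ and $m_{2}$ (possibly empty) monomials in the remaining variables, isolates the piece I need.

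Next I introduce a fresh variable $x$ of $G$-degree $g$. Because $R_{g} = \{0\}$, the variable $x$ is itself an instance of $(3)$, so $x \in \langle (3) \rangle_{T_{G}}$. The endomorphism $\phi$ of $K\langle X \rangle$ that sends $x$ to $n$ and fixes every variable appearing in $m_{1}$ and $m_{2}$ is $G$-graded, since $n$ is homogeneous of $G$-degree $g$. Invariance of $\langle (3) \rangle_{T_{G}}$ under $G$-graded endomorphisms gives $n = \phi(x) \in \langle (3) \rangle_{T_{G}}$, and the two-sided ideal property then yields $m = m_{1} \cdot n \cdot m_{2} \in \langle (3) \rangle_{T_{G}}$.

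There is essentially no obstacle here; indeed, the assumption $m \in T_{G}(R)$ is not actually used, which matches the paper's designation of the result as an exercise. The only point requiring attention is that the substitution $\phi$ respects the grading, which reduces to the equality $\alpha(x) = g = \alpha(n)$ by construction.
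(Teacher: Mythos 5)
Your proof is correct; the paper leaves this lemma as an exercise, and your argument is the natural one it intends: a non--Type 1 monomial has a non-empty subword $n$ with $R_{\alpha(n)}=\{0\}$, so $n$ is a graded substitution instance of $(3)$ and $m=m_{1}nm_{2}$ lies in the $T_{G}$-ideal it generates. Your remark that the hypothesis $m\in T_{G}(R)$ is not needed is also consistent with the paper, since $(3)$ is a graded identity of $R$ by Lemma \ref{luis1}, so every consequence of $(3)$ automatically lies in $T_{G}(R)$.
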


\section{Type 1-monomial identities of $R$}

This section describes the monomial Type 1-identities  for
elementary grading in $R$ when the neutral component and diagonal
coincide. The number $s$ denotes $|Supp_{G}(R)|$ and $\lambda$
denotes the number $[s+1][(s+1)(\sum_{i=1}^{s} (s - 1)^{i}) + 1]$.

\begin{definition}
Let $m = x_{i_{1}}\cdots x_{i_{q}}$. Let $k,l$ be two positive
integers such that $1 \leq k \leq l \leq q$. We define the monomial
$m^{[k,l]}$ obtained from $m$ by deleting the $k-1$ first variables
and the $q - l$ last variables.
\end{definition}

\begin{definition}
Let $\mathcal{S}$ denote the set of all sequences with elements in
$Supp_{G}(R)$ whose length is less than $s + 1$. Let $\mathcal{A} =
\{(g_{1},\dots,g_{m}) \in \mathcal{S}| g_{1}.\dots.g_{m} = e\}$.
\end{definition}
\begin{Remark}
Notice that $|\mathcal{S}| = \sum_{i=1}^{s} s^{i}$.
\end{Remark}
\begin{definition}
A monomial $m = x_{i_{1}}\cdots x_{i_{l}}$ is called a Type
2-monomial when there exist $a \in \mathbb{N} - \{0\}$, $p_{1},p_{2}
\in \widehat{l}$ such that $1 \leq p_{1} < p_{1} + a < p_{2} < p_{2}
+ a \leq l$ and:
\begin{description}
\item $\alpha(x_{i_{p_{1}}}\cdots x_{i_{p_{1} + a}}) =
\alpha(x_{i_{p_{1}+a+1}}\cdots x_{i_{p_{2}-1}}) =
\alpha(x_{i_{p_{2}}}\cdots x_{i_{p_{2} + a}}) = e$;
\item $ h(x_{i_{p_{1}}}\cdots x_{i_{p_{1} + a}}) =
h(x_{i_{p_{2}}}\cdots x_{i_{p_{2} + a}})$.
\end{description}
\end{definition}

\begin{definition}
Let $m = x_{i_{1}}\cdots x_{i_{l}}$ be a monomial. It is called a
Type 3-monomial when it does not have a proper subword of $G$-degree
$e$. Otherwise, it is called a Type 4-monomial.
\end{definition}

\begin{corollary}\label{tec2}
Let $m = x_{i_{1}}\cdots x_{i_{l}}$ be a Type 1-monomial without
variables of $G$-degree $e$. If $l > s$, then $m$ is a Type 4-
monomial.
\end{corollary}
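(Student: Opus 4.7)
The plan is to apply the pigeonhole principle to the $G$-degrees of the prefixes of $m$. For each $k \in \{1, \ldots, l\}$, I would set $g_k := \alpha(x_{i_1} \cdots x_{i_k})$, the $G$-degree of the length-$k$ prefix of $m$. Each such prefix is a non-empty subword of $m$, and since $m$ is a Type 1-monomial, every $g_k$ must belong to $Supp_{G}(R)$, which has cardinality $s$. From $l > s$ and pigeonhole, there must exist indices $1 \leq k_1 < k_2 \leq l$ with $g_{k_1} = g_{k_2}$.

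I would then examine the subword $n := x_{i_{k_1+1}} \cdots x_{i_{k_2}}$. Its $G$-degree equals $g_{k_1}^{-1} g_{k_2} = e$; it is non-empty since $k_2 - k_1 \geq 1$; and it differs from $m$ because $k_1 \geq 1$ forces $n$ to omit the leading variable $x_{i_1}$. Hence $n$ is a proper subword of $m$ of $G$-degree $e$, which by definition makes $m$ a Type 4-monomial.

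There is no real obstacle here: the proof is a one-step pigeonhole argument that uses only the definition of Type 1-monomial (to guarantee $g_k \in Supp_{G}(R)$ for every $k$) and the fact that $|Supp_{G}(R)| = s$. The hypothesis that $m$ has no variables of $G$-degree $e$ is not invoked in the pigeonhole step itself; it simply excludes the trivial case in which a single variable of $G$-degree $e$ would already furnish a proper subword of degree $e$, and the same conclusion would follow without it.
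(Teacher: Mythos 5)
Your proof is correct and is essentially the paper's own argument: both apply the pigeonhole principle to the $G$-degrees of the prefixes $m^{[1,t]}$, which lie in $Supp_{G}(R)$ because $m$ is a Type 1-monomial, and then extract the proper subword $m^{[t_{1}+1,t_{2}]}$ of $G$-degree $e$. The only difference is cosmetic: the paper additionally records that the absence of variables of $G$-degree $e$ forces $t_{1}+1<t_{2}$ (so the extracted subword has length at least two), a fact not needed for the corollary's stated conclusion, exactly as you observe.
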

\begin{proof}
Let $\beta(t) =\alpha(m^{[1,t]})$ be a function with domain
$\widehat{l}$ and codomain $Supp_{G}(R)$. For the hypothesis, $l
> s$. Consequently, according to the Pigeonhole Principle, there
exists $1 \leq t_{1} < t_{2}\leq l$ such that $\beta(t_{1}) =
\beta(t_{2})$. Note that $t_{1} + 1 < t_{2}$ because $m$ does not
have variable of $G$-degree $e$. So, $m^{[t_{1}+1,t_{2}]}$ satisfies
the thesis statement of the corollary.
\end{proof}

\begin{lemma}\label{tec4}
Let $S$ be a multiset formed by elements of
$\widehat{(\sum_{i=1}^{s} (s - 1)^{i})}$. If $|S| \geq (s +
1)(\sum_{i=1}^{s} (s - 1)^{i}) + 1$, then there exists $i \in
\widehat{(\sum_{i=1}^{s} (s - 1)^{i})}$ such that this positive
integer repeats, at least, $s + 2$ times in $S$.
\end{lemma}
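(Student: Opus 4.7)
The plan is to argue by contradiction using a straightforward double-counting. Write $N = \sum_{i=1}^{s}(s-1)^{i}$ so that the ground set is $\widehat{N} = \{1,\ldots,N\}$, and assume $|S| \geq (s+1)N + 1$. Suppose, toward a contradiction, that \emph{every} integer $i \in \widehat{N}$ appears in $S$ at most $s+1$ times (i.e., the negation of the conclusion).

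Then, counting the elements of the multiset $S$ by partitioning according to their value, we obtain
\[
|S| \;=\; \sum_{i=1}^{N} (\text{multiplicity of } i \text{ in } S) \;\leq\; \sum_{i=1}^{N} (s+1) \;=\; (s+1)N.
\]
This contradicts the hypothesis $|S| \geq (s+1)N + 1$, so the supposition fails and some $i \in \widehat{N}$ must repeat at least $s+2$ times in $S$.

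There is really no main obstacle: this is the standard pigeonhole principle in its sharpest multiplicity form, applied to $N$ pigeonholes and $(s+1)N+1$ pigeons with capacity $s+1$ per hole. The only thing worth emphasizing in the write-up is that the particular value $N = \sum_{i=1}^{s}(s-1)^{i}$ plays no role in this lemma itself — this specific choice of $N$ only matters in how the lemma will later be invoked (presumably in the analysis of Type 1-monomials, together with Corollary \ref{tec2} and the constant $\lambda$ introduced at the start of the section).
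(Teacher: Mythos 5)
Your proof is correct and is essentially the paper's own argument: the paper simply states that the lemma ``is the immediate consequence of the Pigeonhole Principle,'' and your double-counting by multiplicities is exactly the standard way to make that one-line claim precise. Your side remark that the specific value $N=\sum_{i=1}^{s}(s-1)^{i}$ is irrelevant to the lemma itself and only matters for its later application is also accurate.
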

\begin{proof}
It is the immediate consequence of the Pigeonhole Principle.
\end{proof}

\begin{lemma}\label{felipe9.42}
Let $m = x_{1}\cdots x_{r}$ be a multilinear Type 1-monomial without
variables of $G$-degree $e$. If the ordinary degree of $m$ is
greater than or equal to $\lambda$, then it is a Type 2-monomial.
\end{lemma}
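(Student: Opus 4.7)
The plan is a two-level pigeonhole argument: first cut $m$ into many short subwords of $G$-degree $e$, and then find two such subwords with matching structure that can play the role of the two blocks in the Type 2 definition. Throughout, write $N := \sum_{i=1}^{s}(s-1)^{i}$, so $\lambda = (s+1)[(s+1)N+1]$, and let $\beta(t) := \alpha(x_{i_{1}}\cdots x_{i_{t}})$ denote the running prefix $G$-degree, with $\beta(0) = e$.

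Since $r \ge \lambda$, I partition the first $(s+1)[(s+1)N+1]$ positions of $m$ into $M := (s+1)N + 1$ disjoint consecutive windows $W_{1}, \ldots, W_{M}$, each of $s+1$ variables; explicitly, $W_{j}$ spans positions $(j-1)(s+1)+1, \ldots, j(s+1)$. Inside a fixed window $W_{j}$, the $s+1$ values $\beta(t)$ for $t \in W_{j}$ all lie in $Supp_{G}(R)$ (since $m$ is Type 1), a set of size $s$. Pigeonhole gives $t_{1}^{(j)} < t_{2}^{(j)}$ in $W_{j}$ with $\beta(t_{1}^{(j)}) = \beta(t_{2}^{(j)})$, so the subword $B_{j} := m^{[t_{1}^{(j)}+1,\, t_{2}^{(j)}]}$ has $G$-degree $e$, lies inside $W_{j}$, and has length between $2$ and $s$ (at least $2$ because $m$ has no variables of $G$-degree $e$, at most $s$ because $|W_{j}| = s+1$).

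To each $B_{j}$ I attach the pair $(h(B_{j}),\, \beta(t_{1}^{(j)}))$. The $h$-sequence is a non-empty tuple over $Supp_{G}(R)\setminus\{e\}$ of length at most $s$ and so takes at most $N$ values, while $\beta(t_{1}^{(j)}) \in Supp_{G}(R)$ takes at most $s$ values, so these pairs range over at most $Ns$ possibilities. Since $M = (s+1)N + 1 = Ns + N + 1 > Ns$, a second pigeonhole yields indices $i < j$ with identical attached pairs.

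Writing $B_{i} = m^{[a_{i}, b_{i}]}$ and $B_{j} = m^{[a_{j}, b_{j}]}$, equal $h$-sequences force equal lengths $a + 1 := b_{i} - a_{i} + 1 = b_{j} - a_{j} + 1$ with $a \ge 1$. Disjointness of the windows together with $a_{k} \ge (k-1)(s+1) + 2$ gives $b_{i} \le i(s+1) < (j-1)(s+1) + 2 \le a_{j}$, so $b_{i} + 2 \le a_{j}$. Setting $p_{1} := a_{i}$ and $p_{2} := a_{j}$, the chain $1 \le p_{1} < p_{1} + a = b_{i} < p_{2} = a_{j} < p_{2} + a = b_{j} \le r$ holds. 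Finally, since both $B_{i}$ and $B_{j}$ are $e$-degree and share the same starting $\beta$-value, $\beta(b_{i}) = \beta(a_{i} - 1) = \beta(a_{j} - 1)$, which yields $\alpha(m^{[b_{i}+1,\, a_{j}-1]}) = \beta(b_{i})^{-1}\beta(a_{j}-1) = e$. All the conditions defining a Type 2-monomial are thereby met.

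The main obstacle is calibrating the data: one has to pick windows of length exactly $s+1$ (smaller windows need not contain any $e$-subword; larger windows admit $e$-subwords whose $h$-sequences range over a set strictly larger than $N$), and one has to attach the starting $\beta$-value --- not merely the $h$-sequence --- to each block, so that the \emph{middle} piece between the two matching blocks is automatically forced to be of $G$-degree $e$. Once both choices are in place, the arithmetic inequality $(s+1)N + 1 > Ns$ closes the argument.
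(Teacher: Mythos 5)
Your proof is correct and takes essentially the same route as the paper's: partition the first $\lambda$ positions into $(s+1)N+1$ windows of length $s+1$, extract from each window an $e$-degree block of length at most $s$ by the prefix-degree pigeonhole (this is Corollary \ref{tec2}), and then match two blocks so that the piece between them also has $G$-degree $e$. The only cosmetic difference is that you apply a single pigeonhole to the pairs consisting of the $h$-sequence and the prefix degree, whereas the paper first finds $s+2$ blocks sharing an $h$-sequence (Lemma \ref{tec4}) and then pigeonholes a second time on the prefix degrees; both countings fit within the same $\lambda$.
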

\begin{proof}
Let $m$ be a multilinear Type 1-monomial of $K\langle X \rangle$
without variables of $G$-degree $e$ whose ordinary degree is greater
than or equal to $(s+1)^{2}(\sum_{i=1}^{s} (s - 1)^{i}) + (s + 1)$
and $a$ denotes the number $(s+1)(\sum_{i=1}^{s} (s - 1)^{i}) + 1$.

Let:
\begin{center}
$m_{1} = m^{[1,s+1]}, m_{2} = m^{[s+2, 2(s + 1)]}, \cdots , m_{a} =
m^{[(a-1)(s+1) + 1,a.(s+1)]}$.
\end{center}
By Corollary \ref{tec2}, for each $m_{i}$, there is a proper subword
of $G$-degree $e$ and ordinary degree less than or equal to $s$.

Let $\gamma: \{m_{1},\cdots,m_{a}\} \rightarrow \mathcal{A}$ be a
relation that assigns: $(g_{1},\cdots,g_{n_{1}}) \in \gamma(m_{i})$
if, and only if, there exists a subword of $m_{i}$ of ordinary
degree $n_{1}$, $m_{i,1}$, such that $h(m_{i,1}) =
(g_{1},\cdots,g_{n_{1}})$. By Lemma \ref{tec4}, there exist subwords
$m_{i_{1},1},\cdots, m_{i_{s+2},1} \in \newline
\{\gamma(m_{1}),\cdots,\gamma(m_{a})\}$ of $m_{i_{1}},\cdots,
m_{i_{s+2}} (i_{1} < \cdots < i_{s+2})$ such that $h(m_{i_{1},1}) =
\cdots = h(m_{i_{s+2},1})$. By Pigeonhole Principle, there exist
$k,k+l \in \{1,\cdots,s+2\}$ such that the subword of $m$
($m_{i_{k},i_{k+l},1}$), between $m_{i_{k},1}$ and $m_{i_{k+l},1}$,
with $G$-degree $e$.

Therefore, $m_{i_{k},1}m_{i_{k},i_{k+l},1}m_{i_{k+l},1}$ is a Type
2- monomial. So, $m$ is Type 2-monomial as well.
\end{proof}

\begin{definition}
We denote by $U$ the $T_{G}$-ideal generated by the following
identities of $R$:
\begin{center}
$\bullet \ x_{1}x_{2} - x_{2}x_{1} \ \ \mbox{when} \ \ \alpha(x_{1})
= \alpha(x_{2}) = e \
\ (1)$; \\

$\bullet \ x_{1}x_{2}x_{3} - x_{3}x_{2}x_{1} \ \ \mbox{when} \ \
\alpha(x_{1}) = \alpha(x_{3})
= (\alpha(x_{2}))^{-1} \neq e \ \ (2)$; \\

$\bullet \ x_{1} \ \ \mbox{when} \ \ R_{\alpha(x_{1})} = \{0\} \ \ (3)$; \\

$\bullet \ $ The multilinear Type 1- monomial identities whose
ordinary degrees are less than or equal to $\lambda$ \ \ $(4)$.
\end{center}
\end{definition}

Recall that if $m$ is a monomial, then $m \in T_{G}(R)$ if and only
if $m \in T_{G}(M_{n}(K))$ (Remark \ref{silva5}). In the next lemma
we follow an idea of (\cite{Drensky2}, Proposition 4.2).

\begin{lemma}\label{felipe9.45}
Let $m = x_{1}\cdots x_{q}$ be ($q > \lambda$) a multilinear
monomial. If $m$ is a Type 1- monomial identity for $R$, then $m$
follows from $(4)$.
\end{lemma}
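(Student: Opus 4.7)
The strategy is induction on $q = \deg m$. The base case $q \leq \lambda$ is immediate, since any multilinear Type 1-monomial identity of degree at most $\lambda$ is itself listed in $(4)$. Assume $q > \lambda$ and distinguish two cases.

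First, suppose $m$ contains a variable $x_i$ of $G$-degree $e$. Let $\overline{m}$ denote the monomial obtained from $m$ by deleting $x_i$. By Lemma \ref{felipe6.999}, $\overline{m} \in T_G(R)$; it is multilinear, and Type 1 because any subword of $\overline{m}$ has the same $G$-degree as the corresponding subword of $m$ with $x_i$ reinserted (since $\alpha(x_i) = e$ leaves $G$-degrees invariant). As $\deg \overline{m} = q - 1 < q$, the inductive hypothesis gives that $\overline{m}$ follows from $(4)$. The $G$-graded endomorphism $x_{i-1} \mapsto x_{i-1} x_i$ (or $x_{i+1} \mapsto x_i x_{i+1}$ when $i = 1$) sends $\overline{m}$ to $m$, and is $G$-graded precisely because $\alpha(x_i) = e$; hence $m$ also follows from $(4)$.

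Next suppose $m$ has no $e$-degree variables. Lemma \ref{felipe9.42} furnishes a Type 2 decomposition $m = u A v B w$ with $\alpha(A) = \alpha(v) = \alpha(B) = e$ and $h(A) = h(B)$. Let $n = u A v w$. We claim $n$ is a multilinear Type 1-monomial identity of degree $q - (a+1) < q$. Multilinearity is preserved by deletion, and Type 1-ness follows because any subword of $n$ crossing the $v$--$w$ junction has the same $G$-degree as the subword of $m$ obtained by reinserting $B$ (using $\alpha(B) = e$), which lies in $Supp_G(R)$ by hypothesis. For the identity property we use Lemma \ref{felipe0} and Definition \ref{def}: a monomial lies in $T_G(R)$ iff its set of valid starting indices $L_{h(\cdot)}$ is empty. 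Since $\alpha(A) = \alpha(v) = e$, a starting index $k$ arrives at $B$ in $m$ with the same partial product as it arrives at $w$ in $n$, namely $g_k \cdot \prod h(u)$; and since $h(A) = h(B)$, the partial products along $B$ from that common state duplicate the partial products along $A$ already checked in both monomials. Consequently $k$ survives $n$ iff $k$ survives $m$, so $L_{h(n)} = L_{h(m)} = \emptyset$, and $n \in T_G(R)$. The inductive hypothesis then yields $n \in \langle (4) \rangle_{T_G}$.

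To recover $m$ from $n$, let $w_1$ be the first variable of $w$ (if $w$ is empty, substitute into the last variable of $v$, or of $A$ if both $v$ and $w$ are empty), and apply the substitution $w_1 \mapsto B \cdot w_1$. This is a graded endomorphism because $\alpha(B) = e$, and it is well defined because the variables of $B$ were removed when passing to $n$, so no clash arises. Its image on $n$ is exactly $m$, so $m \in \langle (4) \rangle_{T_G}$, completing the induction. The crux of the argument is the identification $L_{h(n)} = L_{h(m)}$, where both Type 2 hypotheses $h(A) = h(B)$ and $\alpha(A) = \alpha(v) = \alpha(B) = e$ are used essentially; everything else is bookkeeping of graded substitutions of the kind used in \cite{Drensky2}.
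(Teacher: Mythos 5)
Your proof is correct and follows essentially the same route as the paper: reduce to the case with no $e$-degree variables via Lemma \ref{felipe6.999}, invoke Lemma \ref{felipe9.42} to obtain the Type 2 block structure, delete one copy of the repeated block, verify that the shorter monomial is still a multilinear Type 1-monomial identity, and recover $m$ by a degree-preserving substitution, inducting on the length. The only cosmetic differences are that you delete the second block $B$ where the paper deletes the first block $A$, and you establish the equivalence $L_{h(n)}=L_{h(m)}$ directly via Lemma \ref{felipe0}, whereas the paper argues the same combinatorial point in contrapositive form by transporting a nonzero evaluation of the shortened monomial to a nonzero evaluation of $m$.
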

\begin{proof}
Let $m = x_{1}\cdots x_{q}$ be a multilinear Type 1-monomial
identity for $R$ where $q \geq \lambda + 1$. We may suppose without
any loss of generality that $\alpha(x_{i}) \neq e$ for all $i \in
\widehat{q}$ (Lemma \ref{felipe6.999}). The proof is made by
induction on $q$. Suppose that $q = \lambda + 1$. According to Lemma
\ref{felipe9.42}, $m$ is a Type 2-monomial.

Here, we use the same notation as in Lemma \ref{felipe9.42}. If
$x_{p_{1}}\cdots x_{p_{2} + a}$ is a graded monomial identity for
$R$, then $m$ is a consequence of the monomial identities of type
$(4)$. Thus, we may suppose that $x_{p_{1}}\cdots x_{p_{2} + a}
\notin T_{G}(R)$. Let $\widehat{m} = m^{[1,p_{1}-1]}m^{[p_{1} + a +
1,q]}$. If $\widehat{m}$ is a monomial identity for $R$, then $m$ is
a consequence of $\widehat{m}$ that is a Type 1-monomial. 
Suppose for contradiction that $\widehat{m}$ is not a polynomial
identity for $R$. We may suppose without loss of generality that
$p_{1} + a + 1 < p_{2}$ and $q \geq p_{2} + a + 1$. Therefore, there
exist the matrix units $e_{l_{1}k_{1}} \in
M_{n}(K)_{\alpha(x_{1})},\cdots, e_{l_{p_{1} -1}k_{p_{1} - 1}} \in
M_{n}(K)_{\alpha(x_{p_{1} - 1})}, e_{l_{p_{1} + a + 1}k_{p_{1} + a +
1}}\in M_{n}(K)_{\alpha(x_{p_{1} + a + 1})},\cdots, e_{l_{q}k_{q}}
\in M_{n}(K)_{\alpha(x_{q})}$ such that $(e_{l_{1}k_{1}}\cdots
e_{l_{p_{1} -1}k_{p_{1} -
1}}).(e_{l_{p_{1}+a+1}k_{p_{1}+a+1}}.\cdots.e_{l_{q}k_{q}}) \neq 0$.

Note that $k_{p_{1} - 1} = l_{p_{1} + a + 1} = k_{p_{2} - 1} =
l_{p_{2}} = k_{p_{2} + a}$. In this form, consider the following
evaluation in $m$: $x_{i} = e_{l_{i}k_{i}} \ \ \forall \ \ i \in
\widehat{q} - \{p_{1},\cdots,p_{1} + a\}, x_{l_{p_{1} + j}k_{p_{1} +
j}} = e_{l_{p_{2} + j}k_{p_{2} + j}} \ \ \forall \ \ j \in
\{0,1,\cdots,a\}$.

Thus $(e_{l_{1}k_{1}}\cdots e_{l_{p_{1}
-1}k_{p_{1}-1}}).(e_{l_{p_{2}}k_{p_{2}}}\cdots e_{l_{p_{2} + a
}k_{p_{2} + a}}).(e_{l_{p_{1} + a + 1}k_{p_{1} + a + 1}}\cdots
e_{l_{q}k_{q}}) \neq 0$.

This is a contradiction, because $m \in T_{G}(R)$. By induction on
$q$, the result follows.
\end{proof}

\begin{lemma}\label{felipe9.55}
If $m$ is a Type 1-monomial identity of $R$, then $m$ follows from
$(4)$.
\end{lemma}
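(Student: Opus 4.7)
The plan is to reduce to the multilinear case already handled by Lemma~\ref{felipe9.45}. Given a Type 1-monomial identity $m$ of $R$, let $\widehat{m}$ be a multilinear monomial obtained from $m$ by replacing every occurrence of each variable by a fresh distinct variable of the same $G$-degree; then $\widehat{m}$ has the same ordinary degree as $m$ and satisfies $h(\widehat{m}) = h(m)$.

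First I would argue that $\widehat{m}$ is itself a Type 1-monomial identity of $R$. Being Type 1 is a condition on the degree sequence and is therefore inherited from $m$. Moreover, by Remark~\ref{silva4}, two monomials with the same $h$-tuple are either both in $T_{G}(R)$ or both outside it, and consequently $\widehat{m} \in T_{G}(R)$.

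Next I would split on the ordinary degree of $\widehat{m}$. If $\deg \widehat{m} \leq \lambda$, then $\widehat{m}$ is itself one of the multilinear Type 1-monomial identities listed in $(4)$. Otherwise $\deg \widehat{m} > \lambda$, and Lemma~\ref{felipe9.45} applies directly to show that $\widehat{m}$ is a $T_{G}$-consequence of $(4)$.

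Finally, $m$ is recovered from $\widehat{m}$ by the $G$-graded endomorphism of $K\langle X \rangle$ that sends each fresh copy back to the original variable it replaced; this endomorphism respects the grading because all copies of a given variable share the same $G$-degree. Since $T_{G}$-ideals are invariant under $G$-graded endomorphisms, $m \in \langle \widehat{m} \rangle_{T_{G}} \subseteq \langle (4) \rangle_{T_{G}}$, as required. The substantive point, which would otherwise be genuinely delicate, is the first step: for a general algebra a monomial can be an identity while its multilinearization is not, but for the generic algebra $R$ this pathology is ruled out by Remark~\ref{silva4}, where being a graded identity depends only on the degree sequence.
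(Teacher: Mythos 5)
Your proposal is correct and follows essentially the same route as the paper, which proves this lemma by citing exactly Remark~\ref{silva4} (to pass to a multilinear monomial with the same $h$-tuple) and Lemma~\ref{felipe9.45}; you have simply spelled out the details, including the case split on whether the degree exceeds $\lambda$ and the substitution back via a graded endomorphism.
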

\begin{proof}
It follows from Remark \ref{silva4} and Lemma \ref{felipe9.45}.
\end{proof}

\section{The main result}

The next lemma follows an idea of (\cite{Azevedo1}, Lemma 5),
(\cite{Azevedo2}, Lemma 5), (\cite{Vasilovsky 1}, Lemma 4) and
(\cite{Silva}, Lemma 4.6).

\begin{lemma}\label{felipe5.5}
Let $m(x_{1},\cdots,x_{q})$ and $n(x_{1},\cdots,x_{q})$ be two
monomials such that the matrices $n(A_{1},\cdots,A_{q})$ and
$m(A_{1},\cdots, A_{q})$ have in same position the same non-zero
entry. Then:
\begin{center}
$m(x_{1},x_{2},\cdots,x_{q}) \equiv n(x_{1},x_{2},\cdots,x_{q}) \ \
mod \ \ J_{1}$.
\end{center}
\end{lemma}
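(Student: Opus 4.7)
The plan is to prove $m \equiv n \pmod{J_{1}}$ by induction on the common degree $q$, using the walk description of matrix-product entries provided by Lemma \ref{felipe0}. The base cases $q \le 1$ are trivial. For the inductive step, I first invoke Lemma \ref{felipe4} to conclude that $m$ and $n$ are multi-homogeneous of the same type and hence involve exactly the same multiset of variables from $x_{1},\ldots,x_{q}$. Lemma \ref{felipe0} then identifies the shared non-zero entry at some position $(i,j)$ with a walk $i = s_{1} \to s_{2} \to \cdots \to s_{p+1} = j$ through $\widehat{n}$, whose step at slot $\ell$ is labelled by the variable of $m$ at that slot, and similarly for $n$. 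Algebraic independence of the commuting variables $y_{h,i}^{k}$ in $\Omega$ forces the multisets of labelled steps coming from $m$ and from $n$ to coincide: each variable gets the same starting index in both orderings, so $m$ and $n$ are two orderings of the same multiset of directed edges on $\widehat{n}$ consistent with a walk from $i$ to $j$.

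Next I split into cases according to the leading variable. If $m = x_{i}m'$ and $n = x_{i}n'$ start with the same variable, Lemma \ref{felipe5} implies that $m'(A_{1},\ldots,A_{q})$ and $n'(A_{1},\ldots,A_{q})$ still share the same non-zero entry in the corresponding shifted position; by induction $m' \equiv n' \pmod{J_{1}}$, and hence $m \equiv n \pmod{J_{1}}$ after left-multiplying by $x_{i}$.

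The substantive case is when the leading variables differ: write $m = x_{a}\,u\,x_{b}\,v$ and $n = x_{b}\,w$ with $a \neq b$, where $u$ is the possibly empty block of $m$ between $x_{a}$ and the first occurrence of $x_{b}$. The strategy is to rewrite $m$ modulo $J_{1}$ into a monomial $\tilde m$ that starts with $x_{b}$ and whose evaluation still shares with $n(A_{1},\ldots,A_{q})$ the same non-zero entry at the same position; the previous case then finishes the argument. To construct $\tilde m$, I would bubble $x_{b}$ leftward by a secondary induction on its position in $m$, applying at each elementary swap either identity (1), when the relevant degrees are $e$, or identity (2), otherwise. The edge-ordering description makes the required degree conditions transparent: since every variable has the same starting index in both $m$ and $n$, the walk of $m$ must eventually revisit the starting index of $x_{b}$ used by $n$ (which is just $i$), and this revisit forces $\alpha(x_{a}u) = \alpha(x_{b})^{-1}$ together with the matching condition $\alpha(x_{a}) = \alpha(x_{b})$ needed to iterate identity (2), while (1) handles the residual $e$-degree commutations.

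The main obstacle is precisely the verification that the degree hypotheses of (1) or (2) are actually satisfied at each intermediate swap during the bubbling. This is the technical combinatorial heart of the lemma: one must check that the constraint imposed by the coincidence of a single entry, once translated through Lemma \ref{felipe0} into an equality of walks up to re-ordering of edges, is exactly what the restricted rewrites provided by $J_{1}$ can realize. I would organise the verification in a secondary induction that, at each step, reads off the needed inverse-pairing of degrees directly from the walk equalities established at the beginning of the argument.
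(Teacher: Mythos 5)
Your setup is sound and matches the paper's: reduce to multilinear monomials via Lemma \ref{felipe4}, use Lemma \ref{felipe0} and the algebraic independence of the $y_{h,i}^{k}$ to conclude that each variable occupies the same starting index (row) in both evaluations, induct on the degree, and handle the case of a common leading variable by deleting it via Lemma \ref{felipe5}. The gap is exactly where you place it: the case of distinct leading variables. Your plan to ``bubble $x_{b}$ leftward by elementary swaps'' is not realizable with the generators of $J_{1}$. Identity $(1)$ only transposes two \emph{adjacent variables both of degree} $e$, and identity $(2)$ is not an adjacent transposition at all --- it reverses a triple $x_{1}x_{2}x_{3}\mapsto x_{3}x_{2}x_{1}$ under the rigid constraint $\alpha(x_{1})=\alpha(x_{3})=\alpha(x_{2})^{-1}\neq e$, simultaneously displacing the other outer factor. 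There is no reason the required degree hypotheses hold at each single-position move of $x_{b}$, and in general they do not. Moreover your stated degree relation is wrong: since $x_{a}$ and $x_{b}$ both start at row $i$, the prefix $x_{a}u$ is a closed walk at $i$, so $\alpha(x_{a}u)=e$ (not $\alpha(x_{b})^{-1}$), and a degree-$e$ prefix cannot be commuted past $x_{b}v$ unless $\alpha(x_{b}v)=e$ as well.

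The missing idea is to apply $(2)$ once, at the level of \emph{blocks} rather than single variables, after choosing the blocks by a minimality argument. Writing $m_{1}=x_{1}\cdots x_{r}$ and $n_{1}=x_{\sigma(1)}\cdots x_{\sigma(r)}$, take the least $t$ with $\sigma^{-1}(t+1)<\sigma^{-1}(1)\leq\sigma^{-1}(t)$ and set $k_{1}=\sigma^{-1}(t+1)$, $k_{2}=\sigma^{-1}(1)$, $k_{3}=\sigma^{-1}(t)$, so that
$n_{1}=n_{1}^{[1,k_{1}-1]}\,n_{1}^{[k_{1},k_{2}-1]}\,n_{1}^{[k_{2},k_{3}]}\,n_{1}^{[k_{3}+1,r]}$.
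Because $x_{t}$ ends where $x_{t+1}$ begins in the common walk, the first and third blocks both have degree $g_{i_{1}}^{-1}g_{i_{t+1}}$ and the middle block has the inverse degree; substituting these three blocks for the variables of $(2)$ (or of $(1)$ when this degree is $e$) reverses them and produces a monomial congruent to $n_{1}$ modulo $J_{1}$ that begins with $x_{1}$, after which your first case applies. This block decomposition is the combinatorial content you have deferred; without it, the induction does not close.
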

\begin{proof}
Let $m = x_{i_{1}}\cdots x_{i_{r}}$. According to Lemma
\ref{felipe4}, $m-n$ is a multi-homogeneous polynomial. Let $m_{1}$
and $n_{1}$ be two multilinear monomials (with the same variables)
such that $h(m_{1}) = h(m)$ and $h(n_{1}) = h(n)$. Note that, it is
enough to prove:
\begin{center}
$m_{1} \equiv n_{1} \ \ mod \ \ J_{1}$.
\end{center}

We may suppose that $m_{1} = x_{1}\cdots x_{r}$. Therefore, there
exists $\sigma \in S_{r}$ such that $n_{1} = x_{\sigma(1)}\cdots
x_{\sigma(r)}$. By this hypothesis, there is a position $(i,j) \in
\widehat{n}\times \widehat{n}$ such that
$e_{1i}m_{1}(A_{1},\cdots,A_{q})e_{j1} =
e_{1i}n_{1}(A_{1},\cdots,A_{q})e_{j1} \neq 0$.

Suppose that the entry of $m_{1}(A_{1},\cdots,A_{q})$, in the
position $(i,j)$ is: $y_{\alpha(x_{1}),1}^{q_{1}}\cdots
y_{\alpha(x_{r}),r}^{q_{r}}$ where $q_{2},\cdots,q_{r} \in
\widehat{n}$ and $q_{1} = i$. Therefore:
\begin{center}
$e_{q_{1}s_{\alpha(x_{1})}^{q_{1}}}\cdots
e_{q_{r}s_{\alpha(x_{r})}^{q_{r}}} =
e_{q_{\sigma(1)}s_{\alpha(x_{\sigma(1)})}^{q_{\sigma(1)}}}\cdots
e_{q_{\sigma(r)}s_{\alpha(x_{\sigma(r)})}^{q_{\sigma(r)}}} =
e_{ij}$.
\end{center}

In this form, there exist matrix units $e_{i_{1}j_{1}} \in
M_{n}(K)_{\alpha(x_{1})},\cdots, e_{i_{r}j_{r}} \in
M_{n}(K)_{\alpha(x_{r})}$ having the following property:
\begin{center}
$e_{i_{1}j_{1}}\cdots e_{i_{r}j_{r}} =
e_{i_{\sigma(1)}j_{\sigma(1)}}\cdots e_{i_{\sigma(r)}j_{\sigma(r)}}
\neq 0$.
\end{center}
So, $i_{1} = i_{\sigma(1)}$, $j_{r} = j_{\sigma(r)}$ and
$\alpha(m_{1}) = \alpha(n_{1}) = g_{i}^{-1}g_{j}$.

In the following steps, we will be use an induction on $r$. If $r =
1$, the proof is obvious.

\begin{description}
\item Step 1: Suppose that $\sigma(1) = 1$. In this case, the monomials $m_{1}$ and $n_{1}$
start with the same variable. Let $m_{2}$ and $n_{2}$ be two
monomials obtained from $m_{1}$ and $n_{1}$ respectively by deleting
the first variable. By Lemma \ref{felipe5},
$m_{2}(A_{1},\cdots,A_{q})$ and $n_{2}(A_{1},\cdots,A_{q})$ have, in
the same position, the same non-zero entry. Hence, by induction
hypothesis, $m_{2} \equiv n_{2}$ modulo $J$. Consequently, $m_{1}
\equiv n_{1}$ modulo $J_{1}$ as required.
\item Step 2: Suppose that $\sigma(1) > 1$. Let $t$ be the least positive integer
such that $\sigma^{-1}(t+1) < \sigma^{-1}(1)\leq \sigma^{-1}(t)$. We
define: $k_{1} := \sigma^{-1}(t+1)$, $k_{2}:= \sigma^{-1}(1)$ and
$k_{3} = \sigma^{-1}(t)$. Note that: $\sigma(k_{1}) = t + 1$,
$\sigma(k_{2}) = 1$, $\sigma(k_{3}) = t$, $i_{\sigma(c+1)} =
j_{\sigma(c)}$ and $i_{c+1} = j_{c}$  for all $c \in \widehat{r-1}$.
It is clear that:

\begin{center}
$n_{1} = x_{\sigma(1)}\cdots x_{\sigma(r)} =
n_{1}^{[1,k_{1}-1]}n_{1}^{[k_{1},k_{2}-1]}n_{1}^{[k_{2}
,k_{3}]}n_{1}^{[k_{3}+1,r]}$.
\end{center}

Likewise:

\begin{center}
$\alpha(n_{1}^{[1,k_{1}-1]}) = g_{i_{\sigma(1)}}^{-1}g_{j_{\sigma(k_{1}-1)}} = g_{i_{1}}^{-1}g_{i_{\sigma(k_{1})}} = g_{i_{1}}^{-1}g_{i_{t+1}}$; \\
$\alpha(n_{1}^{[k_{1},k_{2}-1]}) = g_{i_{\sigma(k_{1})}}^{-1}g_{j_{\sigma(k_{2}-1)}} = g_{i_{t+1}}^{-1}g_{i_{\sigma(k_{2})}} = g_{i_{t+1}}^{-1}g_{i_{1}}$; \\
$\alpha(n_{1}^{[k_{2},k_{3}]}) =
g_{i_{\sigma(k_{2})}}^{-1}g_{j_{\sigma(k_{3})}} =
g_{i_{1}}^{-1}g_{j_{t}} = g_{i_{1}}^{-1}g_{i_{t+1}}$.
\end{center}
Thus, by the identities $(1)$ and $(2)$, it is possible to conclude
that:
\begin{center}
$n_{1} \equiv
n_{1}^{[k_{2},k_{3}]}n_{1}^{[k_{1},k_{2}-1]}n_{1}^{[1,k_{1}-1]}n_{1}^{[k_{3}+1,r]}
\ \ mod \ \ J_{1}$.
\end{center}
Conclusion: $n_{1}$ is a congruent monomial that starts with the
same variable of $m_{1}$. Repeating the arguments of the first case,
we conclude that $m_{1} \equiv n_{1}$ modulo $J_{1}$.
\end{description}
\end{proof}

\begin{lemma}\label{silva2}
Let $G$ be a finite group of order $n$. Then $R$ does not satisfy a
$G$-graded monomial identity.
\end{lemma}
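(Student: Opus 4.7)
The plan is to exploit Silva's Generic Model together with Lemma \ref{felipe0} in the sharpest possible way. The hypothesis $|G|=n$ combined with the fact that $\overline{g}=(g_1,\ldots,g_n)$ consists of distinct elements of $G$ forces the set equality $G_n=G$. First I would draw the consequence that for every $h\in G$ and every tuple $\textbf{h}=(h_1,\ldots,h_q)\in G^q$, the associated set of indices is the full set $\widehat{n}$: indeed, each $g_k h_1\cdots h_i$ is automatically an element of $G=G_n$.

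Next, given an arbitrary $G$-graded monomial $m=x_{j_1}\cdots x_{j_q}$ with $\alpha(x_{j_l})=h_l$, I would evaluate $m$ by substituting each variable by a generic matrix $A_{i_l}^{h_l}\in R_{h_l}$ of the appropriate degree (assigning the same generic matrix whenever two occurrences correspond to the same variable of $X$). By Lemma \ref{felipe0}, this evaluation equals
$$A_{i_1}^{h_1}\cdots A_{i_q}^{h_q}=\sum_{k=1}^{n}w_{k}\,e_{k,\,s_{q+1}^{k}},\qquad w_{k}=y_{h_1,i_1}^{s_1^{k}}y_{h_2,i_2}^{s_2^{k}}\cdots y_{h_q,i_q}^{s_q^{k}}.$$

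To conclude, I would observe two things. First, each $w_k$ is a nonzero element of $\Omega=K[Y]$, because it is a product of commuting polynomial-ring variables. Second, by Definition \ref{def} we have $s_1^{k}=k$, so the matrix units $e_{k,\,s_{q+1}^{k}}$ appearing in the sum occupy pairwise distinct rows. Consequently the $n$ summands cannot cancel, and the displayed matrix is nonzero in $M_n(\Omega)$. Hence $m$ does not belong to $T_G(R)$.

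There is essentially no obstacle here: all the real work was done in Lemma \ref{felipe0}, and the hypothesis $|G|=n$ is used in exactly one place, namely to guarantee $L_{\textbf{h}}=\widehat{n}$ so that the product contains a contribution for every row. The only point requiring care is the convention for substituting repeated variables, but since the resulting $w_k$'s still lie in disjoint rows and are nonzero, consistency of substitution plays no role in preventing the sum from vanishing.
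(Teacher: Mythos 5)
Your proof is correct, but it takes a genuinely different route from the paper's. The paper first passes from $R$ to $M_{n}(K)$ via Remarks \ref{silva4} and \ref{silva5}, reduces to multilinear monomials, and then argues by induction on the length of the monomial, at each step prepending a matrix unit $e_{ki_{2}}$ with $g_{k}^{-1}g_{i_{2}}=\alpha(x_{1})$ to a nonvanishing product of matrix units; the hypothesis $|G|=n$ enters there, in guaranteeing that the required $g_{k}=g_{i_{2}}\alpha(x_{1})^{-1}$ lies in $G_{n}$. You instead stay inside the generic model and conclude in one step from Lemma \ref{felipe0}: since $G_{n}=G$, the index set $L_{\textbf{h}}$ is all of $\widehat{n}$, each $w_{k}$ is a nonzero monomial of $\Omega=K[Y]$, and the summands $w_{k}e_{k,s_{q+1}^{k}}$ occupy pairwise distinct rows, so the evaluation cannot vanish. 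Both arguments rest on the same underlying fact (namely that $G_{n}=G$ forces every partial product of degrees to remain in $G_{n}$, so every row contributes), but yours replaces the induction and the reduction to multilinear monomials by a single appeal to the closed product formula, and it handles repeated variables directly without invoking Remark \ref{silva4}. Given that Lemma \ref{felipe0} is already available, your version is the more economical of the two.
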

\begin{proof}
According to Remarks \ref{silva4} and \ref{silva5}, it is sufficient
to prove that $M_{n}(K)$ does not satisfy a multilinear monomial
identity $x_{1}.\ldots.x_{l}$.

It is clear that $Supp_{G}(M_{n}(K)) = G$. So, it is enough to prove
that $M_{n}(K)$ does not satisfy a Type-1 multilinear monomial
identity. If $l = 1$, the proof is obvious. The proof is made by
induction on $l$. According to the hypothesis of induction, there
exist matrix units $e_{i_{2}j_{2}} \in (M_{n}(K))_{\alpha(x_{2})},
\cdots,e_{i_{l}j_{l}} \in (M_{n}(K))_{\alpha(x_{l})}$ such that
$e_{i_{2}j_{2}}\cdots e_{i_{l}j_{l}} = e_{i_{2}j_{l}}$. Notice that
there exists $g_{k} \in \{g_{1},\ldots,g_{n}\}$ such that
$g_{k}^{-1}g_{i_{2}} = \alpha(x_{1})$. So, $e_{ki_{2}}\cdots
e_{i_{l}j_{l}} \neq 0$. The proof of Lemma \ref{silva2} is complete.
\end{proof}

\begin{lemma}\label{felipe6}
If $R$ does not satisfy a monomial identity, then $T_{G}(R) =
J_{1}$.
\end{lemma}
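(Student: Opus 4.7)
The inclusion $J_1 \subseteq T_G(R)$ is Lemma \ref{luis1}. For the reverse, I would take an arbitrary $f \in T_G(R)$, and by Lemma \ref{felipe3} together with the standard Vandermonde-type multilinearization (available since $K$ is an infinite integral domain, so $T_G$-ideals are generated by their multilinear components), reduce to the case where $f = \sum_\sigma \lambda_\sigma x_{\sigma(1)} \cdots x_{\sigma(r)}$ is multilinear and all contributing permutations $\sigma$ yield monomials of the same fixed total $G$-degree.

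Evaluate on the generic matrices $A_i = A_i^{\alpha(x_i)}$ of $R$. By Lemma \ref{felipe0}, each $m_\sigma(A_1, \ldots, A_r)$ sits in $M_n(\Omega)$ as a matrix each of whose entries is either $0$ or a single $y$-monomial. Say that $m_\sigma$ and $m_\tau$ are \emph{linked} when their generic evaluations share a common non-zero entry at a common position, and let $\approx$ be the transitive closure of this relation. Lemma \ref{felipe5.5} says that linked monomials are congruent modulo $J_1$, so by transitivity of $\equiv_{J_1}$ we obtain $m \approx n \Longrightarrow m \equiv n \pmod{J_1}$.

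Partition the monomials of $f$ into $\approx$-classes $C_1, \ldots, C_k$, pick representatives $m_j' \in C_j$, and let $\mu_j = \sum_{m \in C_j} \lambda_m$. Then
\[
f \equiv \sum_{j=1}^{k} \mu_j\, m_j' \pmod{J_1}.
\]
Since $J_1 \subseteq T_G(R)$, the right-hand side also lies in $T_G(R)$ and therefore vanishes on the generic matrices. By the very definition of the transitive closure $\approx$, representatives of distinct classes never share a common non-zero $y$-monomial at a common matrix position; hence at each entry of $M_n(\Omega)$ at most one of the $m_j'(A_1, \ldots, A_r)$ contributes a non-zero $y$-monomial.

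The hypothesis that $R$ satisfies no monomial identity ensures each $m_j'$ has a non-zero generic evaluation, and thus exhibits a non-zero $y$-monomial $w_j$ at some position $(p_j, q_j)$. At this position in the equation $\sum_j \mu_j m_j'(A_1, \ldots, A_r) = 0$ only the $j$-th term survives, giving $\mu_j w_j = 0$; since $w_j$ is a non-zero element of the polynomial ring $\Omega = K[Y]$ over the integral domain $K$, it is regular, so $\mu_j = 0$. Because this holds for every $j$, we conclude $f \in J_1$, so $T_G(R) = J_1$. I expect the subtle step to be the use of the transitive closure $\approx$: Lemma \ref{felipe5.5} only provides $J_1$-equivalence for directly linked pairs, and transitivity of $\equiv_{J_1}$ (rather than of the linking relation itself) is what lets the representatives collapse and the generic-evaluation argument isolate each $\mu_j$ cleanly.
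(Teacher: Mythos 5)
Your core argument---evaluating on generic matrices, using Lemma~\ref{felipe0} to see that every entry of a monomial's evaluation is a single $y$-monomial, grouping monomials whose evaluations share a non-zero entry at a common position, applying Lemma~\ref{felipe5.5} to collapse each group modulo $J_1$, and then using the hypothesis that no monomial is an identity to isolate the surviving coefficients---is exactly the paper's argument; the paper merely packages it as a minimal-counterexample induction on the number of summands rather than as a partition into $\approx$-classes. However, two steps are wrong as written. First, the reduction to \emph{multilinear} $f$ via ``Vandermonde-type multilinearization'' is not available: over an infinite integral domain of positive characteristic (e.g.\ $K=\mathbb{F}_p[t]$), $T_G$-ideals are generated by their multi-homogeneous components (Lemma~\ref{felipe3}) but not in general by their multilinear components, so one cannot recover $f$ from its multilinearizations. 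Fortunately nothing in your argument actually uses multilinearity---Lemmas~\ref{felipe0}, \ref{felipe4} and \ref{felipe5.5} apply to arbitrary monomials---so you should simply stop at the multi-homogeneous reduction, exactly as the paper does.

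Second, from ``representatives of distinct classes never share a common non-zero $y$-monomial at a common position'' you infer that at each position \emph{at most one} $m_j'(A_1,\dots,A_r)$ contributes a non-zero entry; that does not follow, since two unlinked representatives may both be non-zero at the same position, only with \emph{different} $y$-monomials there. The conclusion survives: at the position $(p_j,q_j)$ the corresponding entry of $\sum_{j'}\mu_{j'}m_{j'}'(A_1,\dots,A_r)$ is a $K$-linear combination of pairwise distinct monomials of $\Omega=K[Y]$, and distinct monomials are $K$-linearly independent in $K[Y]$ (no integral-domain hypothesis is even needed here), so $\mu_j=0$ for every $j$. With these two repairs your proof is a correct rendering of the paper's.
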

\begin{proof}
Suppose for contradiction there exists $f(x_{1},\cdots,x_{t}) =
\sum_{i=1}^{l} \lambda_{i}m_{i} \in T_{G}(R) - J_{1}$, where for all
$i \in \widehat{l}$: $\lambda_{i} \in K-\{0\}$, $m_{i}$ is a
monomial. According to Lemma \ref{felipe3}, we may suppose that $f$
is a multi-homogeneous polynomial. Moreover, it can be supposed that
$l$ is the least positive integer with the following set:
\begin{center}
$B = \{q \in \mathbb{N}| \sum_{i=1}^{q}
\gamma_{i}n_{i}(x_{1},\cdots,x_{t}) \in T_{G}(R) - J_{1}; \gamma_{i}
\in K - \{0\} \ \mbox{for all} \ i \in \widehat{q}\}$.
\end{center}

It is clear that $m_{i}(A_{1},\cdots,A_{t}) \neq 0$, for $i =
1,\cdots,l$, because $R$ does not satisfy a monomial identity.
Furthermore, there exists $k \in \{2,\cdots,l\}$ such that:
\begin{center}
$m_{1}(A_{1},\cdots,A_{t})$ and $m_{k}(A_{1},\cdots,A_{t})$
\end{center}
have, in the same position the same non-zero entry. Thus by Lemma
\ref{felipe5.5}, it follows that $m_{1} \equiv m_{k}$ modulo
$J_{1}$. Consequently, $h = f + \lambda_{k}(m_{1} - m_{k})  \in
T_{G}(R) - J_{1}$. The contraction, in addition the number of
non-zero summands in $h$ is less than $l$.
\end{proof}

\begin{corollary}
Let $K$ be an infinite integral domain. The $\mathbb{Z}_{n}$-graded
polynomial identities of $M_{n}(K)$ follow from:
\begin{center}
$\bullet \ x_{1}x_{2} - x_{2}x_{1} \ \ \mbox{when} \ \ \alpha(x_{1})
= \alpha(x_{2}) = \overline{0} \
\ (1)$; \\

$\bullet \ x_{1}x_{2}x_{3} - x_{3}x_{2}x_{1} \ \ \mbox{when} \ \
\alpha(x_{1}) = \alpha(x_{3}) = - (\alpha(x_{2})) \neq \overline{0}
\ \ (2)$.
\end{center}
\end{corollary}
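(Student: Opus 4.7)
The plan is to assemble the earlier results in three moves: identify the support, invoke the no-monomial-identity lemma for finite groups, and then apply the passage from $R$ to $M_n(K)$.

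First I would observe that the $\mathbb{Z}_n$-canonical grading on $M_n(K)$ is induced by $(\overline{1},\overline{2},\ldots,\overline{n})$, so the degree of $e_{ij}$ is $\overline{j}-\overline{i}$, and as $(i,j)$ ranges over $\widehat{n}\times\widehat{n}$ these differences exhaust $\mathbb{Z}_n$. Hence $\mathrm{Supp}_{\mathbb{Z}_n}(M_n(K))=\mathbb{Z}_n$, and by the remark following the definition of generic matrices $\mathrm{Supp}_{\mathbb{Z}_n}(R)=\mathbb{Z}_n$ as well. Consequently the identity $(3)$ of Lemma \ref{luis1} (the one that kills variables whose degree lies outside the support) is vacuous in this setting, so the $T_G$-ideal $J$ collapses to $J_1$, the $T_G$-ideal generated by $(1)$ and $(2)$.

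Next, since $\mathbb{Z}_n$ is a finite group of order $n$, Lemma \ref{silva2} applies and tells us that $R$ does not satisfy any $\mathbb{Z}_n$-graded monomial identity. Feeding this into Lemma \ref{felipe6} yields immediately that $T_{\mathbb{Z}_n}(R)=J_1$. Finally, because $K$ is an infinite integral domain, Lemma \ref{felipe2} gives $T_{\mathbb{Z}_n}(M_n(K))=T_{\mathbb{Z}_n}(R)$. Chaining these equalities produces $T_{\mathbb{Z}_n}(M_n(K))=J_1$, which is exactly the claim of the corollary (rewriting $g^{-1}$ additively as $-g$ for $g\in\mathbb{Z}_n$).

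There is essentially no obstacle here; the statement is a straightforward specialization of the main theorem packaged in Lemma \ref{felipe6} together with Lemma \ref{felipe2}. The only point requiring care is verifying that $(3)$ contributes nothing, i.e.\ confirming that the $\mathbb{Z}_n$-canonical grading has full support so that $J=J_1$; everything else is a direct quotation of already established results.
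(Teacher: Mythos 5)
Your argument is correct and is essentially the paper's own proof: the paper derives this corollary directly from Lemma \ref{silva2} (no monomial identities for a finite group of order $n$) and Lemma \ref{felipe6} ($T_{G}(R)=J_{1}$ in that case), with Lemma \ref{felipe2} supplying the passage from $R$ to $M_{n}(K)$. Your additional remark that the support is all of $\mathbb{Z}_{n}$, so that $(3)$ is vacuous, is a correct gloss on why only $(1)$ and $(2)$ appear.
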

\begin{proof}
It follows from Lemmas \ref{silva2} and \ref{felipe6}.
\end{proof}

Following word for word the work of Vasilovsky in \cite{Vasilovsky
2} (see Lemma 1, Lemma 3 and Corollary 4), we have the following
lemma:

\begin{lemma}
Let $K$ be an infinite integral domain and $m = x_{1}\cdots x_{l}$
be a multilinear Type 1-monomial. Then $m \notin
T_{\mathbb{Z}}(M_{n}(K))$.
\end{lemma}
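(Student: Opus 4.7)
The plan is to produce an explicit $\mathbb{Z}$-graded substitution of $m$ in $M_n(K)$ whose value is a single non-zero matrix unit. Concretely, I will assign to each variable $x_k$ a matrix unit $e_{i_k,i_{k+1}}$ of the correct $\mathbb{Z}$-degree $\alpha(x_k)$, so that the product telescopes to $e_{i_1,i_{l+1}} \neq 0$ and therefore $m$ cannot vanish on all of $M_n(K)$.

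Set $S_0 = 0$ and $S_k = \alpha(x_1) + \cdots + \alpha(x_k)$ for $1 \leq k \leq l$. For the $\mathbb{Z}$-canonical grading, $Supp_{\mathbb{Z}}(M_n(K)) = \{-(n-1), \ldots, 0, \ldots, n-1\}$, and the $\mathbb{Z}$-degree of the subword $x_p \cdots x_q$ is $S_q - S_{p-1}$. The Type 1 hypothesis therefore translates into the uniform bound $|S_b - S_a| \leq n-1$ for all $0 \leq a < b \leq l$. Writing $m_0 = \min_{0 \leq k \leq l} S_k$ and $M_0 = \max_{0 \leq k \leq l} S_k$, one has $m_0 \leq 0 \leq M_0$ and $M_0 - m_0 \leq n-1$. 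Choose $i_1 = 1 - m_0$ and $i_{k+1} = i_1 + S_k$; then $1 \leq i_j \leq n$ for every $j \in \widehat{l+1}$, and $i_{k+1} - i_k = \alpha(x_k)$, so $e_{i_k,i_{k+1}}$ is indeed a homogeneous element of $\mathbb{Z}$-degree $\alpha(x_k)$.

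Substituting $x_k = e_{i_k,i_{k+1}}$ yields $e_{i_1,i_2} e_{i_2,i_3} \cdots e_{i_l,i_{l+1}} = e_{i_1,i_{l+1}} \neq 0$, hence $m \notin T_{\mathbb{Z}}(M_n(K))$. The only delicate point is the first observation: recognising that the Type 1 condition is exactly the statement that the partial sums $\{S_k\}_{k=0}^{l}$ fit inside an interval of length at most $n-1$, so that the admissible window $[1-m_0,\, n-M_0]$ for $i_1$ is non-empty and the construction is forced. This is precisely the bookkeeping carried out in Lemmas~1 and 3 of \cite{Vasilovsky 2}; the ``word for word'' remark in the statement refers to transcribing those partial-sum arguments into the present setting, the infiniteness of the integral domain $K$ playing no role here since the evaluation uses only matrix units with coefficient $1$.
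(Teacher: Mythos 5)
Your proof is correct and is essentially the same argument the paper relies on: the paper gives no written proof but defers to Lemmas 1 and 3 and Corollary 4 of \cite{Vasilovsky 2}, which is precisely the partial-sum bookkeeping you carry out explicitly (the Type 1 condition confines all partial sums $S_k$ to a window of length at most $n-1$, making the telescoping matrix-unit substitution available). Nothing further is needed.
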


\begin{corollary}\label{felipe7}
Let $K$ be an infinite integral domain and $m$ be a Type 1-monomial.
Then $m \notin T_{\mathbb{Z}}(M_{n}(K))$.
\end{corollary}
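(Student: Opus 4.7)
My plan is to evaluate $m$ on generic matrices in $R$ and apply Lemma \ref{felipe0} directly, rather than trying to reduce to the multilinear case. The reason for avoiding a multilinearization reduction is that for monomial identities the implication goes the wrong way: if $m'$ denotes the multilinearization of $m$, one only has $m' \in T_{\mathbb{Z}}(M_{n}(K)) \Rightarrow m \in T_{\mathbb{Z}}(M_{n}(K))$, not the converse, so the preceding lemma applied to $m'$ cannot be pulled back to $m$ through that implication.

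By Lemma \ref{felipe2} it suffices to prove $m \notin T_{\mathbb{Z}}(R)$. Write $m = x_{i_{1}} \cdots x_{i_{l}}$, let $x_{j_{1}}, \dots, x_{j_{t}}$ be the distinct variables of $m$, and for each $c \in \widehat{t}$ pick a generic matrix $A_{c} \in R$ of $\mathbb{Z}$-degree $\alpha(x_{j_{c}})$. With $h_{r} := \alpha(x_{i_{r}})$, Lemma \ref{felipe0} yields
\begin{center}
$m(A_{1}, \dots, A_{t}) = \sum_{k \in L} w_{k}\, e_{s_{1}^{k},\, s_{l+1}^{k}}$
\end{center}
in $M_{n}(\Omega)$, where each $w_{k}$ is a monomial in the commuting generators $y$ of $\Omega = K[Y]$, and is therefore nonzero because $K$ is an integral domain.

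The observation specific to the $\mathbb{Z}$-canonical grading (defining tuple $(1, 2, \dots, n)$) is that $s_{r}^{k}$ equals $k + h_{1} + \cdots + h_{r-1}$ viewed as an integer. In particular $s_{1}^{k} = k$, so the positions $(s_{1}^{k}, s_{l+1}^{k})$ appearing in the displayed sum are pairwise distinct as $k$ varies over $L$. This distinct-positions point is, in my view, the main content of the proof: it is what prevents cancellation among the summands and hence what makes the argument survive the presence of repeated variables in $m$.

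To conclude $m(A_{1}, \dots, A_{t}) \neq 0$ it remains to check $L \neq \emptyset$. The cleanest route is to invoke the preceding lemma on the multilinearization $m'$ of $m$: since $h(m') = h(m)$, the monomial $m'$ is multilinear and Type 1, so by that lemma $m' \notin T_{\mathbb{Z}}(M_{n}(K))$, and its generic evaluation, again described by Lemma \ref{felipe0}, therefore forces $L_{h(m')} = L_{h(m)}$ to be nonempty. Combining this with the previous paragraph, $m(A_{1}, \dots, A_{t})$ is a nonzero element of $R$, so $m \notin T_{\mathbb{Z}}(R) = T_{\mathbb{Z}}(M_{n}(K))$, which is the desired statement.
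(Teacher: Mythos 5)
Your proof is correct and is essentially the paper's own route: the corollary is intended to follow from the preceding multilinear lemma together with Remark \ref{silva4}, and the ``distinct positions'' observation you present as the main content (the summands of Lemma \ref{felipe0} occupy pairwise distinct rows since $s_{1}^{k}=k$, so a monomial vanishes on generic matrices iff $L=\emptyset$, a condition depending only on $h(m)$) is precisely a re-derivation of that remark rather than a new idea. Your opening worry that the multilinearization reduction ``goes the wrong way'' is moot here: for monomials the equivalence is two-sided, which is exactly what Remark \ref{silva4} asserts and what your argument proves.
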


Following word for word the proof of Lemma \ref{felipe6}, we have
the following lemma:

\begin{lemma}\label{silva3}
If $R$ does not satisfy a Type 1-monomial identity, then $T_{G}(R) =
J$.
\end{lemma}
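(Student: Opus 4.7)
The plan is to mimic the proof of Lemma \ref{felipe6} almost verbatim, with $J_{1}$ replaced by $J$ throughout, and with one extra preliminary reduction that exploits identity $(3)$ together with Lemma \ref{luis4} to eliminate any summand that is not a Type 1-monomial. Suppose for contradiction that some $f = \sum_{i=1}^{l} \lambda_{i} m_{i} \in T_{G}(R) \setminus J$ exists with $\lambda_{i} \in K\setminus\{0\}$, where $l$ is chosen as small as possible over all such representatives. By Lemma \ref{felipe3} I may further assume that $f$ is multi-homogeneous.

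First I would argue that each $m_{i}$ may be taken to be a Type 1-monomial. If some $m_{i}$ is not of Type 1, then it contains a non-empty subword whose $G$-degree $h$ satisfies $R_{h} = \{0\}$; evaluating at any homogeneous tuple of $R$ therefore kills the whole monomial, so $m_{i} \in T_{G}(R)$, and Lemma \ref{luis4} immediately yields $m_{i} \in J$. Removing all such terms from $f$ either collapses $f$ into $J$ (contradicting $f \notin J$) or produces a representative of the same class modulo $J$ with strictly fewer non-zero summands, contradicting the minimality of $l$.

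Now, by the standing hypothesis, $R$ satisfies no Type 1-monomial identity, so $m_{i}(A_{1},\ldots,A_{t}) \neq 0$ for each $i$ on an appropriate tuple of generic matrices. The equality $\sum_{i} \lambda_{i} m_{i}(A_{1},\ldots,A_{t}) = 0$ in $M_{n}(\Omega)$ then forces cancellation: fixing a position $(p,q)$ where $m_{1}(A_{1},\ldots,A_{t})$ carries a non-zero entry (which by Lemma \ref{felipe0} is a single monomial in the $y$-variables), the coefficient of that $y$-monomial in position $(p,q)$ of $f(A_{1},\ldots,A_{t})$ must vanish, so some $k \in \{2,\ldots,l\}$ provides $m_{k}(A_{1},\ldots,A_{t})$ with the very same non-zero entry in the same position. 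Lemma \ref{felipe5.5} then gives $m_{1} \equiv m_{k} \pmod{J_{1}}$, and since $J_{1} \subset J$ also $m_{1} \equiv m_{k} \pmod{J}$. Consequently $f + \lambda_{k}(m_{1} - m_{k}) \in T_{G}(R) \setminus J$ has strictly fewer non-zero summands than $f$, the desired contradiction.

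The main obstacle is the preliminary reduction to Type 1-monomials, since it is the sole genuinely new ingredient compared to Lemma \ref{felipe6}; it turns on the combination of the evaluation argument (a non-Type 1 monomial has a subword of degree outside $Supp_{G}(R)$ and hence is itself an identity) with Lemma \ref{luis4} (which packages $(3)$ as generating all monomial identities outside the support). Once this step is performed, the remainder of the proof is a line-by-line transcription of the argument of Lemma \ref{felipe6}, with $J_{1}$ systematically replaced by $J$.
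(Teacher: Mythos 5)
Your proof is correct and follows essentially the same route as the paper, which simply instructs the reader to repeat the proof of Lemma \ref{felipe6} word for word with $J_{1}$ replaced by $J$. Your explicit preliminary reduction --- discarding any non-Type 1 summand via identity $(3)$ and Lemma \ref{luis4} so that the hypothesis then guarantees $m_{i}(A_{1},\ldots,A_{t}) \neq 0$ for the remaining terms --- is precisely the step the paper leaves implicit, and it is what makes the transcription of Lemma \ref{felipe6} legitimate.
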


\begin{corollary}
Let $K$ be an infinite integral domain. The $\mathbb{Z}$-graded
polynomial identities of $M_{n}(K)$ follow from:
\begin{center}
$\bullet \ x_{1}x_{2} - x_{2}x_{1} \ \ \mbox{when} \ \ \alpha(x_{1})
= \alpha(x_{2}) = 0 \
\ (1)$; \\

$\bullet \ x_{1}x_{2}x_{3} - x_{3}x_{2}x_{1} \ \ \mbox{when} \ \
\alpha(x_{1}) = \alpha(x_{3})
= - (\alpha(x_{2})) \neq 0  \ \ (2)$; \\

$\bullet \ x_{1} \ \ \mbox{when} \ \ |\alpha(x_{1})| \geq n \ \
(3)$.
\end{center}
\end{corollary}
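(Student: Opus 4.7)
The plan is to assemble the three results that have already been proved in the preceding lemmas and corollaries, since every conceptual step is in place.

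First I would identify the support. With the canonical $\mathbb{Z}$-grading induced by $\overline{g} = (1, 2, \ldots, n)$, the matrix unit $e_{ij}$ has degree $j - i$, so $\mathrm{Supp}_{\mathbb{Z}}(M_n(K)) = \{-(n-1), -(n-2), \ldots, 0, \ldots, n-1\}$. Consequently $R_h = \{0\}$ if and only if $|h| \geq n$, so the third identity in the statement coincides exactly with identity $(3)$ in the definition of the $T_G$-ideal $J$. The first two identities in the statement also match $(1)$ and $(2)$ of $J$ verbatim for $G = \mathbb{Z}$. Hence $J$ is precisely the $T_{\mathbb{Z}}$-ideal generated by the three listed polynomials.

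Next I would note that Corollary \ref{felipe7} asserts that no Type 1-monomial is a $\mathbb{Z}$-graded identity of $M_n(K)$. Combined with Lemma \ref{felipe2}, which gives $T_{\mathbb{Z}}(R) = T_{\mathbb{Z}}(M_n(K))$, this tells us that $R$ does not satisfy any Type 1-monomial identity. The hypothesis of Lemma \ref{silva3} is therefore fulfilled, and we conclude $T_{\mathbb{Z}}(R) = J$.

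Finally, another application of Lemma \ref{felipe2} yields
\[
T_{\mathbb{Z}}(M_n(K)) \;=\; T_{\mathbb{Z}}(R) \;=\; J,
\]
which is exactly what the corollary claims. There is no genuine obstacle to overcome here: all the hard work (the Type 1 reductions, Silva's generic model argument, and the Vasilovsky-style non-identity result for Type 1 monomials) has been carried out earlier in the paper. The only thing one must be careful with is the bookkeeping confirming that $R_{\alpha(x_1)} = \{0\}$ is equivalent to $|\alpha(x_1)| \geq n$ under the canonical grading, so that the statement's identity $(3)$ truly agrees with the generating identity of $J$ used in Lemma \ref{silva3}.
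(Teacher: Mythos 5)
Your proposal is correct and follows essentially the same route as the paper, which simply cites Corollary \ref{felipe7} together with Lemma \ref{silva3}; your additional bookkeeping (identifying the support under the canonical $\mathbb{Z}$-grading and invoking Lemma \ref{felipe2} to pass between $R$ and $M_{n}(K)$) just makes explicit the steps the paper leaves implicit.
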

\begin{proof}
It follows from Corollary \ref{felipe7} and Lemma \ref{silva3}.
\end{proof}

Now, we present the main result of this paper.

\begin{theorem}
Let $G$ be an arbitrary group. Then $T_{G}(R) = U$. If $K$ is an
infinite integral domain, then $T_{G}(M_{n}(K)) = U$.
\end{theorem}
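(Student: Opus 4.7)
The plan is to prove the two inclusions $U \subseteq T_G(R)$ and $T_G(R) \subseteq U$ separately; the second assertion of the theorem then follows immediately from Lemma \ref{felipe2}, which identifies $T_G(M_n(K))$ with $T_G(R)$ over any infinite integral domain. The inclusion $U \subseteq T_G(R)$ is essentially given: identities $(1),(2),(3)$ are graded identities for $R$ by Lemma \ref{luis1}, and $(4)$ is by construction a family of Type 1-monomial identities of $R$.

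For the harder direction $T_G(R) \subseteq U$, my strategy has two stages: a cleanup step that peels off monomials already known to lie in $U$, followed by a minimal counterexample argument modelled on Lemma \ref{felipe6}. Given $f \in T_G(R)$, I first apply Lemma \ref{felipe3} to assume $f$ is multi-homogeneous, and write $f = \sum_{i=1}^{l} \lambda_i m_i$ with distinct monomials $m_i$ of a common multi-degree and $\lambda_i \in K\setminus\{0\}$. I sort each $m_i$ into one of three categories. If $m_i$ is not a Type 1-monomial, Lemma \ref{luis4} puts it in the $T_G$-ideal generated by $(3)$, hence in $U$. If $m_i$ is a Type 1-monomial and also lies in $T_G(R)$, Lemma \ref{felipe9.55} puts it in the $T_G$-ideal generated by $(4)$, hence in $U$. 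Subtracting these two classes of summands from $f$ (which changes $f$ only by an element of $U$), I reduce to the case in which every remaining $m_i$ is a Type 1-monomial that is \emph{not} a graded identity for $R$.

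At that point I would mimic the proof of Lemma \ref{felipe6}. Assume for contradiction that such an $f$ lies in $T_G(R)\setminus U$ and pick one with the fewest non-zero summands $l$. Since no $m_i$ is an identity, $m_1(A_1,\ldots,A_t)$ carries some non-zero entry $w$ (a monomial in the $y$-variables) at some position $(a,b)$; because $f(A_1,\ldots,A_t) = 0$, cancellation forces some $k \neq 1$ such that $m_k(A_1,\ldots,A_t)$ carries the same entry $w$ at $(a,b)$. Lemma \ref{felipe5.5} then gives $m_1 \equiv m_k \bmod J_1$, and since $J_1 \subseteq U$, the polynomial $f - \lambda_1(m_1-m_k)$ still lies in $T_G(R)\setminus U$ but has strictly fewer non-zero terms, contradicting minimality.

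The step requiring the most care is the cleanup reduction: one must check that after subtracting the non-Type-1 and identity-Type-1 summands, the remaining sum is still of the form the minimality argument demands, and that the coefficient-merging step $\lambda_k \mapsto \lambda_1 + \lambda_k$ does not push any monomial out of the ``good'' class of Type 1 non-identities. Both facts hold because the classification depends only on the individual monomial, and the reduction only alters coefficients of monomials already present. Apart from this bookkeeping, the proof is a direct assembly of the previously established lemmas.
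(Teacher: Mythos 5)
Your proposal is correct and follows essentially the same route as the paper: the paper likewise disposes of monomial identities via Lemmas \ref{luis4} and \ref{felipe9.55} (so that every monomial identity of $R$ lies in the $T_G$-ideal generated by $(3)$ and $(4)$) and then reruns the minimal-counterexample argument of Lemma \ref{felipe6} with $J_1$ replaced by $U$. Your explicit cleanup step and the remark that coefficient-merging preserves the class of Type 1 non-identities are just the details the paper leaves implicit in the phrase ``imitate the proof of Lemma \ref{felipe6}.''
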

\begin{proof}
According to Lemmas \ref{luis4} and \ref{felipe9.55}, if $m$ is a
monomial identity of $R$, then $m$ is consequence of $(3)$ or $(4)$.
Thus, it is sufficient to imitate the proof of Lemma \ref{felipe6}
and replace $J_{1}$ with $U$, early in the proof.
\end{proof}

\section{Matrix-units graded identities of $M_{n}(K)$ over an infinite integral domain}

The algebra $M_{n}(K)$ has a natural grading by $MU_{n}$, the
semigroup of matrix units of class $n$.

\begin{definition}
Let $MU_{n} = \{(i,j) \in \widehat{n}\times \widehat{n} \}\cup
\{0\}$ denote the semigroup of matrix units of class $n$ whose
multiplication is defined as follows:
\begin{description}
\item $0.(i,j) = (i,j).0 = 0$;
\item $(i,j)(k,l) = (i,l)$ when $j = k$;
\item $(i,j)(k,l) = 0$ when $j\neq k$.
\end{description}
\end{definition}

Let $x_{0}$ and $x_{ij},y_{ij},z_{ij}$ denote the free variables
whose $MU_{n}$-degree are $0$ and $(i,j)$, respectively. The
following theorem addresses this issue:

\begin{theorem}(\cite{Drensky2}, Theorem 4.9)
Let $K$ be a field of characteristic zero. Then, the $MU_{n}$-graded
identities of $M_{n}(K)$ follow from:
\begin{center}
$x_{ii}y_{ii} - y_{ii}x_{ii} \ \ \mbox{when} \ \ i \in \widehat{n} \ \ (5)$; \\
$x_{ij}y_{ji}z_{ij} - z_{ij}y_{ji}x_{ij} \ \ \mbox{when} \ \ 1\leq i,j \leq n \ \ i\neq j \ \ (6)$; \\
$x_{0} \ \ (7)$.
\end{center}
\end{theorem}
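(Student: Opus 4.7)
The plan is to mirror the generic-model framework of Sections~\ref{silva}--6, specialized to the $MU_n$-grading. The key structural feature is that every non-zero homogeneous component $M_n(K)_{(i,j)}$ is one-dimensional (spanned by $e_{ij}$), while $M_n(K)_0 = \{0\}$, so the $MU_n$-grading behaves like an elementary grading in which the role of the group identity is played collectively by the diagonal indices $\{(i,i)\}$ and in which unreachable products are absorbed by the semigroup zero. In this analogy, identities $(5)$, $(6)$, $(7)$ correspond respectively to identities $(1)$, $(2)$, $(3)$ of Lemma~\ref{luis1}.

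First I would construct a generic model: for each matrix unit $(i,j)$ and each integer $k\geq 1$ introduce a commuting variable $y_{ij}^k$, let $\tilde{\Omega}$ be the polynomial ring in these, and set $A_k^{(i,j)} := y_{ij}^k e_{ij} \in M_n(\tilde{\Omega})$ as the generic element of $MU_n$-degree $(i,j)$; let $\tilde{R}$ be the $MU_n$-graded subalgebra they generate. The argument of Lemma~\ref{felipe2} adapts directly to yield $T_{MU_n}(M_n(K)) = T_{MU_n}(\tilde{R})$, using that specializing the $y_{ij}^k$ produces all graded substitutions into $M_n(K)$ and that $K$ is infinite. A monomial $m$ then vanishes on $\tilde{R}$ if and only if either it contains a factor $x_0$ or its matrix-unit degrees fail to chain; both cases force $m$ to have $MU_n$-degree $0$ in the free graded ring, and the substitution $x_0 \mapsto m$ places $m$ in the $T_{MU_n}$-ideal generated by $(7)$.

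Next I would prove the rearrangement lemma --- the $MU_n$-analog of Lemma~\ref{felipe5.5}: if $m$ and $n$ are two chain monomials in the same variables $x_1,\dots,x_q$ whose generic evaluations produce the same non-zero entry in the same matrix position, then $m \equiv n$ modulo $\langle (5),(6)\rangle_{T_{MU_n}}$. The argument follows the inductive scheme of Lemma~\ref{felipe5.5}: if $m$ and $n$ start with the same variable, strip that variable and apply Lemma~\ref{felipe5} together with the inductive hypothesis; otherwise, locate inside $n$ the first variable of $m$ and use the chain of prefix degrees to exhibit either an adjacent pair of degree-$(i,i)$ diagonal variables (to be swapped via $(5)$) or a subword $x_{ij}y_{ji}z_{ij}$ (reversed via $(6)$), moving the target variable one step leftward at each stage until the first letters of the two monomials agree. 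The conclusion then follows exactly as in Lemma~\ref{felipe6}: a minimal multi-homogeneous counterexample $f = \sum \lambda_i m_i$ must contain two non-identity chain monomials cancelling on generic evaluation, which by the rearrangement lemma are congruent modulo $\langle (5),(6),(7)\rangle_{T_{MU_n}}$, contradicting minimality.

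The main obstacle is the rearrangement step: the matrix-unit degrees must chain legally at every intermediate stage, so one must verify that a local $(5)$- or $(6)$-move can always be applied to an admissible adjacent pattern inside $n$. The bookkeeping is slightly more delicate than in the group case because the ``inverse'' structure underlying $(6)$ is enforced by the semigroup rather than by group axioms; however, the rigidity afforded by the one-dimensional homogeneous components makes prefix-degree tracking essentially determined, which should guarantee that the inductive swap is always possible.
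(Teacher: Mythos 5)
Your proposal is correct and is essentially the paper's own route: the paper states this theorem only as a citation to Bahturin--Drensky, but for the immediately following generalization to infinite integral domains it proceeds exactly as you do --- generic matrices $A_{(i,j)}^{(k)} = w_{(i,j)}^{(k)}e_{ij}$ in $M_n(\Omega_1)$, the observation that every $MU_n$-graded monomial identity follows from $(7)$ because a broken chain or a factor $x_0$ forces $MU_n$-degree $0$, and then the rearrangement and minimal-counterexample arguments of Lemmas \ref{felipe5.5} and \ref{felipe6} with $(5)$ and $(6)$ playing the roles of $(1)$ and $(2)$. Since a characteristic-zero field is an infinite integral domain, your argument does establish the cited statement as well.
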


Here, we extend this result for infinite integral domains. Let
$J_{MU_{n}}$ be the $T_{MU_{n}}$-ideal generated by $(5)$, $(6)$ and
$(7)$.

Let $h \in MU_{n} - \{0\}$. Let $W_{h} =
\{w_{h}^{(1)},\cdots,w_{h}^{(n)},\cdots\}$ denote a countable set of
commuting variables. Let $W = \bigcup_{h \in MU_{n} - \{0\}} W_{h}$
and let $\Omega_{1} = K[W]$ be the polynomial ring in commuting
variables of the set $W$.

\begin{definition}
A generic matrix of $M_{n}(\Omega_{1})$ of $MU_{n}$-degree $(i,j)$
is a homogeneous element of the following type:
\begin{center}
$A_{(i,j)}^{(k)} := w_{(i,j)}^{(k)}e_{ij}$ where $1 \leq i,j \leq n$
and $k \in \{1,2,\cdots\}$.
\end{center}
The $MU_{n}$-graded subalgebra generated by the generic matrices of
$M_{n}(\Omega_{1})$ is called the algebra of generic matrices which
we denote by $R_{1}$.
\end{definition}

Notice that if $m$ is a $MU_{n}$-graded monomial identity of $R$,
then $m$ follows from $(7)$. The main steps of the proof of Lemmas
\ref{felipe5.5} and \ref{felipe6} hold also for this grading and we
obtain the following result.

\begin{theorem}
The $MU_{n}$-graded identities of $R_{1}$ follow from:
\begin{center}
$\bullet \ x_{ii}y_{jj} - y_{jj}x_{ii} \ \ \mbox{when} \ \ i \in \widehat{n} \ \ (5);$ \\
$\bullet \ x_{ij}y_{ji}z_{ij} - z_{ij}y_{ji}x_{ij} \ \ \mbox{when} \ \ 1 \leq i,j \leq n, \ \ i\neq j \ \ (6);$ \\
$\bullet \ x_{0} \ \ (7).$
\end{center}
If $K$ is an infinite integral domain, then $T_{MU_{n}}(M_{n}(K)) =
J_{MU_{n}}$.
\end{theorem}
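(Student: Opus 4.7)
The plan is to mirror the architecture used for elementary gradings in the preceding sections, with $J_{MU_{n}}$ playing the role of $U$ and the single identity $(7)$ absorbing the function of the monomial identities. First, I verify that $(5),(6),(7)$ are $MU_{n}$-graded identities of $R_{1}$: every generic matrix is a scalar multiple of a single matrix unit ($A_{(i,j)}^{(k)}=w_{(i,j)}^{(k)}e_{ij}$), so $(5)$ reduces to commutativity of scalars on $e_{ii}$, $(6)$ reduces to $\alpha\beta\gamma\,e_{ij}=\gamma\beta\alpha\,e_{ij}$, and $(7)$ holds because the degree-$0$ component of $R_{1}$ is zero. Next I handle monomial identities: if $m=x_{(i_{1},j_{1})}^{(1)}\cdots x_{(i_{k},j_{k})}^{(k)}$ has non-zero $MU_{n}$-degree, the chain condition $j_{l}=i_{l+1}$ is forced by the semigroup product, so substituting each variable by its matrix unit yields a non-zero matrix and $m$ is not an identity; if $m$ has $MU_{n}$-degree $0$, then $m\in K\langle X\rangle_{0}$ is a graded substitute of $x_{0}$ and so lies in $\langle x_{0}\rangle_{T_{MU_{n}}}$. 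Hence every monomial identity of $R_{1}$ follows from $(7)$.

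The core of the argument is an $MU_{n}$-analogue of Lemma \ref{felipe5.5}: if two monomials $m$ and $n$ in $x_{1},\dots,x_{q}$ produce, for some evaluation on generic matrices, the same non-zero entry in the same position, then $m\equiv n\pmod{J_{MU_{n}}}$. I copy the induction on length from Lemma \ref{felipe5.5}: when the two monomials start with the same variable, Lemma \ref{felipe5} gives the inductive step; otherwise I split $n_{1}=n_{1}^{[1,k_{1}-1]}\,n_{1}^{[k_{1},k_{2}-1]}\,n_{1}^{[k_{2},k_{3}]}\,n_{1}^{[k_{3}+1,r]}$ exactly as in Step~2 of that lemma. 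The three middle segments carry $MU_{n}$-degrees of the form $(i_{1},i_{t+1})$, $(i_{t+1},i_{1})$, $(i_{1},i_{t+1})$; if $i_{1}=i_{t+1}$ these are all diagonal and $(5)$ swaps them, and otherwise they form the off-diagonal triple governed by $(6)$. With this lemma in hand, I transcribe the proof of Lemma \ref{felipe6} (replacing $J_{1}$ by $J_{MU_{n}}$ and using the monomial claim established above in place of the hypothesis that $R$ has no monomial identity): a multi-homogeneous minimal counterexample $f=\sum\lambda_{i}m_{i}\in T_{MU_{n}}(R_{1})\setminus J_{MU_{n}}$ has two summands that share a non-zero entry in some evaluation, and these are congruent modulo $J_{MU_{n}}$ by the analogue just proved, contradicting minimality. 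This establishes $T_{MU_{n}}(R_{1})=J_{MU_{n}}$.

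Finally, for $K$ an infinite integral domain, the $MU_{n}$-analogue of Lemma \ref{felipe2} (the same Vandermonde-type argument as in \cite{Azevedo1}) yields $T_{MU_{n}}(M_{n}(K))=T_{MU_{n}}(R_{1})$, completing the proof. The point requiring care is the swap step inside the $MU_{n}$-analogue of Lemma \ref{felipe5.5}: the group-theoretic inverse driving identity $(2)$ in the elementary setting must be replaced by the semigroup reflection $(i,j)\leftrightarrow(j,i)$, and one must verify that the three segments produced by the swap always fall into the all-diagonal configuration of $(5)$ or into the $((i,j),(j,i),(i,j))$ configuration of $(6)$. Once this bookkeeping is done, the remainder is mechanical.
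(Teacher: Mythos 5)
Your proposal is correct and follows essentially the same route as the paper: the paper likewise disposes of the monomial identities via $(7)$ (every degree-$0$ element is a graded substitution instance of $x_{0}$, and every monomial of non-zero $MU_{n}$-degree is a non-identity by the chain condition), and then transplants the proofs of Lemmas \ref{felipe5.5} and \ref{felipe6}, with the passage to $M_{n}(K)$ handled by the analogue of Lemma \ref{felipe2}. Your explicit check that the three swapped segments always land in the configuration of $(5)$ or $(6)$ is exactly the bookkeeping the paper leaves implicit.
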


In the rest of this paper we will only consider matrices over an
infinite integral domain.

\section{$\mathbb{Z}_{p}$-graded central polynomial of $M_{p}(K)$}

Now, we describe the $\mathbb{Z}_{p}$-graded central polynomials of
$M_{p}(K)$ when $p$ is a prime number.

Let $\pi: \mathbb{Z} \rightarrow \mathbb{Z}_{n}$ denote the
canonical projection and let $j \in \mathbb{Z}_{n}$. The following
convention will be described in this section:
\begin{center}
$y_{h,i}^{j} := y_{h,i}^{k}$ when $k = \pi^{-1}(j)\cap \widehat{n}$.
\end{center}

\begin{definition}(\cite{Junior}, Preliminaries)
A sequence $(\gamma_{1},\cdots,\gamma_{n})$ of elements of
$\mathbb{Z}_{n}$ is called a complete sequence when the following
conditions are satisfied:
\begin{description}
\item $\gamma_{1} + \cdots + \gamma_{n} = 0$;
\item $\{\gamma_{1},\gamma_{1} + \gamma_{2},\cdots,\gamma_{1} + \cdots + \gamma_{n}\} =
\mathbb{Z}_{n}$.
\end{description}
\end{definition}

The next lemma is the immediate consequence of the complete sequence
definition.

\begin{lemma}\label{felipe14.4}
A sequence $(\gamma_{1},\cdots,\gamma_{n})$ of elements of
$\mathbb{Z}_{n}$ is a complete sequence of $\mathbb{Z}_{n}$ if and
only if there exist matrix units $e_{i_{1}j_{1}} \in
M_{n}(K)_{\gamma_{1}},\cdots, e_{i_{n}j_{n}} \in
M_{n}(K)_{\gamma_{n}}$ such that $i_{l + 1} = j_{l}$ for all $(l +
1)\in \widehat{n}$. Moreover, $\widehat{n} = \{i_{1},\cdots,i_{n}\}$
and $i_{1} = j_{n}$.
\end{lemma}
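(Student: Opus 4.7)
The plan is to unfold the $\mathbb{Z}_{n}$-canonical grading. Since $g_{k}=\overline{k}$ in the defining tuple, the matrix unit $e_{ij}$ is homogeneous of degree $g_{i}^{-1}g_{j}=\overline{j-i}\in\mathbb{Z}_{n}$. Thus the condition $e_{i_{l}j_{l}}\in M_{n}(K)_{\gamma_{l}}$ is equivalent to the purely additive equality $\overline{j_{l}-i_{l}}=\gamma_{l}$. This translates the lemma into a statement about sequences of indices in $\widehat{n}$, after which both implications should reduce to a short telescoping argument.

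For the ($\Leftarrow$) direction I would start from the given matrix units with $i_{l+1}=j_{l}$ for $l+1\in\widehat{n}$ and $j_{n}=i_{1}$, set $i_{n+1}:=i_{1}$, and record that $\gamma_{l}=\overline{i_{l+1}-i_{l}}$ for every $l\in\widehat{n}$. A telescoping sum yields $\gamma_{1}+\cdots+\gamma_{n}=\overline{i_{n+1}-i_{1}}=\overline{0}$, and by the same telescoping $\gamma_{1}+\cdots+\gamma_{l}=\overline{i_{l+1}-i_{1}}$ for every $l$. Since $\widehat{n}=\{i_{1},\ldots,i_{n}\}$, the $n$ classes $\overline{i_{l+1}-i_{1}}$ (for $l=0,1,\ldots,n-1$) are pairwise distinct and therefore exhaust $\mathbb{Z}_{n}$, which is precisely completeness.

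For the ($\Rightarrow$) direction I would reverse the construction. Choose any $i_{1}\in\widehat{n}$, and recursively let $j_{l}$ be the unique element of $\widehat{n}$ representing $\overline{i_{l}}+\gamma_{l}$ in $\mathbb{Z}_{n}$, then set $i_{l+1}:=j_{l}$. By construction $e_{i_{l}j_{l}}\in M_{n}(K)_{\gamma_{l}}$ and the chain condition $i_{l+1}=j_{l}$ holds. Induction gives $\overline{i_{l+1}}=\overline{i_{1}}+(\gamma_{1}+\cdots+\gamma_{l})$; the second defining condition of a complete sequence then forces the $n$ values $\overline{i_{2}},\ldots,\overline{i_{n+1}}$ to cover $\mathbb{Z}_{n}$, so that $\{i_{1},\ldots,i_{n}\}=\widehat{n}$, while the first defining condition gives $\overline{i_{n+1}}=\overline{i_{1}}$, whence $j_{n}=i_{n+1}=i_{1}$.

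Conceptually the proof is pure bookkeeping, and the one point that requires care, which I expect to be the only real obstacle, is the cyclic identification between the representative set $\widehat{n}=\{1,\ldots,n\}$ and the additive group $\mathbb{Z}_{n}$; this is what makes the wraparound condition $i_{1}=j_{n}$ correspond exactly to $\gamma_{1}+\cdots+\gamma_{n}=\overline{0}$, and what makes the distinctness of the $i_{l}$ correspond to the partial sums exhausting $\mathbb{Z}_{n}$. Once that identification is fixed at the start of the argument, both directions are formal.
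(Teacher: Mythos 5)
Your proof is correct and is essentially the paper's argument: the paper dismisses this lemma as "the immediate consequence of the complete sequence definition," and your telescoping computation with $\gamma_{l}=\overline{i_{l+1}-i_{l}}$ under the canonical grading is exactly the routine verification being left implicit. The only nitpick is a harmless index shift in the ($\Leftarrow$) direction (the partial sums in the definition run over $l=1,\dots,n$, not $l=0,\dots,n-1$), which does not affect the argument since $i_{n+1}=i_{1}$.
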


Following word for word the proof of Brandão Júnior in \cite{Junior}
(see Lemma 1 and Proposition 1), we have the following lemma:

\begin{lemma}\label{felipe14.5}
The $\mathbb{Z}_{n}$-graded multilinear polynomial:
\begin{center}
$\sum_{\sigma \in H_{n}} x_{\sigma(1)}\cdots x_{\sigma(n)}$,
\end{center}
where $({\alpha(x_{1})},\cdots,\alpha(x_{n}))$ is a complete
sequence of $\mathbb{Z}_{n}$, is a $\mathbb{Z}_{n}$-graded central
polynomial of $M_{n}(K)$. Furthermore, it is not a
$\mathbb{Z}_{n}$-graded polynomial identity of $M_{n}(K)$.
\end{lemma}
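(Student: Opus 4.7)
The plan is to prove both parts by reducing to a concrete computation with matrix units (or equivalently with generic matrices, as justified by Lemma~\ref{felipe2} and Corollary~\ref{felipe11}). Since every monomial $x_{\sigma(1)}\cdots x_{\sigma(n)}$ has total $\mathbb{Z}_n$-degree $\gamma_1+\cdots+\gamma_n=0$, and for the canonical $\mathbb{Z}_n$-grading the neutral component of $M_n(K)$ is exactly the diagonal, every evaluation of $f$ on homogeneous elements automatically lies in the space of diagonal matrices. A diagonal matrix is central in $M_n(K)$ (over any integral domain) iff it is scalar, iff all its diagonal entries coincide, so the whole task for the first assertion is to verify that $f(A_1,\dots,A_n)_{jj}$ is independent of $j$.

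To see this, I would set $B^r:=A_{1+r}A_{2+r}\cdots A_{n+r}$ (indices taken mod~$n$ in $\{1,\dots,n\}$), so that $f(A_1,\dots,A_n)=\sum_{r=0}^{n-1}B^r$. Using Lemma~\ref{felipe0}, the $(j,j)$-entry of $B^r$ is a single product of generic-matrix variables, one from each $A_{m}$, indexed by the successive row labels $j,\,j+\gamma_{1+r},\,j+\gamma_{1+r}+\gamma_{2+r},\dots$. A short computation with cyclic partial sums, together with the fact that the scalar entries commute in $\Omega$, shows
\[
(B^r)_{jj}\;=\;(B^0)_{\,j-S_r,\,j-S_r},\qquad S_r:=\gamma_1+\cdots+\gamma_r.
\]
This is the core identity; the only real content of the proof is unwinding the cyclic indices correctly to see that the same unordered multiset of variables appears on both sides. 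Now invoke the completeness of $(\gamma_1,\dots,\gamma_n)$: by definition $\{S_1,\dots,S_n\}=\mathbb{Z}_n$ and $S_n=0=S_0$, so $\{S_0,S_1,\dots,S_{n-1}\}=\mathbb{Z}_n$ as well. Therefore, as $r$ runs over $\{0,\dots,n-1\}$, $j-S_r$ runs through all of $\widehat{n}$, giving
\[
f(A_1,\dots,A_n)_{jj}\;=\;\sum_{r=0}^{n-1}(B^0)_{\,j-S_r,\,j-S_r}\;=\;\mathrm{tr}(A_1A_2\cdots A_n),
\]
which is independent of $j$. Hence $f(A_1,\dots,A_n)=\mathrm{tr}(A_1\cdots A_n)\cdot I$ is a scalar matrix, proving the central-polynomial claim.

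For the second assertion I would use Lemma~\ref{felipe14.4} directly: completeness of $(\gamma_1,\dots,\gamma_n)$ produces matrix units $e_{i_1 j_1},\dots,e_{i_n j_n}$ of the prescribed degrees with $j_\ell=i_{\ell+1}$ and $j_n=i_1$, so $e_{i_1 j_1}e_{i_2 j_2}\cdots e_{i_n j_n}=e_{i_1 i_1}\neq 0$. Substituting these into $A_1,\dots,A_n$ makes $B^0=e_{i_1 i_1}$, whose trace is $1$, and the formula above gives $f=I\neq 0$. The main obstacle is purely bookkeeping: handling the cyclic index arithmetic modulo $n$ cleanly enough to establish $(B^r)_{jj}=(B^0)_{j-S_r,\,j-S_r}$; once this is in place, the completeness hypothesis immediately delivers both conclusions.
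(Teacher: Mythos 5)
Your argument is correct: the reduction to showing that all diagonal entries of $f(A_1,\dots,A_n)$ coincide, the cyclic-shift identity $(B^r)_{jj}=(B^0)_{j-S_r,\,j-S_r}$ obtained from Lemma~\ref{felipe0}, the use of completeness (via $\{S_0,\dots,S_{n-1}\}=\mathbb{Z}_n$) to turn the sum over $r$ into $\mathrm{tr}(A_1\cdots A_n)$, and the non-vanishing via the matrix units of Lemma~\ref{felipe14.4} are all sound, with the passage from generic matrices to arbitrary homogeneous substitutions justified by specialization (Corollary~\ref{felipe11}). The paper itself offers no proof here --- it defers word for word to Lemma~1 and Proposition~1 of \cite{Junior} --- and that cited argument is essentially the same trace computation, so you have simply supplied the details the paper omits.
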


\begin{lemma}\label{felipe14.6}
Let $m = x_{i_{1}}\cdots x_{i_{q}}$ be a monomial such that
$\alpha(m) = 0$ and $\alpha(x_{i_{j}}) = h_{i_{j}} \ \mbox{for all}
\ j \in \widehat{q}$. Let $y_{h_{i_{1}},i_{1}}^{s_{i_{1}}^{k}}\cdots
y_{h_{i_{q}},i_{q}}^{s_{i_{q}}^{k}}$ be an entry of
$A_{i_{1}}.\cdots.A_{i_{q}}$. If there exists a subsequence
$(s_{i_{v_{1}}}^{k},\cdots,s_{i_{v_{n}}}^{k})$ of
$(s_{i_{1}}^{k},\cdots,s_{i_{q}}^{k})$ such that
$\{s_{i_{v_{1}}}^{k},\cdots,s_{i_{v_{n}}}^{k}\} = \widehat{n}$ and
$s_{i_{v_{1}}}^{k} = s_{i_{1}}^{k}$, then there exist monomials
$m_{1},\cdots,m_{n}$ such that
$(\alpha(m_{1}),\cdots,\alpha(m_{n}))$ is a complete sequence of
$\mathbb{Z}_{n}$ and $m = m_{1}m_{2}\cdots m_{n}$.
\end{lemma}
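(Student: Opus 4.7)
The plan is to cut the monomial $m$ at the positions singled out by $v_1 < v_2 < \cdots < v_n$ and show that the $\mathbb{Z}_n$-degrees of the resulting $n$ factors telescope into a complete sequence. The underlying observation is just a restatement of Lemma \ref{felipe0} in additive $\mathbb{Z}_n$ notation.

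First I would normalize so that $v_1 = 1$. The hypothesis $s^{k}_{i_{v_1}} = s^{k}_{i_1}$ lets me replace $v_1$ by $1$ in the subsequence without disturbing the set $\{s^k_{i_{v_l}} : 1 \le l \le n\} = \widehat{n}$ (the new value $s^k_{i_1}$ equals the discarded $s^k_{i_{v_1}}$, and $1 < v_2$ since $v_1 \geq 1$), so this is harmless. Setting $v_{n+1} := q+1$, I then define $m_l := m^{[v_l,\, v_{l+1}-1]}$ for $l = 1, \ldots, n$, which by construction gives $m = m_1 m_2 \cdots m_n$.

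The crux is a subword degree formula: from Definition \ref{def} one has $s^k_{l+1} \equiv s^k_l + h_l \pmod n$ in the $\mathbb{Z}_n$-canonical grading, so telescoping yields $\alpha\bigl(m^{[a,b]}\bigr) = \overline{s^k_{b+1} - s^k_a}$ for any subword. Applied to each block this gives $\alpha(m_l) = \overline{s^k_{i_{v_{l+1}}} - s^k_{i_{v_l}}}$ for $l < n$ and $\alpha(m_n) = \overline{s^k_{q+1} - s^k_{i_{v_n}}}$, and the partial sums telescope once more to $\gamma_r := \alpha(m_1) + \cdots + \alpha(m_r) = \overline{s^k_{i_{v_{r+1}}} - s^k_{i_{v_1}}}$ for $r < n$, and $\gamma_n = \overline{s^k_{q+1} - s^k_{i_{v_1}}}$.

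Finally I verify the two defining properties of a complete sequence. Since $\alpha(m) = \overline{0}$, one has $s^k_{q+1} = s^k_1 = s^k_{i_{v_1}}$, so $\gamma_n = \overline{0}$, giving the first condition. For the second, $\gamma_n = \overline{0}$ coincides with the $l=1$ entry of the translated family $\{\overline{s^k_{i_{v_l}} - s^k_{i_{v_1}}} : 1 \le l \le n\}$, so $\{\gamma_1, \ldots, \gamma_n\}$ equals this family; and the hypothesis $\{s^k_{i_{v_l}}\} = \widehat{n}$ guarantees that after translation by $-s^k_{i_{v_1}}$ we recover all of $\mathbb{Z}_n$, as desired. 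I anticipate no serious obstacle here: the whole argument is a telescoping identity, and the only care needed is untangling the $s^k_{i_j}$ notation and absorbing the case $v_1 > 1$ via the normalization step above.
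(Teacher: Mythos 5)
Your proposal is correct and follows essentially the same route as the paper: cut $m$ at the distinguished positions $v_{1},\cdots,v_{n}$ and observe that the degrees of the resulting blocks form a complete sequence because the values $s_{i_{v_{l}}}^{k}$ exhaust $\widehat{n}$. The only difference is cosmetic: you verify the complete-sequence conditions by a direct telescoping computation in $\mathbb{Z}_{n}$, whereas the paper passes through matrix units and Lemma \ref{felipe14.4} (after first reducing to the multilinear case), and your version avoids that reduction entirely.
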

\begin{proof}
If $n = 1$, the proof is obvious. From now on, $n > 1$. First, we
assume that $m$ is a multilinear monomial.

In fact, there are matrix units $e_{j_{1}l_{1}} \in
M_{n}(K)_{\alpha(x_{1})},\cdots,e_{j_{q}l_{q}} \in
M_{n}(K)_{\alpha(x_{q})}$ such that $e_{j_{1}l_{1}}\cdots
e_{j_{q}l_{q}} = e_{j_{1}l_{q}}$, where $j_{t} = s_{i_{t}}^{k}$ for
all $t \in \widehat{q}$. Likewise, $j_{1} = l_{q}$ because $m \in
K\langle X \rangle_{0}$. We may suppose without any loss of
generality that $j_{t} = s_{i_{v_{t}}}^{k}$ for $t = 2,\cdots,n$.

Let: $m_{i} = x_{i}$, $i = 1,\cdots,n-1$; $m_{n} = x_{n}\cdots
x_{q}$.

Notice that $e_{j_{1}l_{1}}e_{j_{2}l_{2}}\cdots e_{j_{n}l_{q}} =
e_{j_{1}l_{q}}$. So $\{j_{1},\cdots,j_{n}\} = \widehat{n}$, $l_{t} =
j_{t+1}$ for all $t \in \widehat{n-1}$ and $j_{1} = l_{q}$.
Consequently, by Lemma \ref{felipe14.4}, it follows that
$m_{1},\cdots,m_{n}$ satisfy the thesis of this lemma.

Now, we assume that $m$ is an arbitrary monomial. We would choose a
multilinear monomial $\overline{m}$ such that $h(m) =
h(\overline{m})$. There exist monomials
$\overline{m_{1}},\cdots,\overline{m_{n}}$ such that
$(\alpha(\overline{m_{1}}),\cdots,\alpha(\overline{m_{n}}))$ is a
complete sequence of $\mathbb{Z}_{n}$ and $\overline{m} =
\overline{m_{1}}\overline{m_{2}}\cdots \overline{m_{n}}$. Thus,
there must exist monomials $m_{1},\cdots,m_{n}$ such that $m =
m_{1}.\cdots.m_{n}$ and $h(m_{1}) =
h(\overline{m_{1}}),\cdots,h(m_{n}) = h(\overline{m_{n}})$. The
proof is complete.
\end{proof}

The proof of the following lemma is left as an exercise.

\begin{lemma}\label{felipe16.9}
Let $A = \{a_{1},\cdots,a_{l}\} \varsubsetneq \mathbb{Z}_{p}$ be a
set. Then:
\begin{center}
$\{a_{1} + i,\ldots, a_{l} + i \} \neq \{a_{1} + j,\ldots, a_{l} + j
\}$
\end{center}
for any $i,j \in \mathbb{Z}_{p}$ distinct.
\end{lemma}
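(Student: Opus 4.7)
The plan is to argue by contradiction: suppose there exist distinct $i,j \in \mathbb{Z}_{p}$ with $A + i = A + j$. Setting $d = j - i \in \mathbb{Z}_{p} \setminus \{0\}$, this is equivalent to $A + d = A$, i.e.\ $A$ is invariant under translation by $d$.

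Next I would exploit primality of $p$. Because $p$ is prime and $d \neq 0$, the cyclic subgroup $\langle d \rangle \leq \mathbb{Z}_{p}$ is all of $\mathbb{Z}_{p}$; equivalently, every element $c \in \mathbb{Z}_{p}$ can be written as $c = kd$ for some integer $k$. Iterating the relation $A + d = A$ gives $A + kd = A$ for every $k$, hence $A + c = A$ for every $c \in \mathbb{Z}_{p}$.

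Finally, pick any $a_{1} \in A$ (the set is nonempty since it is written as $\{a_{1},\ldots,a_{l}\}$ with $l \geq 1$, and if $l = 0$ the conclusion is vacuous because one would get equality of empty sets; but the excerpt's use of the lemma is for nonempty proper subsets). Then $a_{1} + c \in A$ for every $c \in \mathbb{Z}_{p}$, so $\mathbb{Z}_{p} \subseteq A$, contradicting $A \varsubsetneq \mathbb{Z}_{p}$.

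There is no serious obstacle; the whole argument is a one-line application of the fact that $\mathbb{Z}_{p}$ has no nontrivial proper subgroups when $p$ is prime, which is precisely why the hypothesis on $p$ (rather than general $n$) is essential: for composite $n$ the stabilizer of $A$ under translation could be a proper nontrivial subgroup, and unions of cosets of such a subgroup furnish counterexamples.
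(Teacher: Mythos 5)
Your proof is correct: the reduction to $A+d=A$ with $d=j-i\neq 0$, the use of primality of $p$ to conclude that $\langle d\rangle=\mathbb{Z}_{p}$, and the resulting contradiction with $A\varsubsetneq\mathbb{Z}_{p}$ (using that $A$ is nonempty) is exactly the intended argument. The paper explicitly leaves this lemma as an exercise and gives no proof of its own, so there is nothing to compare against; your observation that the statement genuinely requires $p$ prime (unions of cosets of a proper nontrivial subgroup give counterexamples for composite moduli) is a worthwhile addition.
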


The next lemma is well known.

\begin{lemma}\label{felipe15.49}
Let $z_{1},z_{2} \in K\langle X \rangle_{1}$. Then the monomials
$z_{1}^{2}$ and $z_{1}^{2}z_{2}^{2}$ are $\mathbb{Z}_{2}$-graded
central monomials of $M_{2}(K)$.
\end{lemma}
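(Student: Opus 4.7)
The plan is to reduce this to a direct computation in $M_2(K)$ using the fact that the $\mathbb{Z}_2$-grading under consideration is the elementary one induced by $(g_1, g_2)$ with $g_1 \neq g_2$ (so the neutral component is the diagonal). Under this grading, any homogeneous element of degree $1$ evaluated in $M_2(K)$ has the form $A = \alpha e_{12} + \beta e_{21}$ for some $\alpha, \beta \in K$.

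First I would compute $A^2$ directly. Since $e_{12}e_{12} = 0$, $e_{21}e_{21} = 0$, $e_{12}e_{21} = e_{11}$, and $e_{21}e_{12} = e_{22}$, one gets
\[
A^2 = (\alpha e_{12} + \beta e_{21})^2 = \alpha\beta\, e_{11} + \alpha\beta\, e_{22} = \alpha\beta\, I,
\]
which is a scalar matrix, hence lies in $Z(M_2(K))$. This immediately shows that $z_1^2$ is a $\mathbb{Z}_2$-graded central polynomial of $M_2(K)$ (the condition $f(0) = 0$ being trivial since the monomial has positive degree).

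For $z_1^2 z_2^2$, the same computation applied to a second degree-$1$ evaluation $B = \gamma e_{12} + \delta e_{21}$ gives $B^2 = \gamma\delta\, I$, so
\[
A^2 B^2 = (\alpha\beta)(\gamma\delta)\, I \in Z(M_2(K)),
\]
and again $f(0,0) = 0$ is trivial. This completes the verification.

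There is no genuine obstacle here: the argument is a two-line matrix calculation, and the key observation is simply that the square of an off-antidiagonal matrix in $M_2(K)$ is scalar. No appeal to the machinery of the preceding sections is needed.
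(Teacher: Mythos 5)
Your computation is correct and is exactly the direct verification one would expect: in the $\mathbb{Z}_2$-canonical (elementary) grading of $M_{2}(K)$ the degree-$\overline{1}$ component consists of the matrices $\alpha e_{12}+\beta e_{21}$, whose square is the scalar matrix $\alpha\beta I$, so $z_{1}^{2}$ and hence $z_{1}^{2}z_{2}^{2}$ take central values, and the condition $f(0,\dots,0)=0$ is automatic for a monomial. The paper gives no proof at all (it dismisses the lemma as ``well known''), so your two-line calculation supplies precisely the omitted argument; there is no gap.
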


\begin{lemma}\label{felipe16.5}
Let $p > 2$. Let $x_{1},x_{2}$ be variables such that $\alpha(x_{1})
= \alpha(x_{2})\neq 0$. Then $(x_{1}x_{2})^{p} \equiv
x_{2}^{p}x_{1}^{p} \ \ mod \ \ T_{\mathbb{Z}_{p}}(M_{p}(K))$.
\end{lemma}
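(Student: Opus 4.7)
The plan is to verify directly that every evaluation of $(x_1 x_2)^p - x_2^p x_1^p$ at homogeneous matrices of $\mathbb{Z}_p$-degree $h$ vanishes. Write arbitrary $a, b \in M_p(K)_h$ as $a = \sum_{k=1}^{p} a_k e_{k, k+h}$ and $b = \sum_{k=1}^{p} b_k e_{k, k+h}$, with all indices read modulo $p$. A direct computation with the matrix-unit multiplication rule (or, equivalently, Lemma \ref{felipe0} applied in the generic model) gives $ab = \sum_k a_k b_{k+h} e_{k, k+2h}$, and iterating this product $p$ times yields
\[
(ab)^p = \sum_{k=1}^{p} \left( \prod_{i=0}^{p-1} a_{k+2ih}\, b_{k+(2i+1)h} \right) e_{k,\, k+2ph}.
\]
Since $2ph \equiv 0 \pmod{p}$, each nonzero summand already lies on the main diagonal.

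The crucial observation is that, because $p > 2$ is prime and $h \neq 0$ in $\mathbb{Z}_p$, the elements $h$ and $2h$ are both generators of $\mathbb{Z}_p$. Hence, for every fixed $k$,
\[
\{\,k + 2ih : 0 \le i \le p-1\,\} = \mathbb{Z}_p = \{\,k + (2i+1)h : 0 \le i \le p-1\,\}.
\]
Using the commutativity of $K$, the $(k,k)$-entry of $(ab)^p$ therefore collapses to $\bigl(\prod_{j=1}^p a_j\bigr)\bigl(\prod_{j=1}^p b_j\bigr)$, a quantity independent of $k$. Consequently $(ab)^p$ is a scalar multiple of the identity matrix.

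Applying the same computation to the single matrices $a$ and $b$ gives $a^p = \bigl(\prod_{j=1}^p a_j\bigr)\cdot \mathrm{Id}$ and $b^p = \bigl(\prod_{j=1}^p b_j\bigr)\cdot \mathrm{Id}$, so that $b^p a^p$ is the same scalar matrix as $(ab)^p$. This proves $(x_1 x_2)^p \equiv x_2^p x_1^p$ modulo $T_{\mathbb{Z}_p}(M_p(K))$. The argument has no real technical obstacle; the one delicate point is the hypothesis $p > 2$, which is used precisely to guarantee that $2h$ generates $\mathbb{Z}_p$. Without it (i.e., for $p = 2$ and $h = \overline{1}$) one would have $2h = 0$, the diagonal entries of $(ab)^p$ would genuinely depend on $k$, and the conclusion would fail. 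If one prefers to stay within the generic-model setup of Section \ref{silva}, the same calculation applied to generic matrices $A_1^h, A_2^h$ combined with Corollary \ref{felipe11} transfers the identity from $R$ back to $M_p(K)$.
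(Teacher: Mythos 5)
Your proof is correct, and the core computation is the same one the paper relies on: iterating the product of matrices of degree $h$ steps through the orbit of $2h$, and for an odd prime $p$ with $h\neq 0$ both $h$ and $2h$ generate $\mathbb{Z}_{p}$, so the relevant products of entries sweep out all of $\mathbb{Z}_{p}$ and become independent of the starting row. Where you diverge is in how the conclusion is drawn. The paper works inside the generic model: it computes only the $(1,1)$ entry of $(A_{1}A_{2})^{p}$ and of $A_{2}^{p}A_{1}^{p}$, observes they coincide, and then invokes Lemma \ref{felipe5.5} (two monomials whose generic evaluations share a non-zero entry in the same position are congruent modulo $J_{1}$) to upgrade agreement at a single position to the congruence modulo $T_{\mathbb{Z}_{p}}(M_{p}(K))$. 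You instead evaluate on arbitrary homogeneous $a,b\in M_{p}(K)_{h}$, compute every diagonal entry, and show both sides equal the same scalar matrix $\bigl(\prod_{j}a_{j}\bigr)\bigl(\prod_{j}b_{j}\bigr)\mathrm{Id}$, so that $(x_{1}x_{2})^{p}-x_{2}^{p}x_{1}^{p}$ is outright a graded identity of $M_{p}(K)$. Your route is more elementary and self-contained -- it needs neither Lemma \ref{felipe5.5} nor the generic-matrix machinery, and it yields the slightly more informative fact that both monomials are central with the same value -- at the modest cost of redoing for every $k$ a computation the paper performs once and then propagates by a structural lemma. Your closing remark about the failure at $p=2$ (where $2h=0$ and the diagonal entries of $(ab)^{p}$ genuinely depend on $k$) correctly isolates where the hypothesis $p>2$ enters.
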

\begin{proof}
Let $A_{1} \in R_{\alpha(x_{1})}$ and $A_{2} \in R_{\alpha(x_{2})}$
be two generic matrices. By Lemma \ref{felipe0}, it is obvious that
all positions in the diagonal of $(A_{1}A_{2})^{p}$ (respectively
$A_{2}^{p}A_{1}^{p}$) have non-zero entries. According to Lemma
\ref{felipe5.5}, it is sufficient to prove that
$e_{11}(A_{1}A_{2})^{p} = e_{11}(A_{2}^{p}A_{1}^{p})$.

In fact,
\begin{center}
$e_{11}(A_{1}A_{2})^{p} =
e_{11}(\sum_{i=1}^{p}y_{\alpha(x_{1}),1}^{i}y_{\alpha(x_{1}),2}^{i
+\alpha(x_{1})}e_{i\pi^{-1}(\overline{i + 2\alpha(x_{1})})\cap
\widehat{p}})^{p} =
(\prod_{i=1}^{p}(y_{\alpha(x_{1}),1}^{i}y_{\alpha(x_{1}),2}^{i +
\alpha(x_{1})})e_{11}) = (\prod_{i=1}^{p} y_{\alpha(x_{1}),2}^{i +
\alpha(x_{1})})(\prod_{i=1}^{p} y_{\alpha(x_{1}),1}^{i})e_{11} =
(\prod_{i=1}^{p} y_{\alpha(x_{1}),2}^{i})(\prod_{i=1}^{p}
y_{\alpha(x_{1}),1}^{i})e_{11} = (A_{2})^{p}e_{11}(A_{1})^{p}e_{11}
= A_{2}^{p}A_{1}^{p}e_{11}$.
\end{center}
So, $(x_{1}x_{2})^{p} \equiv x_{2}^{p}x_{1}^{p} \ \ mod \ \
T_{\mathbb{Z}_{p}}(M_{p}(K))$ as required.
\end{proof}

\begin{lemma}\label{felipe15.5}(\cite{Junior}, Lemma 8)
Let $p > 2$ and let $l \in \widehat{p - 1}$. Let $m =
x_{1}^{p}\cdots x_{l}^{p}$ be a $\mathbb{Z}_{p}$-graded monomial
such that $\alpha(x_{i}) \neq \alpha(x_{j})$ for $i\neq j$. Then $m
\in C_{\mathbb{Z}_{p}}(M_{p}(K))$.
\end{lemma}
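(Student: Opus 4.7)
The plan is to pass to the generic matrix algebra $R$ via Corollary \ref{felipe11} and show directly that $A_1^{p} \cdots A_l^{p}$ is a scalar multiple of the identity in $M_{p}(\Omega)$, hence central. First I would note that the hypothesis $l \leq p-1$ combined with the assumption of pairwise distinct degrees is designed so that we may take each $\alpha(x_i) \neq 0$ (there being exactly $p-1$ nonzero elements of $\mathbb{Z}_p$ available); I will argue under this standing assumption.

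The heart of the argument is the computation of $A_i^{p}$ for a single $i$. Writing $h_i := \alpha(x_i) \neq 0$ and applying Lemma \ref{felipe0} to the constant $p$-tuple $(h_i,\ldots,h_i)$, every index $k \in \widehat{p}$ belongs to the associated index set, the corresponding sequence is $(k,\, k + h_i,\, \ldots,\, k + (p-1)h_i)$ (mod $p$), and because $p h_i = 0$ one has $s_{p+1}^{k} = k$. Therefore
\[
A_i^{p} \;=\; \sum_{k=1}^{p} \Bigl(\, \prod_{j=0}^{p-1} y_{h_i,\,i}^{\,k + j h_i}\, \Bigr)\, e_{kk}.
\]
Since $p$ is prime and $h_i \neq 0$, the element $h_i$ generates $\mathbb{Z}_p$, so for each fixed $k$ the set $\{k + j h_i : 0 \leq j \leq p-1\}$ equals $\mathbb{Z}_p$. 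Consequently the diagonal entry $c_i := \prod_{r \in \mathbb{Z}_p} y_{h_i,\,i}^{\,r}$ is independent of $k$, and $A_i^{p} = c_i \cdot I_p$ is a scalar matrix in $M_{p}(\Omega)$.

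With this in hand the conclusion is immediate: $A_1^{p} A_2^{p} \cdots A_l^{p} = (c_1 c_2 \cdots c_l)\, I_p$ is a scalar matrix, hence lies in the center of $M_p(\Omega)$, and therefore $m \in C_{\mathbb{Z}_p}(R)$. Corollary \ref{felipe11} then upgrades this to $m \in C_{\mathbb{Z}_p}(M_{p}(K))$.

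The only potential obstacle is the indexing bookkeeping in the application of Lemma \ref{felipe0} to the constant tuple $(h_i,\ldots,h_i)$; once that is carried out, the primality of $p$ forces the orbit of $k$ under the shift by $h_i$ to cover all of $\mathbb{Z}_p$, which is precisely what collapses the diagonal entries to a common value. No auxiliary commutation step (such as Lemma \ref{felipe16.5}) is required at the level of matrices, since the factors $A_i^{p}$ are already scalar and so commute trivially.
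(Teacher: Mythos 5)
Your proposal is correct and takes essentially the same route as the paper: both compute $A_i^{p}$ via Lemma \ref{felipe0} and use that a nonzero $\alpha(x_i)$ generates $\mathbb{Z}_p$ to see the diagonal entries coincide, i.e.\ $A_i^{p}$ is scalar. The only cosmetic difference is that the paper treats $l=1$ and then invokes that $C_{\mathbb{Z}_p}(M_p(K))$ is a subalgebra, whereas you multiply the scalar matrices directly; your explicit remark that the degrees must be taken nonzero is a point the paper leaves implicit.
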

\begin{proof}
First, we assume that $l = 1$.

According to Lemma \ref{felipe0}
\begin{center}
$(A_{1}^{\alpha(x_{1})})^{p} = \sum_{i = 1}^{p}
y_{\alpha(x_{1}),1}^{i}y_{\alpha(x_{1}),1}^{i + \alpha(x_{1})}\dots
y_{\alpha(x_{1}),1}^{i + (p - 1)\alpha(x_{1})}e_{ii}$.
\end{center}
So $y_{\alpha(x_{1}),1}^{i}y_{\alpha(x_{1}),1}^{i +
\alpha(x_{1})}\dots y_{\alpha(x_{1}),1}^{i + {(p - 1)\alpha(x_{1})}}
= y_{\alpha(x_{1}),1}^{j}y_{\alpha(x_{1}),1}^{j +
\alpha(x_{1})}\dots y_{\alpha(x_{1}),1}^{j + (p - 1)\alpha(x_{1})}$
for all $i,j \in \widehat{p}$ because $\langle \alpha(x_{1}) \rangle
= \mathbb{Z}_{p}$. Consequently $x_{1}^{p} \in
C_{\mathbb{Z}_{p}}(M_{p}(K))$. Bearing in mind that
$C_{\mathbb{Z}_{p}}(M_{p}(K))$ is a subalgebra, the result follows.
\end{proof}

\begin{definition}\label{felipe16}
Let $V_{1}$ denote the $T_{\mathbb{Z}_{p}}$-graded space generated
by the monomials reported in the hypothesis of Lemma
\ref{felipe15.49} or the monomials that satisfy the hypothesis of
Lemma \ref{felipe15.5}.
\end{definition}

The proofs of the two following lemmas (Lemmas \ref{luis2} and
\ref{luis3}) are left as an exercise.

\begin{lemma}\label{luis2}
Let $z_{1},\cdots,z_{n} \in K\langle X \rangle_{1}$. Let $m =
z_{1}^{2.k_{1}}\cdots z_{n}^{2.k_{n}}$, where $k_{1},\cdots,k_{n}
\in \mathbb{N} - \{0\}$. Then there exists $m_{1} \in \langle
V_{1}\rangle^{T_{\mathbb{Z}_{2}}}$ such that $m - m_{1} \equiv 0$
modulo $T_{\mathbb{Z}_{2}}(M_{2}(K))$.
\end{lemma}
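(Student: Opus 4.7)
The plan is to produce, by a single $\mathbb{Z}_{2}$-graded substitution into one of the two generators of $V_{1}$ provided by Lemma \ref{felipe15.49}, a polynomial $m_{1}$ that agrees with $m$ on every evaluation of degree-$\overline{1}$ elements of $M_{2}(K)$, hence $m - m_{1} \in T_{\mathbb{Z}_{2}}(M_{2}(K))$.

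The key observation is the following. On the canonical $\mathbb{Z}_{2}$-grading of $M_{2}(K)$, the component $M_{2}(K)_{\overline{1}}$ consists of anti-diagonal matrices, and any $A = a e_{12} + b e_{21}$ satisfies $A^{2} = ab \cdot I$. Consequently, if $f = z_{1}^{l_{1}} \cdots z_{n}^{l_{n}}$ is a monomial in variables of $\mathbb{Z}_{2}$-degree $\overline{1}$ with odd exponent-sum $l_{1}+\cdots+l_{n}$, then $f \in K\langle X\rangle_{\overline{1}}$, and a short matrix-unit calculation (or a direct application of Lemma \ref{felipe0}) shows that $f^{2}$ evaluates on any $(A_{1},\dots,A_{n}) \in M_{2}(K)_{\overline{1}}^{n}$ to the scalar $\bigl(\prod_{i}(a_{i}b_{i})^{l_{i}}\bigr) I$, which is precisely the evaluation of $z_{1}^{2 l_{1}} \cdots z_{n}^{2 l_{n}}$. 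Thus $f^{2} \equiv z_{1}^{2 l_{1}} \cdots z_{n}^{2 l_{n}}$ modulo $T_{\mathbb{Z}_{2}}(M_{2}(K))$.

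With this in hand, split according to the parity of $S := k_{1}+\cdots+k_{n}$. If $S$ is odd, set $f := z_{1}^{k_{1}} \cdots z_{n}^{k_{n}} \in K\langle X\rangle_{\overline{1}}$; the $\mathbb{Z}_{2}$-graded endomorphism sending $z_{1} \mapsto f$ maps the generator $z_{1}^{2}$ of $V_{1}$ to $f^{2}$, so $m_{1} := f^{2}$ lies in $V_{1}$, and the key observation gives $m_{1} \equiv m$ modulo $T_{\mathbb{Z}_{2}}(M_{2}(K))$. If $S$ is even, set instead $f := z_{1}^{k_{1}-1} z_{2}^{k_{2}} \cdots z_{n}^{k_{n}}$, whose exponent sum $S-1$ is odd, so $f \in K\langle X\rangle_{\overline{1}}$. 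The endomorphism $z_{1} \mapsto z_{1}$, $z_{2} \mapsto f$ sends the generator $z_{1}^{2} z_{2}^{2}$ of $V_{1}$ to $m_{1} := z_{1}^{2} f^{2} \in V_{1}$; since $T_{\mathbb{Z}_{2}}(M_{2}(K))$ is an ideal, the key observation applied to $f$ yields $m_{1} = z_{1}^{2} f^{2} \equiv z_{1}^{2} \cdot z_{1}^{2(k_{1}-1)} z_{2}^{2k_{2}} \cdots z_{n}^{2k_{n}} = z_{1}^{2k_{1}} z_{2}^{2k_{2}} \cdots z_{n}^{2k_{n}} = m$ modulo $T_{\mathbb{Z}_{2}}(M_{2}(K))$.

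The only step requiring an actual computation is the key observation: that $(A_{1}^{k_{1}} \cdots A_{n}^{k_{n}})^{2}$ collapses to the scalar $\bigl(\prod_{i}(a_{i}b_{i})^{k_{i}}\bigr) I$ whenever $\sum k_{i}$ is odd. After pulling out the scalars $A_{i}^{k_{i}} = (a_{i}b_{i})^{k_{i}/2} I$ coming from the even exponents, this reduces to showing that an ordered product of an odd number $t$ of anti-diagonal matrices $B_{1}, \ldots, B_{t}$ with $B_{j} = a_{j} e_{12} + b_{j} e_{21}$ satisfies $(B_{1} \cdots B_{t})^{2} = \bigl(\prod_{j=1}^{t} a_{j} b_{j}\bigr) I$, which is an elementary induction on $t$ and is also a direct consequence of Lemma \ref{felipe0}. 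This is the sole technical ingredient; once it is in place the lemma follows at once.
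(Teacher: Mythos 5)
Your proof is correct; note that the paper itself gives no argument here (it explicitly leaves Lemmas \ref{luis2} and \ref{luis3} ``as an exercise''), so there is nothing to diverge from. Your route is the natural one: the generators $z_{1}^{2}$ and $z_{1}^{2}z_{2}^{2}$ of $V_{1}$ may be hit with the $\mathbb{Z}_{2}$-graded endomorphism sending a degree-$\overline{1}$ variable to the odd-length monomial $f$, which keeps $m_{1}$ inside the $T_{\mathbb{Z}_{2}}$-space (rather than merely the $T$-ideal), and the identification $m_{1}\equiv m$ rests only on the computation that a product of an odd number of anti-diagonal $2\times 2$ matrices squares to $\bigl(\prod_{j}a_{j}b_{j}\bigr)I$, which you verify and which also follows from Lemma \ref{felipe0}. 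You are also careful about the one genuinely delicate point, namely that $f^{2}$ and $z_{1}^{2}f^{2}$ are not literally the words $z_{1}^{2k_{1}}\cdots z_{n}^{2k_{n}}$ in the free algebra, so the equality must be read modulo $T_{\mathbb{Z}_{2}}(M_{2}(K))$ (using that it is an ideal to multiply the congruence by $z_{1}^{2}$); the parity split on $k_{1}+\cdots+k_{n}$, with the replacement $k_{1}\mapsto k_{1}-1$ in the even case, correctly guarantees $f$ is a nonempty monomial of degree $\overline{1}$ in all cases. I see no gap.
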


\begin{lemma}\label{luis3}
Let $p > 2$ and let $x_{1},\cdots,x_{n} \in K\langle X \rangle_{i},
i \neq 0$. Let $m = x_{1}^{p.k_{1}}\cdots x_{n}^{p.k_{n}}$ be a
monomial, where $k_{1},\cdots,k_{n} \in \mathbb{N} - \{0\}$. Then
there exists $m_{1} \in \langle V_{1}\rangle^{T_{\mathbb{Z}_{p}}}$
such that $m - m_{1} \equiv 0$ modulo
$T_{\mathbb{Z}_{p}}(M_{p}(K))$.
\end{lemma}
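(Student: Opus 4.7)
The plan is to exhibit $m_1 \in V_1$ as the image of one of the generators from Lemma \ref{felipe15.5}---either $y_1^p$ or $y_1^p y_2^p$---under a suitable graded endomorphism sending each $y_j$ to a monomial in the $x_i$'s, and then to show that $m - m_1 \in T_{\mathbb{Z}_p}(M_p(K))$. The case split uses only $l=1$ or $l=2$, which is allowed because $p>2$ forces $2\leq p-1$.

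The key technical ingredient is the following sub-lemma, which I would prove first: if $z_1, \ldots, z_r$ are variables (possibly with repetitions) all of the same nonzero $\mathbb{Z}_p$-degree $g$ and $p \nmid r$, then
\[ (z_1 z_2 \cdots z_r)^p \equiv z_1^p z_2^p \cdots z_r^p \pmod{T_{\mathbb{Z}_p}(M_p(K))}. \]
The proof is a direct generic-matrix calculation via Lemma \ref{felipe0}: both sides have total degree $prg \equiv 0$ in $\mathbb{Z}_p$ and are therefore diagonal; the $(k,k)$-entry of $(z_1 \cdots z_r)^p$ is a product over $i \in \{1,\ldots,r\}$ and $j \in \{0,\ldots,p-1\}$ of generic variables whose upper index is $k+(i-1)g+jrg$. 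Because $p\nmid r$ and $g\neq 0$, the map $j \mapsto jrg$ is a permutation of $\mathbb{Z}_p$, so this inner product coincides with the $(k+(i-1)g,\,k+(i-1)g)$-entry of $z_i^p$; and since each $z_i^p$ is central in $M_p(K)$ by Lemma \ref{felipe15.5} applied with $l=1$, every diagonal entry of $z_i^p$ is the same, matching $z_1^p \cdots z_r^p$ at position $(k,k)$.

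With the sub-lemma in hand, set $K = k_1 + \cdots + k_n$. If $p\nmid K$, take $f = x_1^{k_1} x_2^{k_2} \cdots x_n^{k_n}$; it has length $K$ and nonzero degree $Kg$, so the sub-lemma, combined with the centrality of each $x_i^p$ (used to group powers by variable), gives $f^p \equiv m$ modulo $T_{\mathbb{Z}_p}(M_p(K))$, and substituting $y_1 \mapsto f$ in the generator $y_1^p$ (with $\alpha(y_1):=Kg$) exhibits $f^p \in V_1$. If $p\mid K$, take $f_1 = x_1$ and $f_2 = x_1^{k_1-1} x_2^{k_2} \cdots x_n^{k_n}$; their lengths $1$ and $K-1$ are coprime to $p$, and their degrees $g$ and $(K-1)g$ are nonzero and distinct (the distinctness using $p>2$ and $K\equiv 0 \pmod p$, which forces $K\not\equiv 2 \pmod p$). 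Applying the sub-lemma to $f_2$ and then using centrality of the $x_i^p$ to collect powers yields $f_1^p f_2^p \equiv m$ modulo $T_{\mathbb{Z}_p}(M_p(K))$, while substituting $(y_1,y_2) \mapsto (f_1,f_2)$ in the generator $y_1^p y_2^p$ puts $f_1^p f_2^p$ in $V_1$.

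The main obstacle is the sub-lemma itself, where one has to carefully track the bijection $j \mapsto jrg$ of $\mathbb{Z}_p$ and combine it with the centrality of single-variable $p$-th powers to conclude that the diagonal entries on both sides agree; once this is settled, the rest of the argument is a straightforward case split on $K \bmod p$, and the choice $l\in\{1,2\}$ ensures the generators used actually belong to the generating set from Lemma \ref{felipe15.5}.
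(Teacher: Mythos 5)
The paper gives no proof of this lemma (it is explicitly ``left as an exercise''), so there is nothing to compare against; judged on its own merits your argument is correct and complete, and it is exactly in the spirit of the surrounding text: your sub-lemma generalizes Lemma \ref{felipe16.5} from $r=2$ to arbitrary $r$ with $p\nmid r$ by the same generic-matrix computation (the permutation $j\mapsto jrg$ of $\mathbb{Z}_{p}$ making every diagonal entry equal to $\prod_{i}\prod_{t\in\mathbb{Z}_{p}}y_{g,i}^{t}$), and the case split on $k_{1}+\cdots+k_{n}\bmod p$ correctly feeds the result into the $l=1$ and $l=2$ generators of $V_{1}$ coming from Lemma \ref{felipe15.5}, whose hypotheses (nonzero, pairwise distinct degrees, $l\leq p-1$) you verify using $p>2$. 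Only cosmetic quibbles: your symbol $K$ for the exponent sum collides with the ground ring; the sub-lemma should be stated for distinct variables and then specialized to handle repetitions; and in the case $p\nmid k_{1}+\cdots+k_{n}$ no appeal to centrality is needed to regroup powers, since equal variables are already adjacent in $z_{1}^{p}\cdots z_{r}^{p}$.
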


\begin{lemma}\label{felipe22}
Let $m = x_{i_{1}} \cdots x_{i_{r}} \in
C_{\mathbb{Z}_{p}}(M_{p}(K))$. Then there exists $m_{1} \in \langle
V_{1}\rangle^{T_{\mathbb{Z}_{p}}}$ such that $m - m_{1} \equiv 0$
modulo $T_{\mathbb{Z}_{p}}(M_{p}(K))$.
\end{lemma}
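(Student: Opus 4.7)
The approach is to establish first that every variable appearing in a central monomial $m$ has multiplicity divisible by $p$, and then to exploit this combinatorial constraint together with the congruences of $J_{1}$ to rewrite $m$ as a product of $p$-th powers of monomials of nonzero $\mathbb{Z}_{p}$-degree. Once such a presentation is obtained, it lies in $\langle V_{1}\rangle^{T_{\mathbb{Z}_{p}}}$ either directly, via a graded substitution into the generator $x_{1}^{p}\cdots x_{t}^{p}$ of $V_{1}$, or after collapsing equal-degree blocks via Lemmas \ref{luis2} and \ref{luis3}.

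For the first step I would write $m = x_{i_{1}}\cdots x_{i_{r}}$ and $T_{j-1} = \alpha(x_{i_{1}})+\cdots+\alpha(x_{i_{j-1}}) \in \mathbb{Z}_{p}$, and apply Lemma \ref{felipe0}: the $(k,k)$-entry of $m$ evaluated on generic matrices is the commutative monomial $\prod_{j=1}^{r} y_{\alpha(x_{i_{j}}), i_{j}}^{k+T_{j-1}}$. Centrality forces this expression to be independent of $k$ in $\Omega$, and because distinct superscripts produce distinct indeterminates, for each individual variable appearing in $m$ the multiset of attached superscripts must be invariant under the shift $s \mapsto s+1$ of $\mathbb{Z}_{p}$. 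Since $p$ is prime, such multisets are necessarily uniform on $\mathbb{Z}_{p}$, which forces the multiplicity of every variable in $m$ to be divisible by $p$ and says that the prefix-sum trajectory of $m$ visits each residue of $\mathbb{Z}_{p}$ the same number of times.

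In the second step I would feed this data into Lemmas \ref{felipe5.5}, \ref{felipe14.6}, and \ref{felipe16.5} to rearrange $m$. The balanced trajectory from Step 1 lets one choose a target monomial
\[ \tilde{m} = n_{1}^{p}\, n_{2}^{p}\, \cdots\, n_{t}^{p} \]
with each $n_{i}$ a monomial of nonzero $\mathbb{Z}_{p}$-degree, whose $(1,1)$-entry on generic matrices agrees with that of $m$; Lemma \ref{felipe5.5} then supplies the congruence $m \equiv \tilde{m}$ modulo $J_{1} \subset T_{\mathbb{Z}_{p}}(M_{p}(K))$. The construction of $\tilde{m}$ cuts $m$ into $p$ arcs via Lemma \ref{felipe14.6} and uses Lemma \ref{felipe16.5} to consolidate $p$-th powers of equal-degree factors, producing a factorization in which each variable appears in each of the $p$ arcs equally often. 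Once $\tilde{m}$ is in place, Lemmas \ref{luis2} (for $p=2$) and \ref{luis3} (for $p>2$) finish the passage to $\langle V_{1}\rangle^{T_{\mathbb{Z}_{p}}}$.

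The principal obstacle is the explicit rearrangement in Step 2: the available identities $(1)$, $(2)$, and Lemma \ref{felipe16.5} commute variables only very restrictively, so one cannot simply sort the factors of $m$ by degree. The uniformity of superscripts from Step 1 is precisely what makes the construction of $\tilde{m}$ possible---it identifies $m$ with a balanced closed walk on $\mathbb{Z}_{p}$ that can be sliced into $p$ congruent arcs---and Lemma \ref{felipe5.5} is what upgrades ``same $(1,1)$-entry'' to genuine $J_{1}$-congruence, which is where the non-triviality really lives.
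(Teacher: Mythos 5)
Your proposal is correct and follows essentially the same route as the paper: centrality of $m$ forces the superscript multisets in the generic $(k,k)$-entries to be shift-invariant, hence every variable has multiplicity divisible by $p$; one then writes down a product of $p$-th powers with the same generic $(1,1)$-entry, upgrades this to a congruence modulo $J_{1}$ via Lemma \ref{felipe5.5}, and concludes with Lemmas \ref{luis2} and \ref{luis3}. The only cosmetic differences are that the paper's target monomial is simply the sorted word $x_{l_{1}}^{k_{1}}\cdots x_{l_{q}}^{k_{q}}$ (with the degree-zero variables absorbed into one $p$-th power block in a separate case), so it never needs Lemmas \ref{felipe14.6} or \ref{felipe16.5} at this stage.
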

\begin{proof}
Notice that $m \notin T_{\mathbb{Z}_{p}}(M_{p}(K))$. Moreover, at
least one variable of $m$ has $\mathbb{Z}_{p}$-degree different than
$0$.

By hypothesis, $m$ is a $\mathbb{Z}_{p}$-graded central monomial of
$M_{p}(K)$. Consequently, using the Lemma \ref{felipe0}, it follows
that:
\begin{center}
$y_{\alpha(x_{i_{1}}),i_{1}}^{i}y_{\alpha(x_{i_{2}}),i_{2}}^{i +
\alpha(x_{i_{1}})}\cdots y_{\alpha(x_{i_{r}}),i_{r}}^{i +
\alpha(x_{i_{1}} + \cdots x_{i_{r-1}})} =
y_{\alpha(x_{i_{1}}),i_{1}}^{j}y_{\alpha(x_{i_{2}}),i_{2}}^{j +
\alpha(x_{i_{1}})}\cdots y_{\alpha(x_{i_{r}}),i_{r}}^{j +
\alpha(x_{i_{1}} + \cdots x_{i_{r-1}})}$
\end{center}
for any $i,j \in \widehat{p}$.

Let $x_{l_{1}},\cdots,x_{l_{q}}$ be all the different variables of
the monomial $m$. $k_{i}$ denotes the ordinary degree of $m$ with
respect to variable $x_{l_{i}}$. Note that, for each $i \in
\widehat{q}$, $k_{i}$ is a multiple of $p$.

Case 1: $\alpha(x_{l_{i}}) \neq 0$ for all $i \in \widehat{q}$. Let
$x_{l_{1}}^{k_{1}}\cdots x_{l_{q}}^{k_{q}}$ be a monomial and let
$A_{l_{1}} \in R_{\alpha(x_{l_{1}})},\cdots,A_{l_{q}} \in
R_{\alpha(x_{l_{q}})}$ be generic matrices. Evidently, the matrices
$A_{l_{1}}^{k_{1}}\cdots A_{l_{q}}^{k_{q}}$ and
$m(A_{l_{1}},\cdots,A_{l_{q}})$ have in position $(1,1)$, the same
non-zero entry. Therefore, by Lemma \ref{felipe5.5}, $m \equiv
x_{l_{1}}^{k_{1}}\cdots x_{l_{q}}^{k_{q}} \ \ mod \ \ J_{1}$.
Applying Lemma \ref{luis2} or Lemma \ref{luis3}, we are done.

Case 2: there exists $i \in \widehat{q}$ such that
$\alpha(x_{l_{i}}) = 0$. Suppose that all variables of $G$-degree
$0$ are $\{x_{l_{1}},\cdots,x_{l_{s}}\}$. Choose $m_{1}
=(x_{l_{s+1}}x_{l_{1}}^{k_{l_{1}}}\cdots
x_{l_{s}}^{k_{s}})^{p}x_{l_{s+1}}^{k_{s+1} -
p}x_{l_{s+2}}^{k_{s+2}}\cdots x_{l_{q}}^{k_{l_{q}}}$. Evidently,
$m_{1}(A_{l_{1}},\cdots,A_{l_{q}})$ and
$m(A_{i_{1}},\cdots,A_{i_{r}})$ have in position $(1,1)$ the same
non-zero entry. Applying the ideas of previous case, the result
follows.
\end{proof}

\begin{lemma}\label{felipe17}
Let $m = x_{i_{1}}\cdots x_{i_{q}} \in K\langle X \rangle_{0} -
(C_{\mathbb{Z}_{p}}(M_{p}(K)))\cap(K\langle X \rangle_{0})$. All
entries in the diagonal of $A_{i_{1}}.\cdots.A_{i_{q}}$ are non-zero
and pairwise distinct.
\end{lemma}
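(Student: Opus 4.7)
The plan is to exploit the explicit formula of Lemma \ref{felipe0} for the product $m(A_{i_1},\ldots,A_{i_q})$ and then reduce the question of equality among diagonal entries to a stabilizer calculation in $\mathbb{Z}_p$. Since $\alpha(m) = 0$ and the $\mathbb{Z}_p$-canonical grading on $M_p(K)$ has full support, the index set $L$ of Lemma \ref{felipe0} equals $\widehat{p}$, and
\[
m(A_{i_1},\ldots,A_{i_q}) = \sum_{k=1}^{p} w_k\, e_{kk}, \qquad w_k = \prod_{l=1}^{q} y_{h_l,i_l}^{\,s_l^k},
\]
with $h_l = \alpha(x_{i_l})$ and $s_l^k \equiv k + \sigma_l \pmod{p}$, where $\sigma_l = h_1 + \cdots + h_{l-1}$. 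Each $w_k$ is a nonzero monomial in the polynomial ring $\Omega$ (since $K$ is an integral domain), which already settles the non-vanishing part of the claim.

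For pairwise distinctness, I would group the positions of $m$ according to which free variable $x_a$ occupies them: set $L_a = \{\,l : i_l = a\,\}$ for each $a$ occurring in $m$, and let $T_a$ denote the multiset $\{\,\sigma_l : l \in L_a\,\}$ inside $\mathbb{Z}_p$. Because distinct variables $x_a, x_b$ contribute algebraically independent families of indeterminates $\{y_{h_a,a}^{\cdot}\}$ and $\{y_{h_b,b}^{\cdot}\}$ to $\Omega$, the equality $w_k = w_{k'}$ in $\Omega$ factors variable by variable and is equivalent to the condition that $T_a + k = T_a + k'$ (as multisets in $\mathbb{Z}_p$) for every $a$. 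In other words, $w_k = w_{k'}$ exactly when $k - k'$ lies in the common stabilizer $H = \bigcap_a \mathrm{Stab}(T_a) \le \mathbb{Z}_p$.

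The decisive step is the primality dichotomy: $H$ is a subgroup of $\mathbb{Z}_p$, so $H = \{0\}$ or $H = \mathbb{Z}_p$. If $H = \mathbb{Z}_p$, then $w_k = w_1$ for every $k$, so $m(A_{i_1},\ldots,A_{i_q}) = w_1 \cdot I$ is a scalar matrix; by Corollary \ref{felipe11}, $m \in C_{\mathbb{Z}_p}(R) = C_{\mathbb{Z}_p}(M_p(K))$, contradicting the hypothesis. Hence $H = \{0\}$, and the map $k \mapsto w_k$ from $\widehat{p}$ to $\Omega$ is injective, yielding pairwise distinct diagonal entries.

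The main obstacle is the bookkeeping in the second paragraph: one has to keep the superscript $s_l^k$ (which depends on both the position $l$ within the word and the diagonal index $k$) separate from the subscripts $(h_l, i_l)$ that identify the relevant indeterminate of $\Omega$, and verify that the polynomial equality really does decouple across the free variables $x_a$. Once this is set up cleanly, the rest is a short group-theoretic argument driven entirely by the primality of $p$.
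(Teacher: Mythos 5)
Your proof is correct and follows essentially the same route as the paper: both read off the diagonal entries from Lemma \ref{felipe0}, decouple the monomial equality $w_k = w_{k'}$ across the distinct free variables, and then invoke the fact that a translation stabilizer in $\mathbb{Z}_p$ is either trivial or everything --- which is exactly the content of the paper's Lemma \ref{felipe16.9} --- with non-centrality of $m$ ruling out the second alternative. Your stabilizer-subgroup phrasing (and the care with multisets $T_a$) is if anything a cleaner write-up of the argument the paper only sketches.
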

\begin{proof}
(Sketches) According to Lemma \ref{felipe0}, all entries in the
diagonal are non-zero. If $p = 2$, the analysis is obvious.

Henceforth, suppose that $p > 2$. By hypothesis, $x_{i_{1}}\cdots
x_{i_{q}} \in K\langle X \rangle_{0} -
C_{\mathbb{Z}_{p}}(M_{p}(K))\cap K\langle X \rangle_{0}$. Thus, the
following condition is satisfied: there exist $j_{1} < \cdots <
j_{l} \in \widehat{q}$, where $x_{i_{j_{1}}} = \cdots =
x_{i_{j_{l}}}$ and $x_{i_{t}} \neq x_{i_{j_{1}}} \ \ \mbox{for all}
\ t \ \in \ \widehat{q} - \{j_{1},\dots,j_{l}\}$ such that:

\begin{center}
$y_{\alpha(x_{i_{j_{1}}}),i_{j_{1}}}^{k_{1} +
\alpha(x_{i_{1}})+\cdots + \alpha(x_{i_{j_{1} - 1}})} \cdots
y_{\alpha(x_{i_{j_{l}}}),i_{j_{l}}}^{k_{1} +
\alpha(x_{i_{1}})+\cdots + \alpha(x_{i_{j_{l} - 1}})} \neq \newline
y_{\alpha(x_{i_{j_{1}}}),i_{j_{1}}}^{k_{2} +
\alpha(x_{i_{1}})+\cdots + \alpha(x_{i_{j_{1} - 1}})} \cdots
y_{\alpha(x_{i_{j_{l}}}),i_{j_{l}}}^{k_{2} +
\alpha(x_{i_{1}})+\cdots + \alpha(x_{i_{j_{l} - 1}})}.$
\end{center}

By Lemma \ref{felipe16.9} and some calculations, we may conclude
that:

\begin{center}
$y_{\alpha(x_{i_{1}}),i_{1}}^{q_{1} + \alpha(x_{i_{1}})+ \cdots +
\alpha(x_{i_{j_{1} - 1}})} \cdots
y_{\alpha(x_{i_{j_{l}}}),i_{j_{l}}}^{q_{1} +
\alpha(x_{i_{1}})+\cdots + \alpha(x_{i_{j_{l} - 1}})} \neq
y_{\alpha(x_{i_{1}}),i_{1}}^{q_{2} + \alpha(x_{i_{1}})+ \cdots +
\alpha(x_{i_{j_{1} - 1}})} \cdots
y_{\alpha(x_{i_{j_{l}}}),i_{j_{l}}}^{q_{2} +
\alpha(x_{i_{1}})+\cdots + \alpha(x_{i_{j_{l} - 1}})}$ for all
$q_{1} \neq q_{2} \in \mathbb{Z}_{p}$.
\end{center}

The proof is complete.
\end{proof}

In what follows, we use substantially the proof of Theorem 6 used by
Brandão Júnior \cite{Junior}.

\begin{theorem}\label{felipe23}
The $\mathbb{Z}_{p}$-graded central polynomials of $M_{p}(K)$ follow
from:
\begin{center}
$z_{1}(x_{1}x_{2} - x_{2}x_{1})z_{2} \ \ \mbox{when} \ \
\alpha(x_{1}) = \alpha(x_{2})
= \overline{0} \ \ (8);$ \\
$z_{1}(x_{1}x_{2}x_{3} - x_{3}x_{2}x_{1})z_{2} \ \ \mbox{when} \ \ \alpha(x_{1}) = - \alpha(x_{2}) = \alpha(x_{3}) \neq \overline{0} \ \ (9);$ \\

The monomials cited in the definition \ref{felipe16} \ \ $(10)$; \\

$\sum_{\sigma \in H_{p}} x_{\sigma(1)}\cdots x_{\sigma(p)}$, where
$(\alpha(x_{1}),\cdots,\alpha(x_{p}))$ is a complete sequence of
$\mathbb{Z}_{p} \ \ (11)$.
\end{center}
The monomials $z_{1},z_{2} \in \bigcup_{g \in \mathbb{Z}_{p}}
X_{g}$.
\end{theorem}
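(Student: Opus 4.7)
The plan is to transcribe Brand\~ao J\'unior's proof of his Theorem 6 in \cite{Junior}, replacing each evaluation in $M_{p}(K)$ by an evaluation of the generic matrices $A^{h}_{i}$ of Section~\ref{silva}; this is the device that upgrades the infinite-field argument to the integral-domain setting. Let $V$ denote the $T_{\mathbb{Z}_{p}}$-space generated by $(8)$--$(11)$. The inclusion $V\subseteq C_{\mathbb{Z}_{p}}(M_{p}(K))$ is immediate from earlier material: $(8)$ and $(9)$ are substitution instances of the identities $(1)$ and $(2)$ of Lemma~\ref{luis1}, and hence are central; the monomials in $(10)$ are central by Lemmas~\ref{felipe15.49} and \ref{felipe15.5}; and $(11)$ is central by Lemma~\ref{felipe14.5}.

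For the reverse containment, take $f\in C_{\mathbb{Z}_{p}}(M_{p}(K)) = C_{\mathbb{Z}_{p}}(R)$ (Corollary~\ref{felipe11}), and by Lemma~\ref{felipe12} assume $f$ multi-homogeneous. If $\alpha(f)\neq\overline{0}$, then $f\in T_{\mathbb{Z}_{p}}(M_{p}(K))$, since $Z(M_{p}(K))=K\cdot I$ is concentrated in $\mathbb{Z}_{p}$-degree $\overline{0}$; the corollary to the main theorem describing $T_{\mathbb{Z}_{p}}(M_{p}(K))$ then forces $f$ to be a consequence of $(1), (2)$, hence of $(8), (9)$. So we may assume $\alpha(f)=\overline{0}$ and write $f=\sum_{i=1}^{l}\lambda_{i}m_{i}$ with $\lambda_{i}\in K\setminus\{0\}$ and distinct monomials $m_{i}$. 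We argue by strong induction on $l$.

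If some $m_{i_{0}}$ is itself $\mathbb{Z}_{p}$-graded central, Lemma~\ref{felipe22} places $m_{i_{0}}$ inside $V$; subtracting $\lambda_{i_{0}}m_{i_{0}}$ from $f$ leaves a central polynomial with fewer summands, and induction closes the case. Otherwise no $m_{i}$ is central, and by Lemma~\ref{felipe17} the diagonal entries of each $m_{i}(A_{\cdot})\in M_{p}(\Omega)$ are pairwise distinct. Centrality of $f$ then translates into the identity $\sum_{i}\lambda_{i}(m_{i}(A_{\cdot}))_{kk} = \sum_{i}\lambda_{i}(m_{i}(A_{\cdot}))_{k'k'}$ in $\Omega$ for all $k,k'\in\widehat{p}$. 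Exploiting the $K$-linear independence of $y$-monomials in $\Omega$, together with repeated applications of Lemma~\ref{felipe5.5} (which throws any pair of summands agreeing at a common diagonal position into $J_{1}\subseteq V$), and invoking Lemma~\ref{felipe14.6} to factor each non-central $m_{i}$ as a product of $p$ pieces whose degrees form a complete sequence, the summands of $f$ organise into full $H_{p}$-orbits of such factorisations. Each such orbit is a substitution instance of $(11)$, so peeling one off from $f$ strictly decreases $l$ and the induction terminates.

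The main obstacle is this last combinatorial extraction step. The walks associated with individual $m_{i}$'s need not visit all $p$ rows from any single starting position---this already happens for $p>2$ and short monomials such as $x_{1}x_{2}$ with $\alpha(x_{1})=-\alpha(x_{2})\neq\overline{0}$---so Lemma~\ref{felipe14.6} cannot be applied to $m_{i}$ in isolation. The remedy is to combine the $y$-monomials coming from several different $m_{j}$'s evaluated at different starting rows, using crucially that $\mathbb{Z}_{p}$ is cyclic of prime order so that every nonzero element generates the whole group; this is what forces the full $H_{p}$-orbit structure on the collection of non-central summands and allows us to exhibit one instance of $(11)$ at each inductive step.
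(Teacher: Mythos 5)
Your overall strategy coincides with the paper's: reduce to a multi-homogeneous $f$ of $\mathbb{Z}_{p}$-degree $\overline{0}$, strip off the central summands via Lemma \ref{felipe22}, invoke Lemma \ref{felipe17} for the distinctness of the diagonal entries, and peel off instances of $(11)$ by induction on the number of summands. However, the step you explicitly leave open --- why the non-central summands organise into full $H_{p}$-orbits when Lemma \ref{felipe14.6} is not applicable to a single $m_{i}$ --- is precisely the heart of the proof, and the remedy you sketch points in the wrong direction. The fact that every nonzero element generates $\mathbb{Z}_{p}$ is not what forces the orbit structure: that input has already been consumed in Lemma \ref{felipe17} (via Lemma \ref{felipe16.9}), and in any case the partial sums of the degrees of a non-central monomial need not exhaust $\mathbb{Z}_{p}$ even though each individual nonzero degree generates it, as your own example $x_{1}x_{2}$ with $\alpha(x_{1})=-\alpha(x_{2})$ shows.

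What actually closes the gap is the following chain, which is how the paper argues. Write $w$ for the $(1,1)$-entry of $m_{i_{1}}(A_{\cdot})$. Since $F_{1}=\cdots=F_{p}$, since the monomials of $\Omega$ are $K$-linearly independent, and since two distinct summands cannot share a diagonal entry at the same position (Lemma \ref{felipe5.5} combined with the normalisation $m_{i}-m_{j}\notin T_{\mathbb{Z}_{p}}(M_{p}(K))$), for every $l_{2}\in\widehat{p}$ there is a summand $m_{i_{l_{2}}}$ with $\lambda_{i_{l_{2}}}=\lambda_{i_{1}}$ whose $(l_{2},l_{2})$-entry equals $w$. By Lemma \ref{felipe0} the $(l_{2},l_{2})$-entry of any monomial evaluation has first factor of the form $y_{\ast,\ast}^{l_{2}}$; hence $w$ contains a variable of upper index $l_{2}$ for every $l_{2}$, i.e.\ the walk of $m_{i_{1}}$ starting at row $1$ visits all $p$ rows. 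This is exactly the hypothesis of Lemma \ref{felipe14.6}, which then factors $m_{i_{1}}=r_{1}\cdots r_{p}$ with $(\alpha(r_{1}),\dots,\alpha(r_{p}))$ a complete sequence, and Lemma \ref{felipe5.5} identifies each $m_{i_{l_{2}}}$ with the appropriate cyclic shift $r_{\sigma(1)}\cdots r_{\sigma(p)}$ modulo $T_{\mathbb{Z}_{p}}(M_{p}(K))$. Subtracting $\lambda_{i_{1}}\sum_{\sigma\in H_{p}}r_{\sigma(1)}\cdots r_{\sigma(p)}$, an instance of $(11)$, removes $p$ summands and the induction proceeds. Until you supply this argument (or an equivalent one), the extraction step remains a genuine gap.
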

\begin{proof}
Let $W$ be the $T_{\mathbb{Z}_{p}}$-space generated  by
$(8),(9),(10)$ and $(11)$. We prove that
$C_{\mathbb{Z}_{p}}(M_{p}(K)) \subset W$. Let $f(x_{1},\cdots,x_{q})
= \sum_{i=1}^{l}\lambda_{i}m_{i} \in C_{\mathbb{Z}_{p}}(M_{p}(K)) -
T_{\mathbb{Z}_{p}}(M_{p}(K))$. By Lemma \ref{felipe12} and Corollary
\ref{felipe11}, we may assume that $f$ is a multi-homogeneous
polynomial. We may suppose that $\alpha(m_{1}) = \cdots =
\alpha(m_{l}) = 0,$ $m_{i} - m_{j}$ is not an element of
$T_{\mathbb{Z}_{p}}(M_{p}(K))$ and each $m_{i} \notin
C_{\mathbb{Z}_{p}}(M_{p}(K))$ (Lemma \ref{felipe22}).

Let $A_{1} \in R_{\alpha(x_{1})},\cdots,A_{q}\in R_{\alpha(x_{q})}$
be generic matrices. So $f(A_{1},\cdots,A_{q}) =
diag(F_{1},\cdots,F_{p})$, where $F_{1} = \cdots = F_{p} \neq 0$. By
Lemma \ref{felipe17}, for each $j \in \widehat{p}$, all positions in
the diagonal of the matrix $m_{j}(A_{1},\cdots,A_{q})$ have non-zero
entries. Furthermore, these entries are pairwise distinct.

Reordering the indices, if necessary, there exist $1 \leq i_{1} <
\cdots < i_{p} \leq l$ such that $\lambda_{i_{1}} = \cdots =
\lambda_{i_{p}} \neq 0$ and
$e_{11}m_{i_{1}}(A_{1},\cdots,A_{q})e_{11} =
e_{1l_{2}}m_{i_{l_{2}}}(A_{1},\cdots,A_{q})e_{l_{2}1}$ for all
$l_{2} \in \widehat{p} - \{1\}$. Assume that $m_{i_{1}} =
x_{j_{1}}\cdots x_{j_{s}}$ and the entry in position $(1,1)$ of
$m_{i_{1}}(A_{1},\cdots,A_{q})$ is
$y_{\alpha(x_{j_{1}}),j_{1}}^{a_{1}}\cdots
y_{\alpha(x_{j_{s}}),j_{s}}^{a_{s}}$. Notice that the multi-set
$\{a_{1},\cdots,a_{s}\}$ contains $\widehat{p}$.

According to Lemma \ref{felipe14.6}, there are monomials
$r_{1},\cdots,r_{p}$ such that $m_{i_{1}} = r_{1}\cdots r_{p}$ where
$(\alpha(r_{1}),\cdots,\alpha(r_{p}))$ is a complete sequence of
$\mathbb{Z}_{p}$. For each $j \in \widehat{p}$, there is a unique
permutation $\sigma \in H_{p}$ such that the matrices:
\begin{center}
$m_{i_{j}}(A_{1},\cdots,A_{q}) \ \ \mbox{and} \ \
r_{\sigma(1)}\cdots r_{\sigma(p)}(A_{1},\cdots,A_{q})$
\end{center}
have, in the position $(1,1)$, the same non-zero entry. By Lemma
\ref{felipe5.5}:
\begin{center}
$m_{i_{j}} \equiv r_{\sigma(1)}\cdots r_{\sigma(p)} \ \ mod \ \
T_{\mathbb{Z}_{p}}(M_{p}(K))$.
\end{center}
According to Lemma \ref{felipe14.5}, it is clear that:
\begin{center}
$g(x_{1},\cdots,x_{r}) = \lambda_{i_{1}}(\sum_{\sigma \in
H_{p}}r_{\sigma(1)}\cdots r_{\sigma(p)}) \in W$.
\end{center}

Then, $f - g \equiv f - \lambda_{i_{1}}(m_{i_{1}} + m_{i_{2}} +
\cdots + m_{i_{p}})$ modulo $T_{\mathbb{Z}_{p}}(M_{p}(K))$. If $l -
p = 0$, it follows that $f \in W$. If $l - p \geq p$ or $1 \leq l -
p \leq p-1$, the same argument can be repeated. From an inductive
argument on $l$, the result follows.
\end{proof}

\section{$\mathbb{Z}$-graded central polynomials of $M_{n}(K)$}

In this section, we use the same technique as in previous section to
detail a script of the proof. The first Lemma is similar to Lemma
\ref{felipe17}.

\begin{lemma}\label{felipe20}
Let $x_{i_{1}}.\cdots .x_{i_{m}} \in K\langle X\rangle_{0}$.

If
$A_{i_{1}}^{\alpha(x_{i_{1}})}.\cdots.A_{i_{m}}^{\alpha(x_{i_{m}})}$
is a non-zero matrix, then all non-zero entries of that matrix are
pairwise distinct.
\end{lemma}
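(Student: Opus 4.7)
The plan is to compute the product directly from Lemma \ref{felipe0} and then exploit that $\Omega = K[Y]$ is a polynomial ring over an integral domain. Set $\alpha_{j} := \alpha(x_{i_{j}})$ and $T_{j} := \alpha_{1} + \cdots + \alpha_{j-1}$, so $T_{1} = 0$ and $T_{m+1} = 0$ (the latter because the monomial lies in $K\langle X\rangle_{0}$). Since we work in the canonical $\mathbb{Z}$-grading, $g_{k} = k$, so Definition \ref{def} gives $s_{j}^{k} = k + T_{j}$ whenever $k$ belongs to the set $L$ of indices for which $k + T_{j} \in \widehat{n}$ for every $j$. Lemma \ref{felipe0} then yields
\[
A_{i_{1}}^{\alpha_{1}} \cdots A_{i_{m}}^{\alpha_{m}} \;=\; \sum_{k \in L} w_{k}\, e_{k,k}, \qquad w_{k} \;=\; \prod_{j=1}^{m} y_{\alpha_{j},\, i_{j}}^{\,k + T_{j}},
\]
because $s_{m+1}^{k} = k + T_{m+1} = k$; in particular the product is diagonal. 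The task reduces to showing that the $w_{k}$ are pairwise distinct as $k$ ranges over $L$.

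The key step is a counting argument. Suppose for contradiction that $w_{k} = w_{k'}$ for some distinct $k, k' \in L$. Since $\Omega = K[Y]$ is a polynomial ring over an integral domain, two products of generators coincide if and only if their multisets of factors coincide; hence there exists a permutation $\pi \in S_{m}$ such that, for every $j \in \widehat{m}$,
\[
y_{\alpha_{\pi(j)}, i_{\pi(j)}}^{\,k + T_{\pi(j)}} \;=\; y_{\alpha_{j}, i_{j}}^{\,k' + T_{j}}.
\]
Matching the triples of indices forces $(\alpha_{\pi(j)}, i_{\pi(j)}) = (\alpha_{j}, i_{j})$ together with the numerical equality $T_{\pi(j)} - T_{j} = k' - k$, valid for every $j \in \widehat{m}$.

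The contradiction then falls out of a summation trick: summing $T_{\pi(j)} - T_{j} = k' - k$ over $j = 1, \ldots, m$ gives
\[
m(k' - k) \;=\; \sum_{j=1}^{m} T_{\pi(j)} - \sum_{j=1}^{m} T_{j} \;=\; 0,
\]
because $\pi$ permutes $\widehat{m}$. Since $k, k' \in \widehat{n}$ are ordinary integers and $m \geq 1$, we conclude $k = k'$, the desired contradiction. The only mildly delicate point is the extraction of $\pi$ from the equality $w_{k} = w_{k'}$, which is the uniqueness of monomial expansions in $K[Y]$; the summation identity is the heart of the argument and is much cleaner here than in the $\mathbb{Z}_{p}$-analogue (Lemma \ref{felipe17}), which is precisely why no hypothesis of the form $m \notin C_{\mathbb{Z}}(M_{n}(K))$ is needed.
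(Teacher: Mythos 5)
Your proof is correct and follows the same plan as the paper's own proof, which is only a sketch: reduce via Lemma \ref{felipe0} to the claim that two distinct diagonal entries $w_{k}$, $w_{k'}$ (with $k\neq k'$ in $L$) cannot coincide, and argue by contradiction. Where the paper simply writes ``by contradiction, the result follows,'' you supply the decisive step correctly --- equality of the monomials $w_{k}=w_{k'}$ in the polynomial ring $K[Y]$ forces a matching of their multisets of generators, yielding a permutation $\pi$ with $T_{\pi(j)}-T_{j}=k'-k$ for all $j$, and summing over $j$ gives $m(k'-k)=0$, hence $k=k'$; your closing remark explaining why the $\mathbb{Z}_{p}$ analogue (Lemma \ref{felipe17}) needs the extra non-centrality hypothesis while this $\mathbb{Z}$-graded statement does not is also accurate.
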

\begin{proof}
(Sketches) If only one position in $A_{i_{1}}.\cdots.A_{i_{m}}$ has
a non-zero entry, the proof is obvious. Suppose that there exist, at
least, two positions $(k_{1},k_{1}),(k_{2},k_{2}) \in
\widehat{n}\times \widehat{n}$ such that:
\begin{center}
$e_{1k_{1}}(A_{i_{1}}\cdots A_{i_{m}})e_{k_{1}1},
e_{1k_{2}}(A_{i_{1}}\cdots A_{i_{m}})e_{k_{2}1} \neq 0$.
\end{center}
Our aim is to prove that:
\begin{center}
$e_{1k_{1}}(A_{i_{1}}\cdots A_{i_{m}})e_{k_{1}1} \neq
e_{1k_{2}}(A_{i_{1}}\cdots A_{i_{m}})e_{k_{2}1}$.
\end{center}
Suppose by contradiction that:
\begin{center}
$e_{1k_{1}}(A_{i_{1}}\cdots A_{i_{m}})e_{k_{1}1} =
e_{1k_{2}}(A_{i_{1}}\cdots A_{i_{m}})e_{k_{2}1}$.
\end{center}
By contradiction, the result follows.
\end{proof}

The main steps of the proof of the Lemmas \ref{felipe14.5}, Lemma
\ref{felipe14.6} and Theorem \ref{felipe23} hold also for this
grading and we obtain the following result.

\begin{theorem}
Let $K$ be an infinite integral domain. Then the $\mathbb{Z}$-graded
central polynomials of $M_{n}(K)$ follow from:
\begin{center}
$z_{1}(x_{1}x_{2} - x_{2}x_{1})z_{2} \ \ \mbox{when} \ \
\alpha(x_{1}) =
\alpha(x_{2}) = 0 \ \ (12)$; \\
$z_{1}(x_{1}x_{2}x_{3} - x_{3}x_{2}x_{1})z_{2} \ \ \mbox{when} \ \
\alpha(x_{1}) =
-\alpha(x_{2}) = \alpha(x_{3}) \neq 0 \ \ (13)$; \\
$z_{1}x_{1}z_{2} \ \ \mbox{when} \ \ |\alpha(x_{1})| \geq n \ \ (14)$; \\
$\sum_{\sigma \in H_{n}} x_{\sigma(1)}\cdots x_{\sigma(n)}$, where
$(\overline{\alpha(x_{1})},\cdots,\overline{\alpha(x_{n})})$ is a
complete sequence of $\mathbb{Z}_{n}$ and $|\alpha(x_{i})| < n \ \
(15)$.
\end{center}
The monomials $z_{1},z_{2} \in \bigcup_{i \in \mathbb{Z}} X_{i}$.
\end{theorem}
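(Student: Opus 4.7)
The plan is to follow, essentially verbatim, the script of the proof of Theorem \ref{felipe23}, substituting Lemma \ref{felipe20} for Lemma \ref{felipe17} and the $\mathbb{Z}$-analogues of Lemmas \ref{felipe14.5} and \ref{felipe14.6} (whose statements and proofs go through with only cosmetic changes, as the author notes, because the diagonal-visitation argument via matrix units in Lemma \ref{felipe14.4} depends only on the reduction of the degrees modulo $n$). The containment $\langle (12),(13),(14),(15)\rangle^{T_{\mathbb{Z}}}\subset C_{\mathbb{Z}}(M_n(K))$ is immediate: (12) and (13) lie in $T_{\mathbb{Z}}(M_n(K))$ by Lemma \ref{luis1}; (14) lies in $T_{\mathbb{Z}}(M_n(K))$ because the corresponding homogeneous component is zero; and (15) is central by the $\mathbb{Z}$-analogue of Lemma \ref{felipe14.5} (noting that the hypothesis $|\alpha(x_i)|<n$ is exactly what guarantees that each $x_i$ has a non-trivial evaluation, so that the complete-sequence argument of Lemma \ref{felipe14.4} applies after projecting to $\mathbb{Z}_n$).

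For the reverse inclusion, take $f\in C_{\mathbb{Z}}(M_n(K))-T_{\mathbb{Z}}(M_n(K))$ and use Lemma \ref{felipe12} together with Corollary \ref{felipe11} to assume $f=\sum_{i=1}^l \lambda_i m_i$ is multi-homogeneous of degree $0$. Modulo (14) every variable may be assumed to have $|\alpha(x_i)|<n$; modulo (12) and (13) and using Lemma \ref{felipe22}'s $\mathbb{Z}$-counterpart one may discard any $m_i$ that is itself a central monomial, so every remaining $m_i$ is non-central, not an identity, and satisfies the conclusion of Lemma \ref{felipe20}: the diagonal entries of $m_i(A_1,\ldots,A_q)$ are non-zero and pairwise distinct. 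Since $f(A_1,\ldots,A_q)$ is a scalar matrix, its $n$ diagonal entries must all coincide; so after reordering there exist indices $i_1<\cdots <i_n$ with $\lambda_{i_1}=\cdots=\lambda_{i_n}\neq 0$ and $e_{11}m_{i_1}(A_1,\ldots,A_q)e_{11}=e_{jj}m_{i_j}(A_1,\ldots,A_q)e_{jj}$ for all $j\in\widehat{n}$.

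Applying the $\mathbb{Z}$-version of Lemma \ref{felipe14.6} to $m_{i_1}$ produces a factorization $m_{i_1}=r_1\cdots r_n$ whose sequence of $\mathbb{Z}$-degrees projects to a complete sequence in $\mathbb{Z}_n$; then, for each $j\in\widehat{n}$, there is a unique $\sigma_j\in H_n$ such that $m_{i_j}$ and $r_{\sigma_j(1)}\cdots r_{\sigma_j(n)}$ share the $(1,1)$-entry after evaluation, whence Lemma \ref{felipe5.5} yields $m_{i_j}\equiv r_{\sigma_j(1)}\cdots r_{\sigma_j(n)}\ \ mod\ \ T_{\mathbb{Z}}(M_n(K))$. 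Consequently $\lambda_{i_1}(m_{i_1}+\cdots+m_{i_n})$ equals, modulo $T_{\mathbb{Z}}(M_n(K))$, a polynomial of type (15); subtracting this from $f$ strictly decreases the number of summands, and induction on $l$ finishes the proof.

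The main obstacle I expect is the verification of the $\mathbb{Z}$-version of Lemma \ref{felipe14.6}, where one must lift the complete-sequence argument from $\mathbb{Z}_n$ to $\mathbb{Z}$: one needs to confirm that when the $\mathbb{Z}$-degrees of the chosen subsequence of variables are reduced modulo $n$, the indices of rows visited in the corresponding product of matrix units still exhaust $\widehat{n}$ and return to the starting row, so that the factorization $m=m_1\cdots m_n$ and the projected complete-sequence property both hold simultaneously. Once this lemma is in place, the rest of the argument is routine mimicry of Theorem \ref{felipe23}, and in particular no analogue of the central monomials in Definition \ref{felipe16} (item (10) in Theorem \ref{felipe23}) is needed, because in the $\mathbb{Z}$-grading the relevant powers $x^n$ with $\alpha(x)\neq 0$, $|\alpha(x)|<n$, are nilpotent, hence already annihilated in the $T_{\mathbb{Z}}$-ideal generated by (12)--(14).
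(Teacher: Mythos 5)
Your proposal follows essentially the same route as the paper, which itself proves this theorem only by remarking that the main steps of Lemmas \ref{felipe14.5}, \ref{felipe14.6} and Theorem \ref{felipe23} carry over to the $\mathbb{Z}$-grading, with Lemma \ref{felipe20} playing the role of Lemma \ref{felipe17}. Your additional observations (that homogeneous elements of non-zero $\mathbb{Z}$-degree are nilpotent, so no analogue of the central monomials $(10)$ is needed, and that the complete-sequence argument must be checked after projecting degrees to $\mathbb{Z}_{n}$) are correct and in fact supply details the paper leaves implicit.
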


\section{Acknowledgments}

My sincerely thanks go to Alexei N. Krasilnikov, for proposing the
problem, and my entire doctorate board (Alexei Krasilnikov, Plamen
Koshlukov, Viviane Ribeiro Tomaz da Silva, Victor Petrogradsky and
José Antônio Oliveira Freitas) for their useful advice, comments,
remarks and suggestions.

The author would like to thank the reviewers of Rendiconti del
Circolo Matematico di Palermo for their comments that help improve
the manuscript.


\end{document}